
\documentclass{birkjour}
\PassOptionsToPackage{linktocpage}{hyperref}

\usepackage{schur-com}
\usepackage{amssymb}
\usepackage{float}
\usepackage{mathscinet}
\usepackage{hyperref}
\usepackage{pmat}
\usepackage{tikz}
\usetikzlibrary{decorations.markings,arrows}
%
%
%
\newtheorem{thm}{Theorem}[section]
\newtheorem{cor}[thm]{Corollary}

\theoremstyle{definition}
\newtheorem{defn}[thm]{Definition}
\theoremstyle{remark}

\numberwithin{equation}{section}

\sloppy

\begin{document}

%
%
%
%
%
%
%
%
%

\title[Characteristic Function, Schur Problem and Darlington Synthesis]{Characteristic Function, Schur Interpolation Problem and Darlington Synthesis}

\author[Boiko]{Sergey S. Boiko}

\address{Department of Mathematics and Computer Science\\
Karazin National University\\
Kharkov\\
Ukraine}

\email{ss\_boiko@ukr.net.ch}

\author[Dubovoy]{Vladimir K. Dubovoy}
\address{Department of Mathematics and Computer Science\\
    Karazin National University\\
    Kharkov\\
    Ukraine\\
    and\\
    Max Planck Institute for Human Cognitive and Brain Sciences\\
    Stephanstrasse~1A\\
    04103~Leipzig\\
    Germany\\
    and\\
    Max Planck Institute for Mathematics in the Sciences\\
    Inselstrasse~22\\
    04103~Leipzig\\
    Germany}
\email{dubovoy.v.k@gmail.com}

\author[Fritzsche]{Bernd Fritzsche}
\address{Universit\"at Leipzig\\
    Fakult\"at f\"ur Mathematik und Informatik\\
    PF~10~09~20\\
    D-04009~Leipzig\\
    Germany}
\email{fritzsche@math.uni-leipzig.de}

\author[Kirstein]{Bernd Kirstein}
\address{Universit\"at Leipzig\\
    Fakult\"at f\"ur Mathematik und Informatik\\
    PF~10~09~20\\
    D-04009~Leipzig\\
    Germany}
\email{kirstein@math.uni-leipzig.de}

\author[M\"adler]{Conrad M\"adler}
\address{Universit\"at Leipzig\\
    Fakult\"at f\"ur Mathematik und Informatik\\
    PF~10~09~20\\
    D-04009~Leipzig\\
    Germany}
\email{maedler@math.uni-leipzig.de}

\author[M\"uller]{Karsten M\"uller}
\address{Max Planck Institute for Human Cognitive and Brain Sciences\\
    Stephanstrasse~1A\\
    04103~Leipzig\\
    Germany}
\email{karstenm@cbs.mpg.de}

\subjclass{Primary 30J10; Secondary 47A48, 47A40, 30E05, 47A57}

\keywords{Schur function, Schur interpolation problem, characteristic function, contraction, unitary colligation, open system, scattering suboperator, unitary coupling, pseudocontinuation, Darlington synthesis}

\date{January 1, 2004}
\dedicatory{Dedicated to M.~S.~Liv\v{s}ic and V.~P.~Potapov}

\begin{abstract}
In this paper we would like to show the interrelation between the different mathematical theories concerning the Schur interpolation problem, contractions in Hilbert spaces, pseudocontinuation and Darlington synthesis.
The main objects of this article are contractive functions holomorphic in the unit disc (Schur functions).
Here they are considered, on the one hand, as characteristic functions of contractions in Hilbert spaces and, on the other hand, as transfer functions of open systems.
\end{abstract}

\maketitle
\tableofcontents
\section{Introduction}

\subsection{Basic notations}
Let $\cF$ and $\cG$ be Hilbert spaces (all Hilbert spaces considered in this paper  are assumed to be complex and separable).
By $[\cF,\cG]$ we denote the Banach space of bounded linear operators defined on $\cF$ and taking values in $\cG$.
If $\cG =\cF$, we use the notation $[\cF] \defeq  [\cF, \cF]$.
If $\cH$ is a Hilbert space and $\cL$ is a subspace of $\cH$, then by $P_\cL$ we denote the orthogonal projection in $\cH$ onto $\cL$.

Let $\D  \defeq  \setaca{ \zeta\in \C }{\lvert\zeta\rvert <1}$ and  $\T\defeq\setaca{t\in \C}{\lvert t\rvert=1}$.
Denote by $\bv\lambda$ the Lebesgue--Borel measure on $\T$ normalized by $\bv\lambda\rk{\T}=2\pi$.

We recall the definitions of some spaces of vector and operator-valued functions that will be used in what follows.
A detailed treatment of this subject can be found, \teg{}, in \cite{RR,SN}.

By $\LinfT{\cF}{\cG}$ we denote the Banach space of measurable (not important in what sense, weak or strong, in view of the
separability of the spaces $\cF$ and $\cG$) $[\cF, \cG]$\nobreakdash-valued functions $\vartheta$ defined on $\T$ and such that
\[
\lVert\vartheta\rVert_{\LinfT{\cF}{\cG}}
\defeq\bv\lambda-\esssup_{t\in\T}\lVert\vartheta(t)\rVert_{[\cF, \cG]}
<\infty.
\]
Functions belonging to the closed unit ball
\[
CM(\T;[\cF,\cG])
\defeq\setacab{\vartheta\in\LinfT{\cF}{\cG}}{\lVert\vartheta\rVert_{\LinfT{\cF}{\cG}}\le 1}
\]
of the space $\LinfT{\cF}{\cG}$ are called \emph{$[\cF, \cG]$\nobreakdash-valued contractive measurable functions on $\T$}.

If $\HinfD{\cF}{\cG})$ is the Hardy space of $[\cF, \cG]$\nobreakdash-valued bounded holomorphic functions $\theta$ defined on $\D$ and such that
\[
\lVert\theta\rVert_{\HinfD{\cF}{\cG}}
\defeq\sup_{\zeta\in\D}\lVert\theta(\zeta)\rVert_{[\cF, \cG]}
<\infty,
\]
then by $\LinfpT{\cF}{\cG}$ we denote the subspace of $\LinfT{\cF}{\cG}$ consisting of $\bv\lambda$\nobreakdash-a.\,e.\ strong boundary value functions $\bv\theta$ for $\theta\in\HinfD{\cF}{\cG}$.
Moreover, the equality
\[
\lVert\bv\theta\rVert_{\LinfpT{\cF}{\cG}}
=\lVert\theta\rVert_{\HinfD{\cF}{\cG}}
\]
makes it possible to identify the spaces $\HinfD{\cF}{\cG}$ and $\LinfpT{\cF}{\cG}$  up to the obvious isomorphism.
Functions belonging to the closed unit ball
\[
\SFD{\cF}{\cG}
\defeq\setacab{\theta\in\HinfD{\cF}{\cG}}{\lVert\theta\rVert_{\HinfD{\cF}{\cG}}\le1}
\]
of the space $\HinfD{\cF}{\cG}$ are usually called \emph{$[\cF, \cG]$\nobreakdash-valued Schur functions on $\D$}.

Let $\cN $ be a Hilbert space.
By $\LzwoT{\cN}$ we will denote the Hilbert space of measurable (no matter in the weak or strong sense) $\cN$\nobreakdash-valued functions $h$ defined on $\T$ and such that
\[
\lVert h\rVert^2_{\LzwoT{\cN}}
\defeq\frac{1}{2\pi}\int_\T\lVert h(t)\Vert^2_\cN\bv\lambda\rk{\dif t}
<\infty,
\]
while the inner product in $\LzwoT{\cN}$ is defined by
\[
\langle h_1,h_2\rangle_{\LzwoT{\cN}}
\defeq\frac{1}{2\pi}\int_\T\langle h_1(t),h_2(t)\rangle_\cN\bv\lambda\rk{\dif t}.
\]
It is known that a function $h$ belongs to $\LzwoT{\cN}$ if and only if it admits the representation
\begin{align*}
    h(t)&=\sum_{k=-\infty}^\infty t^kh_k,&
    h_k&\in\cN\quad
    (k=0,\pm1,\pm2,\dotsc),\quad
    t\in\T,
\end{align*}
where the series convergence is understood as the convergence in the space $\LzwoT{\cN}$.
Moreover, the condition
\[
\sum_{k=-\infty}^\infty\lVert h_k \rVert^2_\cN
<\infty
\]
is satisfied.

By $\LzwopT{\cN}$ we denote the important subspace of $\LzwoT{\cN}$ that consists of functions $h\in\LzwoT{\cN}$ admitting the representation
\begin{align*}
    h(t)&=\sum_{k=0}^{\infty}t^kh_k,&
    h_k&\in\cN\quad(k=0,1,2,\dotsc),\quad
    t\in\T.
\end{align*}
Such functions can be considered as $\bv\lambda$\nobreakdash-a.\,e.\ strong boundary value functions of functions from the Hardy class $\HzwoD{\cN}$ on the unit disk
$\D$.
The space $\HzwoD{\cN}$ is formed by $\cN$\nobreakdash-valued holomorphic functions $h$ on $\D$ with Taylor series representation
\begin{align*}
    h(\zeta)&=\sum_{k=0}^\infty\zeta^kh_k,&
    h_k&\in\cN\quad
    (k=0,1,2,\dotsc),\quad
    \zeta\in\D,
\end{align*}
and such that
\[
\lVert h\rVert^2_{\HzwoD{\cN}}
\defeq\sup_{r<1}\left\{\frac{1}{2\pi}\int_\T\lVert h(rt)\rVert^2_\cN\bv\lambda\rk{\dif t}\right\}
<\infty.
\]
Furthermore, $\lVert h\rVert_{\HzwoD{\cN}}=\lVert\bv h\Vert_{\LzwopT{\cN}}$.

Any function $\vartheta$ from the space $\LinfT{\cF}{\cG}$ admits the following representation
\begin{align*}
    \vartheta\rk{t}&=\sum_{k=-\infty}^\infty t^k\vartheta_k,&
    \vartheta_k\in[\cF,\cG]\quad
    (k=0,\pm1,\pm2,\dotsc),\quad
    t\in\T,
\end{align*}
where for any $f\in\cF$ the series $\sum_{k=-\infty}^\infty t^k\vartheta_k f$ converges in the norm of the space $\LzwoT{\cG}$ and it is the Fourier expansion of the function $\vartheta f$.
Accordingly, any function $\bv\theta$ from the space $\LinfpT{\cF}{\cG}$ admits the representation
\begin{align*}
    \bv\theta\rk{t}&=\sum_{k=0}^\infty t^k\theta_k,\quad
    \theta_k\in[\cF,\cG]\quad
    (k=0,1,2,\dotsc),\quad
    t\in\T.
\end{align*}
Moreover, the corresponding function $\theta\in\HinfD{\cF}{\cG}$ admits the representation
\begin{align*}
    \theta(\zeta)&=\sum_{k=0}^\infty\zeta^k\theta_k,&
    \zeta&\in\D.
\end{align*}

In what follows (except for Subsection 4.8), we consider spaces of measurable vector- and operator-valued functions only on $\T$ and spaces of such holomorphic functions only on $\D$.
Throughout this paper (except for Subsection 4.8), for simplicity, we omit the symbols $\T$ and $\D$ in the notations of these spaces.
For example, further the spaces $L^\infty\rk{\T;[\cF,\cG]}$ and $S(\D;[\cF,\cG])$ will be denoted by $L^\infty[\cF,\cG]$ and $S[\cF,\cG]$, respectively.
In the case $\cF=\cG=\C$, we will use the denotations $\HinfDs\defeq H^\infty\rk{\D;\ek{\C}}$, $\SFDs\defeq S\rk{\D;\ek{\C}}$, $\LinfTs\defeq L^\infty\rk{\T;\ek{\C}}$, $\CMTs\defeq CM\rk{\T;\ek{\C}}$.


\subsection{Origins of problems and background}
When considering operator-valued Schur functions on $\D$, on the one hand, it is important that each such function is \emph{the characteristic function of some contraction} in a Hilbert space and, hence, it is \emph{the transfer function of some open scattering system} (\cite{SN,B,MR2013544,BD4}).
\rsec{sec2-0927} is devoted to the approach to considering Schur functions from this point of view.

On the other hand, the name of this class of functions comes from the well-known interpolation problem formulated and solved by I.~Schur in 1917 for the scalar case \cite{Schur} (see \rsubsec{subsec3.1-0927}).
An important role in solving the Schur problem is played by \emph{the step-by-step algorithm} introduced by I.~Schur, later named after him.

In 1974 V.~P.~Potapov proposed a new approach to solving interpolation problems of analysis by \emph{the method of $J$\nobreakdash-expanding matrix functions} \cite{MR409825}.
When stepwise solving interpolation problems like the Schur one, a sequence of nested operator balls arises.
V.~P.~Potapov proposed calling them \emph{Weyl balls} by analogy with disks in the well-known papers of H.~Weyl \cite{MR1511560,MR1503221} (the so-called cases of \emph{the limit disks} and \emph{the limit point}).
The method proposed by V.~P.~Potapov and I.~V.~Kovalishina makes it possible to control the asymptotic behavior of these balls and obtain the limit ball with predetermined characteristics \cite{MR647177}.

Consider the Schur problem generated by the first $n+1$ coefficients $\set{c_k}_{k=0}^n$ of the Taylor expansion
\[
\theta\rk{\zeta}=c_0+c_1\zeta+\dotsb+c_n\zeta^n+\dotsb,\qquad\zeta\in\D,
\]
for a function $\theta\in\SFFG$, $\dim\cF<\infty$, $\dim\cG<\infty$ (see \rsubsec{subsec3.1-0927}).
The set $S\ek{\cF,\cG;\set{c_k}_{k=0}^n}$ of solutions of this problem consists of all Schur functions whose first $n+1$ Taylor coefficients coincide with $\set{c_k}_{k=0}^n$.
At each point $\zeta_0\in\D$ the set
\[
K_n\rk{\zeta_0}\defeq\setacab{\omega\rk{\zeta_0}}{\omega\in S\ekb{\cF,\cG;\set{c_k}_{k=0}^n}}
\]
is an operator ball, called the Weyl ball, of the form
\beql{E1.3-0927}
\setacab{M_n\rk{\zeta_0}+\rho_{l,n}^{1/2}\rk{\zeta_0}u\rho_{r,n}^{1/2}\rk{\zeta_0}}{u\in\FG,u^\ad u\leq\IF}
\eeq
with \emph{the center} $M_n\rk{\zeta_0}$, \emph{the left semi-radius} $\rho_{l,n}\rk{\zeta_0}\in\ek{\cG}$, $\rho_{l,n}\rk{\zeta_0}\geq0$, and \emph{the right semi-radius} $\rho_{r,n}\rk{\zeta_0}\in\ek{\cF}$, $\rho_{r,n}\rk{\zeta_0}\geq0$ (see \rsubsec{subsec3.5-0927}; \rthm{T3.13}).

As noted above, the balls are nested, \tie,
\[
K_0\rk{\zeta_0}
\supset K_1\rk{\zeta_0}
\supset\dotsb
\supset K_n\rk{\zeta_0}
\supset K_{n+1}\rk{\zeta_0}
\supset\dotsb
\]
while
\[
\rho_{l,n}\rk{\zeta_0}
\geq\rho_{l,n+1}\rk{\zeta_0},
\quad
\rho_{r,n}\rk{\zeta_0}
\geq\rho_{r,n+1}\rk{\zeta_0}
\qquad(n=0,1,2,\dotsc).
\]
In the interpolation Schur problem, always
\beql{E1.4-0927}
\lim_{n\to\infty}\rho_{l,n}\rk{\zeta_0}=0
\eeq
holds true.
Letting $n\to\infty$, we obtain \emph{the limit Weyl operator ball}
\[
K_\infty\rk{\zeta_0}
\defeq\bigcap_{n\geq0}K_n\rk{\zeta_0},
\]
admitting the representation
\[
K_\infty\rk{\zeta_0}
=\setacab{M_\infty\rk{\zeta_0}+\rho_{l,\infty}^{1/2}\rk{\zeta_0}u\rho_{r,\infty}^{1/2}\rk{\zeta_0}}{u\in\FG,u^\ad u\leq\IF},
\]
where
\begin{align*}
    M_\infty\rk{\zeta_0}&\defeq\lim_{n\to\infty}M_n\rk{\zeta_0},\\
    \rho_{l,\infty}\rk{\zeta_0}&\defeq\lim_{n\to\infty}\rho_{l,n}\rk{\zeta_0},&
    \rho_{r,\infty}\rk{\zeta_0}&\defeq\lim_{n\to\infty}\rho_{r,n}\rk{\zeta_0}.
\end{align*}

The limit Weyl ball $K_\infty\rk{\zeta_0}$ corresponds to the ``infinite'' interpolation Schur problem generated by the sequence $\set{c_k}_{k=0}^\infty$ of all coefficients of the function $\theta$.
From \eqref{E1.4-0927} it follows that $\rho_{l,\infty}\rk{\zeta_0}=0$.
Thus, the sequence of balls $\set{K_n\rk{\zeta_0}}_{k=0}^\infty$ converges to the singleton $\set{M_\infty\rk{\zeta_0}}$, and, hence, the solution of this problem is unique.
Obviously, this unique solution is the function $\theta$, \tie,
\[
K_\infty\rk{\zeta_0}=\setb{M_\infty\rk{\zeta_0}}=\setb{\theta\rk{\zeta_0}},\qquad\zeta_0\in\D.
\]

Note that it follows from the fundamental Orlov theorem \cite{MR0425671} that $\rank\rho_{r,\infty}\rk{\zeta}$ does not depend on $\zeta\in\D$.

Let $T\in\ek{\cH}$ be a contraction for which the function $\theta\in\SFFG$ is characteristic.
In 1978 V.~P.~Potapov posed the question:
\begin{quote}
    ``What properties of the contraction $T$ are characterized by the constant $\rank\rho_{r,\infty}\rk{\zeta}$, $\zeta\in\D$?''
\end{quote}
The series of papers \cite{D82} is devoted to the answer to this question.
In this survey, this range of issues is expounded in \rsec{sec3-0927}, while solving the Schur problem by the method of $J$\nobreakdash-expanding matrix functions is presented in \rsubsecsts{subsec3.1-0927}{subsec3.5-0927}.

To answer V.~P.~Potapov's question, consider the \emph{normalized} left semi-radius (\rdefn{D3.14})
\[
\tilde{\rho}_{l,n}\rk{\zeta}\defeq\frac{1}{\abs{\zeta}^{2n+2}}\rho_{l,n}\rk{\zeta},\qquad\zeta\in\D\setminus\set{0}.
\]
Since the considered Schur problem is generated by the sequence $\set{c_k}_{k=0}^\infty$ of the Taylor coefficients of the function $\theta\in\SFFG$, we will add the symbol $\theta$ to the notations of the semi-radii of the Weyl balls.
The introduction of the normalized left semi-radius is motivated by the following equality (\rthm{T3.15})
\beql{E1.7-0927}
\tilde{\rho}_{l,n}\rk{\zeta,\theta}=\rho_{r,n}\rk{\zeta,\theta^\sch},\qquad\zeta\in\D\setminus\set{0},
\eeq
where $\theta^\sch\rk{\zeta}\defeq\te^\ad\rk{\ko\ze}$, $\ze\in\D$.
It makes possible to define $\tilde{\rho}_{l,n}\rk{\zeta,\theta}$ also at the point $\ze=0$ by setting $\tilde{\rho}_{l,n}\rk{0,\theta}\defeq\rho_{r,n}\rk{0,\theta^\sch}$.
From \eqref{E1.7-0927} the existence of the limit
\[
    \tilde{\rho}_{l,\infty}\rk{\zeta,\theta}
    \defeq\lim_{n\to\infty}\tilde{\rho}_{l,n}\rk{\zeta,\theta}
    =\rho_{r,\infty}\rk{\ze,\theta^\sch},\qquad\ze\in\D,
\]
follows.
From here and the Orlov theorem we obtain that $\rank\tilde{\rho}_{l,\infty}\rk{\zeta,\theta}$ does not depend on $\ze\in\D$.
Now, in these refined notations, the Weyl ball \eqref{E1.3-0927} can be written in the form
\[
K_n\rk{\ze_0}
=\setacab{M_n\rk{\zeta_0}+\abs{\ze_0}^{2n+2}\tilde{\rho}_{l,n}^{1/2}\rk{\zeta_0,\te}u\rho_{r,n}^{1/2}\rk{\zeta_0,\te}}{u\in\FG,u^\ad u\leq\IF}.
\]
Here it is seen that the sequence of Weyl balls shrinks to a singleton as $n\to\infty$ on account of the factor $\abs{\ze_0}^{2n+2}$.
Along with this the normalized left semi-radius and the right one tend to $\tilde{\rho}_{l,\infty}\rk{\zeta_0,\theta}$ and $\rho_{r,\infty}\rk{\zeta_0,\te}$, respectively.

The answer to V.~P.~Potapov's question is given by \rthm{T3.24}:

\textit{The values of $\rank\tilde{\rho}_{l,\infty}\rk{\ze,\theta}$ and $\rank\rho_{r,\infty}\rk{\ze,\te}$ coincide respectively with the multiplicities of the largest unilateral shift and coshift contained in the contraction $T$.}

More profound is the assertion about the factorizations of the limit semi-radii (\rthm{T3.22}):

\textit{The limit semi-radii admit the factorizations:
\begin{align*}
    \rho_{r,\infty}\rk{\ze,\te}&=\te_r^\ad\rk{\ze}\te_r\rk{\ze},\qquad\ze\in\D;\\
    \tilde{\rho}_{l,\infty}\rk{\ze,\te}&=\te_l\rk{\ze}\te_l^\ad\rk{\ze},\qquad\ze\in\D,
\end{align*}
where the Schur functions $\te_r\in\SF{\cF}{\cL_0}$ and $\te_l\in\SF{\tcL_0}{\cG}$ have forms \eqref{E3.76} and \eqref{E3.77}, respectively.}

The functions $\te_r$ and $\te_l$ (\rdefn{D3.23}) are called \emph{the right and left defect functions} of the Schur function $\te$, respectively.

Note that representations \eqref{E3.76} and \eqref{E3.77} allow us to extend the concept of the defect functions of a Schur function $\te\in\SFFG$ to the case of infinite-dimensional spaces $\cF$ and $\cG$.

\rsec{sec4-1003} outlines the role of the defect functions $\ter$ and $\tel$ of a function $\te\in\SFFG$ in solving the problem of loss decrease (of energy) in an open scattering system for which $\te$ is the transfer function. 
Following to the ideas of D.~Z.~Arov (\cite{A-1,MR2013544}), we proceed from the fact that this physical problem is directly related to the study of \emph{up-leftward Schur extensions} of the form
\beql{E1.8-1003}
\Xi\defeq\Mat{\te_{11}&\te_{12}\\\te_{21}&\te_{22}}\in\SF{\cF'\oplus\cF}{\cG'\oplus\cG},\qquad\te_{22}\defeq\te,
\eeq
for the function $\te$.
Their special cases
\beql{E1.9-1003}
\Omega\defeq\Mat{\te_{12}\\\te}\in\SF{\cF}{\cG'\oplus\cG},\qquad\Upsilon\defeq\mat{\te_{21},\te}\in\SF{\cF'\oplus\cF}{\cG},
\eeq
where respectively $\cF'=\set{0}$ and $\cG'=\set{0}$, are called \emph{upward} and \emph{leftward Schur extensions of $\te$}, respectively.

The main motivations are the following definitions being refinements of those in \cite{MR2013544}.
An open scattering system with the transfer function $\te\in\SFFG$ is called \emph{lossless at the input} (resp.\ \emph{at the output}) if $\te$ is a $*$\nobreakdash-inner (resp.\ inner) function.
It is termed \emph{lossless} if $\te$ is a two-sided inner function (see \rsubsec{subsec4.7-1003} for details).

The problem of describing unidirectional Schur extensions of form \eqref{E1.9-1003} is actually solved
in \rsubsec{subsec4.1} (see \rthm{T4.6}, \rdefn{D1454}, \rthm{T4.8}, and \cite[Notes to \csec{V.4}]{SN}).
\rthm{T4.8} characterizes the function $\ter$ (\tresp\ $\tel$) as the largest minorant (\tresp\ $*$-minorant) for the function
\begin{align*}
    \Pi\rk{t}&\defeq\rk{\IF-\bv\te^\ad\rk{t}\bv\te\rk{t}}^{1/2}\in CM\ek{\cF}\\
    \text{(\tresp\ }\Sigma\rk{t}&\defeq\rk{\IG-\bv\te\rk{t}\bv\te^\ad\rk{t}}^{1/2}\in CM\ek{\cG}\text{)},&t&\in\T.
\end{align*}
Consequently (\rcor{C4.9}), the right (\tresp\ left) defect function $\ter$ (\tresp\ $\tel$) is outer (\tresp\ $*$-outer).
In terms of up-leftward Schur extensions, \rthm{T4.8} can be reformulated in the following way:

\textit{``A function $\Omega\defeq\col\rk{\te_{12},\te}\in H^\infty\ek{\cF,\cG'\oplus\cG}$ 
    \textnormal{(}resp.\ $\Upsilon\defeq\mat{\te_{21},\te}\in H^\infty\ek{\cF'\oplus\cF,\cG}$\textnormal{)} 
is an upward \textnormal{(}resp.\ leftward\textnormal{)} Schur extension of a function $\te\in\SFFG$ if and only if the function $\te_{12}$ \textnormal{(}resp.\ $\te_{21}$\textnormal{)} admits the representation
\[
\te_{12}=\mu\ter\qquad\rk{\text{resp.\ }\te_{21}=\tel\nu},
\]
where $\mu\in\SF{\cL_0}{\cG'}$ \textnormal{(}resp.\ $\nu\in\SF{\cF'}{\tcL_0}$\textnormal{)}.''}

Let $\te\in\SFFG$ and $\vphi\defeq\ter\in\SF{\cF}{\cG_0}$, $\cG_0\defeq\cL_0$ (resp.\ $\psi\defeq\tel\in\SF{\cF_0}{\cG}$, $\cF_0\defeq\tcL_0$).
Note that, as is easy to see, on the Schur extension
\beql{E1.10-1003}
\Omega_0\defeq\Mat{\vphi\\\te}\in\SF{\cF}{\cG_0\oplus\cG}
\qquad\rk{\text{resp.\ }\Upsilon_0\defeq\mat{\psi,\te}\in\SF{\cF_0\oplus\cF}{\cG}}
\eeq
the least deviation from the inner (resp.\  $*$\nobreakdash-inner) case among all upward (resp.\ leftward) Schur extensions of form \eqref{E1.9-1003} is attained (as well as in the case when  $\te_{12}=\mu\vphi$ (resp.\ $\te_{21}=\psi\nu$), where $\mu$ (resp.\ $\nu$) is an inner (resp.\ $*$\nobreakdash-inner) function).
Thus, we can say that in the open system with the transfer function $\Omega_0$ (resp.\ $\Upsilon_0$) the losses at the output (resp.\ at the input) are the smallest among all such systems with the transfer functions $\Omega$ (resp.\ $\Upsilon$) of form \eqref{E1.9-1003} (including the original system with the transfer function $\te$).
A detailed scheme of restructuring from the original open system to a system with the transfer function $\Omega$ (resp.\ $\Upsilon$) of form \eqref{E1.9-1003} is discussed in \rsubsec{subsec4.7-1003}.

With bilateral Schur extensions of form \eqref{E1.8-1003}, the problem of their description is more complicated because for a given pair of upward and leftward Schur extensions $\Omega$ and $\Upsilon$ of form \eqref{E1.9-1003} may not exist a Schur function $\te_{11}\in\SF{\cF'}{\cG'}$ which completes an up-leftward Schur extension $\Xi$ of form \eqref{E1.8-1003} (\cite{MR2013544,BD20}).
At the same time, passing to the boundary value functions $\bv{\te_{22}}$ ($=\bv\te$), $\bv{\te_{12}}$, $\bv{\te_{21}}$ of the functions $\te_{22}$, $\te_{12}$, $\te_{21}$ we can now consider the problem of completion to a block matrix already in the class of contractive measurable operator-valued functions on $\T$.

By analogy with \eqref{E1.8-1003}--\eqref{E1.9-1003} we consider up-leftward extensions of the form
\beql{E1.11-1003}
\Xi\defeq\Mat{\vte_{11}&\vte_{12}\\\vte_{21}&\vte_{22}}\in  CM\ek{\cF'\oplus\cF,\cG'\oplus\cG}, \qquad \vte_{22}\defeq\vte,
\eeq
for a function $\vte\in  CM\ek{\cF,\cG}$, as well as their special cases, upward and leftward ones of the form
\beql{E1.12-1003}
\Omega\defeq\Mat{\vte_{12}\\\vte}\in  CM\ek{\cF,\cG'\oplus\cG}, \qquad \Upsilon\defeq\mat{\vte_{21},\vte}\in  CM\ek{\cF'\oplus\cF,\cG},
\eeq
respectively.
The need to study them leads us to use a more general \emph{theory of unitary couplings} (see \rsubsec{subsec4.2-1003}).
It originates from the paper \cite{A-A} of V.~M~Adamyan ad D.~Z.~Arov and is a generalization of the theory of unitary colligation (see \rsubsecss{Sec1.1}{subsec4.5-1003}) which was effectively used in the study of operator-valued Schur functions.

A unitary coupling (or simply coupling) $\sg$ (\rdefn{D4.9}) can be viewed as a more general mathematical model of an open discrete time scattering system with an input space $\cG$, an output space $\cF$, and a connecting space $\cH$ on which the connecting unitary operator $U\in\ek{\cH}$ acts.
An exhaustive characteristic of a coupling $\sg$ is a contractive measurable operator-valued function $\vte_\sg\in  CM\ek{\cF,\cG}$ called \emph{the scattering suboperator of the coupling} $\sg$ (\rdefn{D4.16}).
Note that every $\vte\in  CM\ek{\cF,\cG}$ is the scattering suboperator of some coupling 
$\sg$, i.\,e., $\vte_\sg=\vte$.
Moreover, under the natural condition of minimality (\rdefn{D4.10}), the coupling 
$\sg$ is determined up to unitary equivalence (\rthm{T4.18}).
The definition of unitary colligations $\Dl$ (\rdefn{de1.1}) has direct connections with the special subclass of 
\emph{orthogonal} couplings $\sg$ (\rdefn{D4.24}) whose suboperators $\vte_\sg$ belong to $L_+^\infty\ek{\cF,\cG}$.
Moreover, $\vte_\sg=\bv{\te_\Dl^\sch}$, where $\te_\Dl\in\SFGF$ is the characteristic function of corresponding colligation $\Dl$ (\rthm{T4.26}).

The key tool for describing extensions of form \eqref{E1.11-1003} in terms of couplings is the notion of \emph{unilateral input} and \emph{output scattering channels for a coupling} $\sg$ \zitaa{BD-1}{\cpart{II}} which are directly related to unilateral coshifts and shifts contained in $U$ (\rdefn{D4.19}).
Herewith the unilateral channels generated by $\cG$ and $\cF$ are called \emph{the principal input} and \emph{output channels} of $\sg$, respectively.
This enables each up-leftward extension $\Xi$ of form \eqref{E1.11-1003} to be associated with some rearrangement of a coupling $\sg$ ($\vte_\sg=\vte$) by extending its principal unilateral channels in such a way that the suboperator of the obtained coupling is $\Xi$ (see details in \rsubsec{subsec4.4-1003}).

Distinguishing among all extensions of form \eqref{E1.11-1003} the so-called \emph{regular} ones, when the extension of the principal channels mentioned above is possible without extending the connecting space $\cH$, we come to an important property of such $\Xi$.
In this case, for any pair of regular extensions $\Omega$ and $\Upsilon$ of form \eqref{E1.12-1003} there exists a unique regular extension $\Xi$ of form \eqref{E1.11-1003}.
In an obvious way, the concept of regularity can be spread to Schur extensions of form \eqref{E1.8-1003}.

For an orthogonal minimal coupling $\sg$, one can naturally define the orthogonal decomposition $\cH=\cH_{\mathrm{int}}\oplus\cH_{\mathrm{ext}}$ of the space $\cH$ into \emph{the internal} and \emph{external subspaces} (see \rsubsec{subsec4.5-1003}).
In this regard, those unilateral channels of $\sg$ which are realized on subspaces of 
$\cH_{\mathrm{int}}$ or $\cH_{\mathrm{ext}}$ are termed \emph{internal} or \emph{external}, respectively.
Note that the principal unilateral channels of $\sg$ are external.
Exactly the use of internal unilateral channels of $\sg$ for extending its principal (external) unilateral ones allows to describe all regular Schur extensions of $\te\in\SFFG$ ($\vte_\sg=\bv{\te}^\sch$).

Proceeding from the regularity of the Schur extensions $\Omega_0$ and $\Upsilon_0$ of form \eqref{E1.10-1003} (see \rthm{T4.31}) and the uniqueness of the completion to a regular up-leftward extension of form \eqref{E1.11-1003} when $\vte_{22}\defeq\bv{\te}$, $\vte_{12}\defeq\bv{\vphi}$, $\vte_{21}\defeq\bv{\psi}$, we can introduce the important function \cite{MR1491502}:

\textit{``The unique function $\chi\in  CM\ek{\cF_0,\cG_0}$ such that the function
\[
\Xi_0
\defeq\Mat{\chi&\bv{\vphi}\\\bv{\psi}&\bv{\te}}
\in CM\ek{\cF_0\oplus\cF,\cG_0\oplus\cG}
\]
is a regular up-leftward extension of $\bv{\te}$, where $\te\in\SFFG$, is called the suboperator of internal scattering of an orthogonal minimal coupling $\sg$ such that $\vte_\sg=\bv{\te}^\sch$.''}

Note that earlier the function $\chi$ appeared for a particular case in \cite{MR2013544}.
Now a description of all regular Schur extensions of $\te$ in terms $\vphi$, $\psi$,
$\chi$ can be given in the following way (\rthmp{th4.36}{T4.36.a}):

\textit{``A function
\[
\Xi
\defeq\Mat{\te_{11}&\te_{12}\\\te_{21}&\te_{22}}
\in  H^\infty\ek{\cF'\oplus\cF,\cG'\oplus\cG},
\qquad\te_{22}\defeq\te,
\]
is a regular up-leftward Schur extension of a function $\te\in\SFFG$ if and only if the functions $\te_{12}$, $\te_{21}$, $\bv{\te_{11}}$ admit the representations
\[
\te_{12}=\omega\vphi,
\qquad\te_{21}=\psi\upsilon,
\qquad\bv{\te_{11}}=\bv{\om}\chi\bv{\upsilon},
\]
where $\om\in\SF{\cG_0}{\cG'}$ and $\upsilon\in\SF{\cF'}{\cF_0}$ are $*$\nobreakdash-inner and inner functions, respectively.''}

A special case of the problem of describing Schur extensions of form \eqref{E1.8-1003} is the problem of \emph{the Darlington synthesis} arising in the electrical network theory \cite{MR0001658}.
It needs to find all two-sided inner extensions of form \eqref{E1.8-1003} for a given Schur function $\te\in\SFFG$ if they exist.
In terms of this article, the answer for the regular case is given in \rthmp{T4.30}{T4.30.b} and \rthmp{th4.36}{T4.36.b}.

Another approach to solving the Darlington problem which is due to D.~Z.~Arov \cite{MR0428098} and, independently, due to R.~G.~Douglas and J.~W.~Helton \cite{MR0322538} is presented in \rsubsec{subsec4.8-1003}.
Let $\De\defeq\setaca{\ze\in\D}{\abs{\ze}>1}\cup\set{\infty}$.
It turned out that the property of bounded type pseudocontinuability across the unit circle $\T$ to $\De$ of a Schur function $\te\in\SFFG$ (\rdefn{D4.42}) is a sufficient condition for the existence of a two-sided inner extension of $\te$ (\rthm{T4.46}).
However,  in the general case this property is not necessary, being such if $\dim\cF<\infty$, $\dim\cG<\infty$ (\rthm{T4.48}).
D.~Z.~Arov proposed in \cite{MR2013544} a more general sufficient condition (\rthm{T4.50}), about which it seems to be unknown whether it is necessary.

\section{Unitary colligations, open systems and characteristic operator functions}\label{sec2-0927}

\subsection{Unitary colligations, shifts and coshifts contained in contraction}\label{Sec1.1}
Let $T$ be a contraction acting in some Hilbert space $\cH$, \tie{},\  $\norm{T}\le 1$.
The operators 
\begin{align*}
    D_T&\defeq \sqrt{I_\cH-T^*T}&
    &\text{and}&
    D_{T^*}&\defeq \sqrt{I_\cH-TT^*}
\end{align*}
are called \emph{the defect operators} of $T$.
The closures of their ranges
\begin{align*}
    \cD_T&\defeq \ol{D_T(\cH)}&
    &\text{and}&
    \cD_{T^*}&\defeq \ol{D_{T^*}(\cH)}
\end{align*}
are said to be \emph{the defect spaces} of $T$.
The dimensions of these spaces
\begin{align*}
    \dl_T&\defeq \dim{\cD_T}&
    &\text{and}&
    \dl_{T^*}&\defeq \dim{\cD_{T^*}}
\end{align*}
are called \emph{the defect numbers} of the contraction $T$.
In this way, the condition $\dl_T=0$ (\tresp{}\ $\dl_{T^*}=0$) characterizes isometric (\tresp{}\ coisometric) operators, whereas the conditions $\dl_T=\dl_{T^*}=0$ characterize unitary operators.
Recall that an operator is called coisometric if its adjoint is isometric.

Clearly, $TD_T^2=D_{T^*}^2T $.
From here (see \cite[Chapter~I]{SN}) it follows that $TD_T=D_{T^*}T$.
Forming the adjoint operators, we obtain
\beql{1.1}
T^*D_{T^*}
=D_TT^*.
\eeq
Starting from the contraction $T$, we can always find Hilbert spaces $\cF$ and $\cG$ and operators $F\colon\cF\to\cH$, $G\colon\cH\to\cG$, and $S\colon\cF\to\cG$ such that the operator matrix
\beql{1.2}
Y
=\Mat{T&F\\ G&S} \colon\cH\oplus\cF\to\cH\oplus\cG
\eeq
is unitary, \tie{},\ the conditions $Y^*Y=I_{\cH\oplus\cF}$, $YY^*=I_{\cH\oplus\cG}$ are satisfied.
Obviously, these equalities can be rewritten in the form
\beql{1.3}
\begin{aligned}
    G^*G+T^*T&=I_{\cH},&S^*S+F^*F&=I_{\cF},&G^*S+T^*F&=0\\
    TT^*+FF^*&=I_{\cH},&GG^*+SS^*&=I_{\cG},&TG^*+FS^*&=0.
\end{aligned}
\eeq
As an example of such a construction, one can consider the spaces

\beql{1.100}
\cF\defeq \cD_{T^*},\ \cG\defeq \cD_T
\eeq
and the operators
\begin{align}\label{1.101}
    F&\defeq \operatorname{Rstr}_{\cF}D_{T^*} \in [\cF,\cH]&
    G&\defeq D_T \in [\cH,\cG],&
    S&\defeq \operatorname{Rstr}_{\cF}(-T^*) \in [\cF,\cG].
\end{align}
Using \eqref{1.1}, it is easily checked that conditions \eqref{1.3} are fulfilled in this case.
Note that in the general situation from \eqref{1.3} it follows $G^*G=D_T^2$, $FF^*=D_{T^*}^2$.
Hence,
\begin{align}\label{1.4}
    \ol{F(\cF)}&=\cD_{T^*},&
    \ol{G^*(\cG)}&=\cD_T.
\end{align}

\begin{defn}\label{de1.1}
    The 7-tuple
    \beql{E2.5}
    \Dl=(\cH, \cF, \cG; T, F, G, S)
    \eeq
    consisting of three Hilbert spaces $\cH,\cF,\cG$ and four operators $T,F,G,S$ where
    \begin{align*}
        T&\colon\cH\to\cH,&
        F&\colon\cF\to\cH,&
        G&\colon\cH\to\cG,&
        S&\colon\cF\to\cG
    \end{align*}
    is called a \emph{unitary colligation} (or more short \emph{colligation}) if the operator matrix $Y$ given via \eqref{1.2} is unitary. 
\end{defn}

The operator $T$ is called \emph{the fundamental operator} of the colligation $\Dl$.
Clearly, the fundamental operator of a colligation is a contraction.
The operation of representing a contraction $T$ as fundamental operator of a unitary colligation is called \emph{embedding} $T$ in a colligation.
The space $\cH$ of the colligation $\Dl$ is said to be \emph{internal} and the spaces $\cF$ and $\cG$ are called \emph{external}, namely, \emph{input external} space and 
\emph{output external} space, respectively.
This embedding permits to use the spectral theory of unitary operators for the study of contractions (see \cite{SN}).

We will use the notation $\bigvee_{\alpha\in A}\cL_\alpha$, where $\cL_\alpha\subset\cH$, $\alpha\in A$, for denoting the smallest subspace of the space $\cH$ which contains all $\cL_\alpha$, $\alpha\in A$.
The spaces
\begin{align*}
    \cH_{\cF}&\defeq \bigvee_{n=0}^{\infty}T^nF(\cF),&
    \cH_{\cG}&\defeq \bigvee_{n=0}^{\infty}T^{*n}G^*(\cG)
\end{align*}
and their orthogonal complements $\cH_{\cF}^\bot\defeq \cH\ominus\cH_{\cF}$, $\cH_{\cG}^\bot\defeq \cH\ominus\cH_{\cG}$ play an important role in the theory of colligations. 
Therefore, we will use the decompositions
\begin{align}\label{1.7}
    \cH&=\cH_{\cF}^\bot\oplus\cH_{\cF},&
    \cH&=\cH_{\cG}\oplus\cH_{\cG}^\bot.
\end{align}
in what follows.
From \eqref{1.4} it follows that the spaces $\cH_\cF$ and $\cH_\cG$ can also be defined on an alternate way, namely
\begin{align}\label{1.8}
    \cH_{\cF}&\defeq \bigvee_{n=0}^{\infty}T^n\cD_{T^*},&
    \cH_{\cG}&\defeq \bigvee_{n=0}^{\infty}T^{*n}\cD_T.
\end{align}
Consequently, the spaces $\cH_\cF$ and $\cH_\cG$ do not depend on the concrete way of embedding $T$ in a colligation.
Note that $\cH_\cF$ is invariant with respect to $T$ whereas $\cH_\cG$ is invariant with respect to $T^*$.
This means that $\cH_\cF^\bot$ and $\cH_\cG^\bot$ are invariant with respect to $T^*$ and $T$, respectively.
Passing on to the kernels of the adjoint operators in equalities \eqref{1.8}, we obtain
\begin{align}
    \cH_\cF^\bot&=\bigcap_{n=0}^\infty\ker(D_{T^*}T^{*n}),\label{E2.10-20230623}\\
    \cH_\cG^\bot&=\bigcap_{n=0}^\infty\ker(D_TT^n).\label{E2.11-20230623}
\end{align}

\begin{thm}[{\cite[Part~VI]{D82}}]
    The equalities
    \[
    \cH_\cF^\bot
    =\{h\in\cH : \norm{T^{*n}h}=\norm{h}, \ n=1,2,3,\dotsc \}
    \]
    and
    \[
    \cH_\cG^\bot
    =\{h\in\cH : \norm{T^nh}=\norm{h}, \ n=1,2,3,\dotsc \}
    \]
    hold true.
\end{thm}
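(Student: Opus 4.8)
The plan is to derive both equalities from the kernel representations \eqref{E2.10-20230623} and \eqref{E2.11-20230623} already obtained, using only the elementary identity that relates a defect operator to the drop of norm under one application of the operator. Since the second equality is obtained from the first by replacing $T$ with $T^*$ (recall $D_{(T^*)^*}=D_T$, and $\cH_\cG$ plays for $T^*$ the role that $\cH_\cF$ plays for $T$), I would prove only the statement for $\cH_\cG^\bot$ in detail and then invoke this symmetry.

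First I would record that, because $D_T^2=I_\cH-T^*T$, for every $g\in\cH$ one has
\[
\norm{D_Tg}^2
=\langle(I_\cH-T^*T)g,g\rangle
=\norm{g}^2-\norm{Tg}^2.
\]
Applying this with $g=T^nh$ yields $\norm{D_TT^nh}^2=\norm{T^nh}^2-\norm{T^{n+1}h}^2$. If $h\in\cH_\cG^\bot$, then by \eqref{E2.11-20230623} we have $D_TT^nh=0$ for every $n\ge0$, so $\norm{T^{n+1}h}=\norm{T^nh}$ for all $n\ge0$; telescoping from the trivial base $\norm{T^0h}=\norm{h}$ gives $\norm{T^nh}=\norm{h}$ for all $n\ge1$. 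Conversely, if $\norm{T^nh}=\norm{h}$ for all $n\ge1$ (and trivially for $n=0$), the same identity gives $\norm{D_TT^nh}^2=\norm{h}^2-\norm{h}^2=0$ for every $n\ge0$, whence $h\in\bigcap_{n=0}^\infty\ker(D_TT^n)=\cH_\cG^\bot$ by \eqref{E2.11-20230623}. This establishes the second equality, and the first follows in exactly the same way from \eqref{E2.10-20230623} together with $\norm{D_{T^*}g}^2=\norm{g}^2-\norm{T^*g}^2$.

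There is no genuine obstacle here: once the kernel descriptions \eqref{E2.10-20230623}--\eqref{E2.11-20230623} are available, the entire content is the one-line defect identity combined with a telescoping of norms. The only point demanding a little care is the bookkeeping of the index range—in particular retaining the term $n=0$, which furnishes the base case $\norm{T^0h}=\norm{h}$ of the induction—and checking that both inclusions rest on precisely the same identity.
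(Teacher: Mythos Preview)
Your proof is correct and follows essentially the same route as the paper: both arguments rest on the identity $\norm{T^{n-1}h}^2-\norm{T^nh}^2=\norm{D_TT^{n-1}h}^2$ combined with the kernel descriptions \eqref{E2.10-20230623}--\eqref{E2.11-20230623}, and both invoke the $T\leftrightarrow T^*$ symmetry for the remaining equality.
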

\begin{proof}
    For $n=1,2,3,\dotsc$, clearly
    \[
    \norm{T^nh}^2
    =\ip{T^{*n}T^nh}{h}
    =\ip{T^{*n-1}T^{n-1}h}{h}-\ip{T^{*n-1}D_T^2T^{n-1}h}{h}.
    \]
    Now the second assertion follows from \eqref{E2.11-20230623} and the equality
    \begin{align*}
        \norm{T^{n-1}h}^2-\norm{T^nh}^2
        &=\norm{D_TT^{n-1}h}^2,&n&=1,2,3,\dotsc.
    \end{align*}
    Analogously, the first assertion can be proved, using \eqref{E2.10-20230623}.
\end{proof}

\begin{cor}\label{cor1.1} 
    The space $\cH_\cF^\bot\;(\text{resp.\ }\cH_\cG^\bot)$ is characterized by the following properties:
    \benui
    \il{cor1.1.a} $\cH_\cF^\bot\;(\text{resp.\ }\cH_\cG^\bot)$ is invariant with respect to $T^*$ $(\text{resp.\ }T)$.
    \il{cor1.1.b} $ \rstr{T^*}{\cH_\cF^\bot} \ (\tresp{}\ \rstr{T}{\cH_\cG^\bot})$
    is an isometric operator.
    \il{cor1.1.c} $\cH_\cF^\bot\;(\text{resp.\ }\cH_\cG^\bot)$ is the largest subspace of $\cH$ having the properties \eqref{cor1.1.a}, \eqref{cor1.1.b}.
    \eenui
\end{cor}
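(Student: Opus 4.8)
The plan is to reduce the whole statement to the characterization of $\cH_\cG^\bot$ (\tresp\ $\cH_\cF^\bot$) furnished by the preceding theorem, namely $\cH_\cG^\bot=\setaca{h\in\cH}{\norm{T^nh}=\norm{h},\ n=1,2,3,\dotsc}$. I would carry out the argument for $\cH_\cG^\bot$ and the operator $T$, the statement for $\cH_\cF^\bot$ and $T^*$ following by the evident symmetry obtained from replacing $T$ by $T^*$ throughout.

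First, property \eqref{cor1.1.a} is already recorded in the discussion preceding the theorem, where it was observed that $\cH_\cG$ is invariant under $T^*$ and hence $\cH_\cG^\bot$ is invariant under $T$; alternatively it can be read off the characterization directly, since for $h\in\cH_\cG^\bot$ one has $\norm{T^n(Th)}=\norm{T^{n+1}h}=\norm{h}=\norm{Th}$ for every $n\ge1$, whence $Th\in\cH_\cG^\bot$. Property \eqref{cor1.1.b} is then immediate: taking $n=1$ in the characterization gives $\norm{Th}=\norm{h}$ for every $h\in\cH_\cG^\bot$, so $\rstr{T}{\cH_\cG^\bot}$ is isometric.

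The substance of the corollary is the maximality statement \eqref{cor1.1.c}. Here I would take an arbitrary subspace $\cL\subseteq\cH$ enjoying \eqref{cor1.1.a} and \eqref{cor1.1.b} and show $\cL\subseteq\cH_\cG^\bot$. Fix $h\in\cL$. Because $\cL$ is invariant under $T$, we have $T^{n-1}h\in\cL$ for every $n\ge1$, and because $\rstr{T}{\cL}$ is isometric, $\norm{T^nh}=\norm{T(T^{n-1}h)}=\norm{T^{n-1}h}$. An immediate induction then yields $\norm{T^nh}=\norm{h}$ for all $n\ge1$, so $h$ meets the defining condition of $\cH_\cG^\bot$ and thus $h\in\cH_\cG^\bot$. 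Since $\cH_\cG^\bot$ itself satisfies \eqref{cor1.1.a} and \eqref{cor1.1.b} by the first two steps, it is the largest subspace with these properties.

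There is no genuine obstacle here; every step is elementary once the preceding theorem is in hand. The only point needing a little care is the bookkeeping in \eqref{cor1.1.c}: one must combine invariance and the isometry of the restriction to propagate norm preservation through all powers $T^n$, which is exactly the induction above. I would therefore state explicitly that each inductive step uses both hypotheses, since dropping either one destroys the conclusion.
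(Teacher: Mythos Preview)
Your proof is correct and follows exactly the approach the paper intends: the corollary is stated without a separate proof, as an immediate consequence of the preceding theorem's characterization $\cH_\cG^\bot=\setaca{h\in\cH}{\norm{T^nh}=\norm{h},\ n=1,2,3,\dotsc}$ together with the invariance facts recorded just before it. Your derivation of \eqref{cor1.1.a}--\eqref{cor1.1.c} from this characterization is the natural one and matches the paper's implicit reasoning.
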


From the foregoing consideration we immediately obtain the following result.

\begin{thm}[{\cite[\cch{I}, \cthm{3.2}]{SN}}]\label{th1.2}
    The equality
    \[
    \cH_\cF^\bot\cap\cH_\cG^\bot
    =\{h\in\cH : \norm{T^{*n}h}=\norm{h}=\norm{T^nh}, \ n=1,2,3,\dotsc \}
    \]
    holds true.
\end{thm}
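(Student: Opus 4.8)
The plan is to obtain the equality by simply intersecting the two set-theoretic descriptions of $\cH_\cF^\bot$ and $\cH_\cG^\bot$ furnished by the preceding theorem. That theorem identifies $\cH_\cF^\bot$ with the collection of all $h\in\cH$ satisfying $\norm{T^{*n}h}=\norm{h}$ for every $n\in\{1,2,3,\dotsc\}$, and identifies $\cH_\cG^\bot$ with the collection of all $h\in\cH$ satisfying $\norm{T^nh}=\norm{h}$ for every such $n$.

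First I would observe that a vector $h$ lies in $\cH_\cF^\bot\cap\cH_\cG^\bot$ if and only if it belongs to both of these sets simultaneously, that is, if and only if both $\norm{T^{*n}h}=\norm{h}$ and $\norm{T^nh}=\norm{h}$ hold for all $n=1,2,3,\dotsc$. Merging the two chains of equalities into the single condition $\norm{T^{*n}h}=\norm{h}=\norm{T^nh}$ reproduces precisely the set on the right-hand side of the asserted equality, and both inclusions are then immediate.

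Accordingly, there is no substantive obstacle in this step: the entire content resides in the preceding theorem, whose proof rests on the telescoping identity $\norm{T^{n-1}h}^2-\norm{T^nh}^2=\norm{D_TT^{n-1}h}^2$ together with the kernel representations \eqref{E2.10-20230623} and \eqref{E2.11-20230623}. The only formal point worth verifying is that the two characterizations range over the same index set $n\in\{1,2,3,\dotsc\}$, which they manifestly do; hence the intersection is exactly the stated set, and the proof is complete by this direct combination.
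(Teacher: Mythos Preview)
Your proposal is correct and matches the paper's approach exactly: the paper does not even supply a formal proof, but simply writes ``From the foregoing consideration we immediately obtain the following result'' before stating the theorem. Intersecting the two characterizations of $\cH_\cF^\bot$ and $\cH_\cG^\bot$ from the preceding theorem is precisely what is intended.
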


\begin{cor}\label{cor1.2}
    The subspace $\cH_\cF^\bot\cap\cH_\cG^\bot$ is the largest among all subspaces $\cH'$ of $\cH$ having the following properties:
    \begin{enuit}
        \il{cor1.2.a} $\cH'$ reduces $T$.
        \il{cor1.2.b} $ \rstr{T}{\cH'} $ is a unitary operator.
    \end{enuit}
\end{cor}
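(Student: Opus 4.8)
The plan is to establish the statement in two halves: first that the subspace $\cH_0\defeq\cH_\cF^\bot\cap\cH_\cG^\bot$ itself possesses properties \eqref{cor1.2.a} and \eqref{cor1.2.b}, and then that any subspace $\cH'$ possessing these properties is contained in $\cH_0$. For the containment I would invoke \rcor{cor1.1} directly, whereas for the fact that $\cH_0$ reduces $T$ to a unitary operator I would exploit the norm description provided by \rthm{th1.2}.

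For the maximality, let $\cH'$ be a subspace that reduces $T$ and for which $\rstr{T}{\cH'}$ is unitary. Then $\cH'$ is $T$-invariant and $\rstr{T}{\cH'}$ is in particular isometric, so the ``resp.'' version of \rcor{cor1.1}, property \eqref{cor1.1.c}, gives $\cH'\subseteq\cH_\cG^\bot$. Since $\cH'$ reduces $T$, it is simultaneously $T^*$-invariant, and $\rstr{T^*}{\cH'}=\rk{\rstr{T}{\cH'}}^*$ is the adjoint of a unitary operator, hence again isometric; the main version of \rcor{cor1.1}, property \eqref{cor1.1.c}, then yields $\cH'\subseteq\cH_\cF^\bot$. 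Intersecting the two inclusions gives $\cH'\subseteq\cH_0$, as required.

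It remains to verify that $\cH_0$ itself satisfies \eqref{cor1.2.a} and \eqref{cor1.2.b}. Fix $h\in\cH_0$; by \rthm{th1.2} we have $\norm{T^nh}=\norm{h}=\norm{T^{*n}h}$ for all $n\ge1$, in particular $\norm{Th}=\norm{T^*h}=\norm{h}$. The decisive step is that this norm preservation forces $T^*Th=h$ and $TT^*h=h$: expanding $\norm{h-T^*Th}^2$ and using $\ip{h}{T^*Th}=\norm{Th}^2=\norm{h}^2$ together with $\norm{T^*Th}\le\norm{Th}=\norm{h}$ shows $\norm{h-T^*Th}^2\le0$, and the second identity follows symmetrically. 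Granting these identities, I would re-apply \rthm{th1.2} to confirm $Th,T^*h\in\cH_0$: for $n\ge1$ one has $T^n(Th)=T^{n+1}h$ and, since $T^*Th=h$, also $T^{*n}(Th)=T^{*(n-1)}h$, so every norm in the criterion equals $\norm{h}=\norm{Th}$, and the computation for $T^*h$ is the mirror image. Thus $\cH_0$ is invariant under both $T$ and $T^*$, i.e.\ it reduces $T$, which is \eqref{cor1.2.a}; moreover $\rstr{T}{\cH_0}$ is isometric, and the relations $\rstr{T^*}{\cH_0}\rstr{T}{\cH_0}=I_{\cH_0}=\rstr{T}{\cH_0}\rstr{T^*}{\cH_0}$ (which are precisely $T^*Th=h$ and $TT^*h=h$) show it is unitary, which is \eqref{cor1.2.b}.

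The point I expect to require the most care is the $T$-invariance of $\cH_0$. One cannot merely intersect invariant subspaces here, because \rcor{cor1.1} only furnishes $T^*$-invariance of $\cH_\cF^\bot$ and $T$-invariance of $\cH_\cG^\bot$; it is the norm criterion of \rthm{th1.2}, together with the equality-case identity $T^*Th=h$, that actually delivers invariance under both $T$ and $T^*$ and hence the full reduction property.
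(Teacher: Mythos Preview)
Your proof is correct and follows essentially the same route the paper has in mind: the corollary is stated without proof, as an immediate consequence of \rcor{cor1.1} and \rthm{th1.2}, and your argument is precisely the natural filling-in of those details. Your observation that the invariance of $\cH_0$ under both $T$ and $T^*$ does not come for free from intersecting $\cH_\cF^\bot$ and $\cH_\cG^\bot$, but requires the equality-case identities $T^*Th=h$, $TT^*h=h$, is exactly the point.
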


A contraction $T$ on $\cH$ is called \emph{completely nonunitary} if there is no nontrivial reducing subspace $\cL$ of $\cH$ for which the operator $ \rstr{T}{\cL}$ is unitary.
Consequently, a contraction is completely nonunitary if and only if $\cH_\cF^\bot\cap\cH_\cG^\bot=\set{0}$.
The colligation $\Dl$ given in \eqref{E2.5} is called \emph{simple} if $\cH=\cH_\cF\vee\cH_\cG$ 
and \emph{abundant} otherwise.
Hence, the colligation $\Dl$ is simple if and only if its fundamental operator $T$ is a completely nonunitary contraction.

Taking into account the Wold decomposition for isometric operators  (see, e.g., \cite[Ch.~I]{SN}), from \rcor{cor1.1} we infer the following result:

\begin{thm}[{\cite[Part~VI]{D82}}]\label{th1.3}
    Let $T \in [\cH]$ be a completely nonunitary contraction. 
    Then the subspace $\cH_\cF^\bot\;(\text{resp.\ }\cH_\cG^\bot)$ is characterized by the following properties:
    \benui
    \il{th1.3.a} The subspace $\cH_\cF^\bot\;(\text{resp.\ }\cH_\cG^\bot)$ is invariant with respect to $T^*$ $(\text{resp.\ }T)$.
    \il{th1.3.b} The operator $ \rstr{T^*}{\cH_\cF^\bot}\;(\text{resp.\ }\rstr{T}{\cH_\cG^\bot})$   is a unilateral shift.
    \il{th1.3.c} $\cH_\cF^\bot\;(\text{resp.\ }\cH_\cG^\bot)$ is the largest subspace of $\cH$ having the properties \eqref{th1.3.a}, \eqref{th1.3.b}.
    \eenui
\end{thm}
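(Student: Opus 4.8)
The plan is to combine \rcor{cor1.1} with the Wold decomposition of isometric operators and to exploit the complete nonunitarity of $T$ through \rcor{cor1.2}. I treat the subspace $\cH_\cF^\bot$ only; the statement for $\cH_\cG^\bot$ then follows by applying the very same argument to $T^*$ in place of $T$ (which interchanges the roles of $\cH_\cF^\bot$ and $\cH_\cG^\bot$). Property \eqref{th1.3.a} is immediate from \rcor{cor1.1}, and by that corollary the operator $V\defeq\rstr{T^*}{\cH_\cF^\bot}$ is an isometry acting on the $T^*$-invariant space $\cH_\cF^\bot$.

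Next I would invoke the Wold decomposition of the isometry $V$, writing $\cH_\cF^\bot=\cK_u\oplus\cK_s$, where $\cK_u\defeq\bigcap_{n=0}^\infty V^n\rk{\cH_\cF^\bot}$ carries the unitary part of $V$ and $V$ acts as a unilateral shift on $\cK_s$. Thus establishing property \eqref{th1.3.b} reduces entirely to showing that the unitary part is trivial, $\cK_u=\set{0}$.

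The core step is to verify that $\cK_u$ reduces $T$ and that $\rstr{T}{\cK_u}$ is unitary. Since $\cK_u\subseteq\cH_\cF^\bot\subseteq\ker D_{T^*}$, the last inclusion being the $n=0$ term of the intersection in \eqref{E2.10-20230623}, every $h\in\cK_u$ satisfies $TT^*h=h$. Because $\rstr{V}{\cK_u}$ is unitary, $V$ maps $\cK_u$ onto $\cK_u$, so each $h\in\cK_u$ can be written $h=T^*g$ with $g\in\cK_u$; then $Th=TT^*g=g\in\cK_u$, which shows that $\cK_u$ is $T$-invariant and that $\rstr{T}{\cK_u}$ is isometric. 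As $\cK_u$ is also $T^*$-invariant (being contained in $\cH_\cF^\bot$ and satisfying $T^*\rk{\cK_u}=\cK_u$), it reduces $T$, and consequently $\rstr{T}{\cK_u}=\rk{\rstr{T^*}{\cK_u}}^*$ is unitary. Hence $\cK_u$ is a reducing subspace of $T$ on which $T$ acts unitarily, so \rcor{cor1.2} yields $\cK_u\subseteq\cH_\cF^\bot\cap\cH_\cG^\bot$. Complete nonunitarity of $T$ gives $\cH_\cF^\bot\cap\cH_\cG^\bot=\set{0}$, whence $\cK_u=\set{0}$ and $V$ is a pure unilateral shift, which is property \eqref{th1.3.b}.

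Finally, property \eqref{th1.3.c} follows from a maximality comparison: any subspace enjoying \eqref{th1.3.a} and \eqref{th1.3.b} in particular satisfies the hypotheses of \rcor{cor1.1}, a unilateral shift being isometric, and is therefore contained in $\cH_\cF^\bot$; since $\cH_\cF^\bot$ itself has properties \eqref{th1.3.a} and \eqref{th1.3.b}, it is the largest such subspace. I expect the main obstacle to be precisely the middle step, namely checking that the Wold unitary part $\cK_u$ is genuinely reducing for $T$ (and not merely $T^*$-invariant) with unitary restriction, so that \rcor{cor1.2} becomes applicable; once this is secured, the complete nonunitarity collapses $\cK_u$ to the zero subspace and the rest is formal.
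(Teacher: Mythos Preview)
Your proof is correct and follows precisely the approach the paper indicates: the paper states just before the theorem that the result is inferred from \rcor{cor1.1} together with the Wold decomposition for isometric operators, without spelling out the details. You have filled in exactly those details---applying Wold to the isometry $\rstr{T^*}{\cH_\cF^\bot}$, showing the unitary summand $\cK_u$ reduces $T$ with unitary restriction, and then killing it via complete nonunitarity and \rcor{cor1.2}---so your argument is the intended one, carried out in full.
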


We say that a unilateral shift $V\colon\cL\to\cL$ \emph{is contained} in the contraction $T$ if $\cL$ is a subspace of $\cH$ which is invariant with respect to $T$ and $\rstr{T}{\cL}=V$ is satisfied.

\begin{defn}
    Let $T \in [\cH]$ be a completely nonunitary contraction. 
    Then the shift $V_T\defeq\ \rstr{T}{\cH_\cG^\bot}$   is called \emph{the largest shift} contained in $T$.
\end{defn}

By a \emph{coshift} we mean an operator the adjoint of which is a unilateral shift.
We say that a coshift $\wt V\colon\tcL\to\wt \cL$ \emph{is contained} in $T$ if the unilateral shift $\wt V^*$ is contained in $T^*$.
Then from \rthm{th1.3} it follows that the operator $ V_{T^*}= \rstr{T^*}{\cH_\cF^\bot}  $ is the largest shift contained in $T^*$.
If $\cH_\cG^\bot=\set{0}$ (\tresp{}\ $\cH_\cF^\bot=\set{0}$) we will say that the shift $V_T$ (\tresp{}\ $V_{T^*}$) has multiplicity zero.

\begin{defn}
    Let $T \in [\cH]$ be a completely nonunitary contraction. 
    Then the coshift $\wt V_T\defeq (V_{T^*})^*$ is called \emph{the largest coshift} contained in $T$.
\end{defn}

\begin{thm}[{\cite[Theorem~1.9]{D06}}]\label{th1.4}
    Let $T \in [\cH]$ be a completely nonunitary contraction. 
    Then the multiplicities of the largest shifts $V_T$ and $V_{T^*}$ are not greater than $\dl_{T^*}$ and $\dl_T$,  respectively.
\end{thm}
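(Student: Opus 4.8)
The plan is to work with $V_T\defeq\rstr{T}{\cH_\cG^\bot}$, which by \rthm{th1.3} is a unilateral shift, and to bound its multiplicity, i.e.\ the dimension of its wandering subspace
\[
\cL\defeq\cH_\cG^\bot\ominus T\cH_\cG^\bot,
\]
by $\dl_{T^*}=\dim\cD_{T^*}$. First I would exhibit an injective linear map $\cL\to\cD_{T^*}$; the natural candidate is $\ell\mapsto D_{T^*}\ell$, whose range lies in $\cD_{T^*}$ by definition. Injectivity then yields $\dim\cL\le\dl_{T^*}$, which is the first assertion. The second assertion, concerning $V_{T^*}=\rstr{T^*}{\cH_\cF^\bot}$, will follow by applying the identical argument to the contraction $T^*$, for which the roles of $\cH_\cF$ and $\cH_\cG$ (and of $\cD_T$ and $\cD_{T^*}$) interchange.

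To prepare the map I would first record a description of $\cL$: a vector $\ell\in\cH_\cG^\bot$ lies in $\cL$ exactly when $\ell\perp T\cH_\cG^\bot$, and by adjunction this means $T^*\ell\perp\cH_\cG^\bot$, i.e.\ $T^*\ell\in\cH_\cG$. The heart of the proof is the injectivity of $D_{T^*}$ on $\cL$. Assume $\ell\in\cL$ with $D_{T^*}\ell=0$. Then $TT^*\ell=\ell$ and $\norm{T^*\ell}=\norm{\ell}$; put $v\defeq T^*\ell$, so that $v\in\cH_\cG$, $Tv=\ell$, and $\norm{v}=\norm{\ell}$. Using $T^nv=T^{n-1}\ell$ together with the fact that $\rstr{T}{\cH_\cG^\bot}$ is isometric (so $\norm{T^{n-1}\ell}=\norm{\ell}$ for all $n$, by the characterization of $\cH_\cG^\bot$ preceding \rcor{cor1.1}), I would show $\norm{T^nv}=\norm{\ell}=\norm{v}$ for every $n\ge1$. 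By that same characterization this forces $v\in\cH_\cG^\bot$.

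At this point the two facts $v\in\cH_\cG$ and $v\in\cH_\cG^\bot$ give $v\in\cH_\cG\cap\cH_\cG^\bot=\set{0}$, whence $\ell=Tv=0$; this proves $D_{T^*}|_\cL$ injective and hence $\dim\cL\le\dl_{T^*}$. The step I expect to be the main obstacle is precisely the claim $v\in\cH_\cG^\bot$: one has to realize that the wandering condition $T^*\ell\in\cH_\cG$ must be combined with the shift (isometry) property on $\cH_\cG^\bot$ to push $v=T^*\ell$ back into $\cH_\cG^\bot$ and so collide with $T^*\ell\in\cH_\cG$. Everything else --- the adjunction describing $\cL$, the norm identities, and the transfer of the whole argument to $T^*$ to obtain the bound $\dim(\cH_\cF^\bot\ominus T^*\cH_\cF^\bot)\le\dl_{(T^*)^*}=\dl_T$ for $V_{T^*}$ --- is routine.
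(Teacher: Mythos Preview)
Your argument is correct. The paper itself does not supply a proof of this theorem; it merely cites \cite[Theorem~1.9]{D06}, so there is no proof in the text to compare against. Your route via the injectivity of $D_{T^*}$ on the wandering subspace $\cL_0=\cH_\cG^\bot\ominus T\cH_\cG^\bot$ works exactly as you describe: the key identity $T^n v=T^{n-1}\ell$ together with the characterization $\cH_\cG^\bot=\{h:\norm{T^nh}=\norm{h},\ n\ge1\}$ forces $v=T^*\ell\in\cH_\cG^\bot$, and the already established membership $v\in\cH_\cG$ then kills $v$ and hence $\ell$.

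It is worth noting that what you prove is in fact equivalent, by duality, to the density statement recorded immediately afterwards as \rcor{cor1.9}: the map $D_{T^*}|_{\cL_0}$ is injective if and only if $P_{\cD_{T^*}}|_{\cL_0}$ is injective (both have kernel $\cL_0\cap\ker D_{T^*}=\cL_0\cap\cD_{T^*}^\bot$), and the latter is the adjoint of $P_{\cL_0}|_{\cD_{T^*}}$, whose dense range is exactly the assertion $\ol{P_{\cL_0}F(\cF)}=\cL_0$ (recall $\ol{F(\cF)}=\cD_{T^*}$). So your injectivity argument simultaneously yields the corollary.
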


\begin{cor}[{\cite[Corollary~1.10]{D06}}]\label{cor1.9}
    Let $\Dl$ be a simple unitary colligation of type \eqref{E2.5}.
    Denote $\cL_0$ and $\tcL_0$ the generating wandering subspaces for the largest shifts $V_T$ and $V_{T^*}$,  respectively.
    Then $\ol{P_{\cL_0}F(\cF)}=\cL_0$, $\ol{P_{\tcL_0}G^*(\cG)}=\tcL_0$, where $P_{\cL_0}$ and $P_{\tcL_0}$ are the orthogonal projections from $\cH$ onto $\cL_0$ and $\tcL_0$,  respectively.
\end{cor}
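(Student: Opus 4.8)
The plan is to reduce the first identity to the single assertion that $\cL_0\cap\ker D_{T^*}=\set{0}$. Since $P_{\cL_0}F(\cF)\subseteq\cL_0$, the inclusion $\ol{P_{\cL_0}F(\cF)}\subseteq\cL_0$ is immediate, so I only need the reverse one, i.e.\ $\cL_0\ominus\ol{P_{\cL_0}F(\cF)}=\set{0}$. For $h\in\cL_0$ and $f\in\cF$ one has $\langle h,P_{\cL_0}Ff\rangle=\langle P_{\cL_0}h,Ff\rangle=\langle h,Ff\rangle=\langle F^*h,f\rangle$, so $h\perp P_{\cL_0}F(\cF)$ if and only if $F^*h=0$; because $FF^*=D_{T^*}^2$ (a consequence of \eqref{1.3}), this is equivalent to $h\in\ker D_{T^*}$. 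Hence $\cL_0\ominus\ol{P_{\cL_0}F(\cF)}=\cL_0\cap\ker D_{T^*}$, and everything comes down to the triviality of this intersection.

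Now I would fix $h\in\cL_0$ with $D_{T^*}h=0$ and deduce $h=0$, recording three facts. (a) Since $\cL_0\subseteq\cH_\cG^\bot$, \eqref{E2.11-20230623} gives $D_TT^nh=0$ for all $n\geq0$. (b) Since $\cL_0=\cH_\cG^\bot\ominus T\cH_\cG^\bot$ is the generating wandering subspace of $V_T$, for every $k\in\cH_\cG^\bot$ we get $\langle T^*h,k\rangle=\langle h,Tk\rangle=0$, so $g\defeq T^*h\in\cH_\cG$. (c) From $D_{T^*}h=0$ we obtain $(I_\cH-TT^*)h=D_{T^*}^2h=0$, that is $TT^*h=h$, whence $Tg=h$. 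The \textbf{crux}, the step I expect to carry the whole argument, is to show that $g$ lies in $\cH_\cG^\bot$ as well. For $m\geq1$,
\[
D_TT^mg=D_TT^{m-1}(TT^*h)=D_TT^{m-1}h=0
\]
by (c) followed by (a), while for $m=0$ the relation \eqref{1.1} yields $D_Tg=D_TT^*h=T^*D_{T^*}h=0$. By \eqref{E2.11-20230623} these identities say exactly that $g\in\cH_\cG^\bot$. Combined with (b) this forces $g\in\cH_\cG\cap\cH_\cG^\bot=\set{0}$, hence $h=Tg=0$. Thus $\cL_0\cap\ker D_{T^*}=\set{0}$ and $\ol{P_{\cL_0}F(\cF)}=\cL_0$.

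Finally, the second identity is obtained by running the same argument for the adjoint. Passing from $T$ to $T^*$ interchanges the pairs $(\cH_\cF,\cH_\cG)$ and $(F,G)$ and the shifts $V_T,V_{T^*}$, so that $\tcL_0=\cH_\cF^\bot\ominus T^*\cH_\cF^\bot$ is the generating wandering subspace of $V_{T^*}$; using $\ol{G^*(\cG)}=\cD_T$ from \eqref{1.4} and $G^*G=D_T^2$ from \eqref{1.3} one reduces to $\tcL_0\cap\ker D_T=\set{0}$, and the computation of the previous paragraph goes through verbatim (now via \eqref{E2.10-20230623}) with the roles of \eqref{1.1} and the identity $TD_T=D_{T^*}T$ interchanged. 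Throughout, simplicity of $\Dl$ enters only to invoke \rthm{th1.3}, which guarantees that $V_T$ and $V_{T^*}$ are genuine unilateral shifts and hence that the wandering subspaces $\cL_0$ and $\tcL_0$ in the statement are well defined.
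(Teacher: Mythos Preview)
Your argument is correct. The reduction to $\cL_0\cap\ker D_{T^*}=\{0\}$ is the right move (it is just the elementary fact that $\cL_0\ominus\overline{P_{\cL_0}\cD_{T^*}}=\cL_0\cap\cD_{T^*}^\bot$ together with \eqref{1.4}), and your way of disposing of a vector $h$ in this intersection---pushing it back by $T^*$ to $g$, then showing $g$ lands simultaneously in $\cH_\cG$ (via the wandering property $\cL_0\perp T\cH_\cG^\bot$) and in $\cH_\cG^\bot$ (via \eqref{E2.11-20230623} and the intertwining \eqref{1.1})---is clean and self-contained.

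As for comparison: the paper does not actually supply a proof of this corollary; it merely quotes it from \cite{D06}, placing it immediately after \rthm{th1.4} (the bound on the multiplicity of $V_T$). So there is nothing in the present paper to compare your argument against line by line. Your proof has the advantage of being entirely elementary---it uses only the defect-operator identities \eqref{1.1}, \eqref{1.3} and the kernel descriptions \eqref{E2.10-20230623}--\eqref{E2.11-20230623}, and it does not pass through the multiplicity estimate of \rthm{th1.4} at all. One minor remark: your closing sentence slightly understates the role of simplicity. It is true that simplicity is what makes $V_T$ a genuine shift (via \rthm{th1.3}) and hence makes $\cL_0$ well defined, but note that every step of your actual computation already goes through once $\cL_0=\cH_\cG^\bot\ominus T\cH_\cG^\bot$ is granted; you never use $\cH_\cF^\bot\cap\cH_\cG^\bot=\{0\}$ directly.
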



Let $T$ be a completely nonunitary contraction in $\cH$.
Then
\begin{align}
    \cH_\cF^\bot&=\bigoplus_{n=0}^\infty\wt{V}_T^{*n}\wt{\cL}_0=\bigoplus_{n=0}^\infty T^{*n}\wt{\cL}_0=
    \dotsb\oplus T^{*n}\wt{\cL}_0\oplus\dotsb\oplus T^*\wt{\cL}_0\oplus\wt{\cL}_0,\notag\\
    \cH_\cG^\bot&=\bigoplus_{n=0}^\infty V_T^n\cL_0=\bigoplus_{n=0}^\infty T^n\cL_0=\cL_0\oplus T\cL_0\oplus 
    \dotsb\oplus T^n\cL_0\oplus \dotsb\label{E2.1002}.
\end{align}
Therefore, equalities \eqref{1.7} can be rewritten in the form
\begin{align*} 
    \cH&=\dotsb\oplus T^{*n}\wt{\cL}_0\oplus\dotsb\oplus T^*\wt{\cL}_0\oplus\wt{\cL}_0\oplus\cH_{\cF},\\
    \cH&=\cH_{\cG}\oplus\cL_0\oplus T\cL_0\oplus \dotsb\oplus T^n\cL_0\oplus \dotsb.
\end{align*}

\subsection{Characteristic function as the transfer function of the open system}

Let $\Delta$ be a unitary colligation of type \eqref{E2.5}. To the colligation $\Delta$ one can assign the open system $\cO_\Delta$ with discrete time $n = 0,1,2,3,\dotsc$ which is
defined by the relations
\begin{equation}\label{Nr.2.12}
    \left\{\begin{array}{cl}
        h(n+1) =  \ T h(n) + F f(n), \\
        \ \ g(n) \ \ \ =  \ G h(n) + S f(n).
    \end{array}\right.
\end{equation}
Here $\{f(n)\}_{n=0}^{\infty}, \ \{g(n)\}_{n=0}^{\infty}$ \ and $\{h(n)\}_{n=0}^{\infty}$ \ are sequences in $\cF, \ \cG, \ \cH$, respectively, which are interpreted as input data, output data and internal states of the system, respectively. In relations (\ref{Nr.2.12}) we assume that the input data $\{f(n)\}_{n=0}^{\infty}$ and the initial internal state $h(0)$ are known. It is required to determine the output data  $\{g(n)\}_{n=0}^{\infty}$ and the internal states $h(n)$ for all subsequent instants $n = 1, 2, 3, ... \ $. This problem can be solved in the following way. Knowing $f(0)$ and $h(0)$, we determine $h(1)$ and $g(0)$ via (\ref{Nr.2.12}). After that we determine $h(2)$ and $g(1)$ by $f(1)$ and $h(1)$ and so on. If $f(n) = 0, \ n = 0, 1, 2, ... \ $, then the contraction $T$ describes the evolution of the internal state
$h(n) = T^n h(0), \ n = 0, 1, 2, ... \ $.

Let us feed to the input of the open systems a sequence of the form $f(n) = \zeta^{-n}f,\ f \in \cF, \ n= 0, 1, 2, ... \ $. We shall seek the internal states of the same form, namely, $h(n) = \zeta^{-n}h, \ h \in \cH, \ n= 0, 1, 2, ... \ $. Then the output data have also this form, i. e., $g(n) = \zeta^{-n}g, \ g \in \cG, \ n= 0, 1, 2, ...$, and the relations (\ref{Nr.2.12}) take the form
\begin{equation}\label{Nr.2.13}
    \left\{\begin{array}{cl}
        (I-\zeta T)h \ = \ \ \zeta F f, \\
        \ \ g \ =  \ G h + S f.
    \end{array}\right.
\end{equation}

For $|\zeta|<1$ the operator $(I-\zeta T)$ is invertible. In this case,
\beql{E2.14}
g = \theta(\zeta)f, \ \ h = \zeta(I-\zeta T)^{-1}Ff,  
\eeq
where
\beql{E2.15}
\theta\rk{\zeta}\defeq  S + \zeta G(I-\zeta T)^{-1}F, \ \ \zeta\in\D. 
\eeq

\begin{defn}\label{d2.11}
    Let $\Delta$ be the unitary colligation of form  \eqref{E2.5}.
    The operator function  \eqref{E2.15} 
    is called \emph{the characteristic operator function (c.o.f) of the colligation $\Delta$}
    or \emph{the transfer function of the system} $\cO_\Delta $.
\end{defn}

Taking into account the described construction, the open system $\cO_\Delta$ can be schematically illustrated in the following way:

\begin{figure}[H]
    \centering
    \begin{tikzpicture}[decoration={
            markings,
            mark=at position 0.5 with {\arrowreversed{latex}}}
        ]
        
        \draw[postaction={decorate}] (0,0) --node[anchor=south] {$g$} (1,0);
        \draw (2,0) circle (1cm);
        \node[label=center:{$h$}] at (2,0) {};
        \node[label=below:{$\cO_\Delta$}] at (2,-1) {};
        \draw[postaction={decorate}] (3,0) --node[anchor=south] {$f$} (4,0);
    \end{tikzpicture}
    \caption{}\label{F0}
\end{figure}

Rewrite  \eqref{Nr.2.13}  as  

\[
\Mat{\zeta^{-1}h\\g} = \Mat{T&F\\ G&S} \Mat{h\\f}  
\]

Then the unitarity of the operator  $Y$ (see \eqref{1.2}) implies

\[
\  |\zeta|^{-2}|h\|^2 + \|g\|^2 = \|h\|^2 + \|f\|^2,
\]

Hence, taking into account \eqref{E2.14} , we find
\[
\|f\|^2 - \|\theta\rk{\zeta}f\|^2 = (1 - |\zeta|^2 ) \|(I-\zeta T)^{-1}Ff \|^2, \ f \in \cH, \ \zeta \in\D,  
\]
or 
\beql{E2.17}
I_{\cF} - \theta^*\rk{\zeta} \theta\rk{\zeta} = (1 - |\zeta|^2 )F^*(I-\ko\zeta T^*)^{-1}(I-\zeta T)^{-1}F, \ \ \zeta \in\D   
\eeq

Hence, the characteristic operator function  $\theta$ belongs to the Schur class
$ S[\cF,\cG]$.  In this connection, it turns out to be important that the converse statement is also true (see \cite{B}):

\begin{thm}[{\cite{B}}]\label{t1.14}
    The characteristic operator function $\theta_\Delta$ of the unitary colligation $\Delta$ belongs to the class  $S[\cF,\cG]$.
    Conversely, suppose that $\theta$ is an operator function belonging to the class   $ S[\cF,\cG]$.
    Then there exists a simple unitary colligation $\Delta$ of form \eqref{E2.5} such that $\theta$ is the characteristic operator function of $\Delta.$
\end{thm}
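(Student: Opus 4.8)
The plan is to prove the two assertions in turn; the first is essentially contained in the discussion preceding the theorem, so the substance lies in the converse, for which I would construct an explicit functional model.

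\emph{Direct part.} For $\zeta\in\D$ the Neumann series $\sum_{n\ge0}\zeta^nT^n$ converges in operator norm, because $\norm{T}\le1$ makes its radius of convergence at least $1$, and it represents $(I_\cH-\zeta T)^{-1}$; hence the right-hand side of \eqref{E2.15} is holomorphic on $\D$, so $\theta_\Delta\in H^\infty[\cF,\cG]$. Identity \eqref{E2.17}, obtained from the unitarity of $Y$, yields $I_\cF-\theta_\Delta^*(\zeta)\theta_\Delta(\zeta)\ge0$, that is $\norm{\theta_\Delta(\zeta)}\le1$ throughout $\D$; thus $\theta_\Delta\in S[\cF,\cG]$, and nothing further is needed.

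\emph{Converse.} Given $\theta\in S[\cF,\cG]$ I would realize it through the Sz.-Nagy--Foias functional model (cf.\ \cite{SN,B}). Put $\Delta(t)\defeq(I_\cF-\bv\theta^*(t)\bv\theta(t))^{1/2}$ for $t\in\T$ and consider $\cK\defeq L^2(\cG)\oplus\overline{\Delta L^2(\cF)}$, on which multiplication by the coordinate $t$ is a unitary operator $\mathbf U$; the second summand reduces $\mathbf U$ because multiplication by $t^{\pm1}$ commutes with multiplication by $\Delta$. Since $\bv\theta^*\bv\theta+\Delta^2=I_\cF$, the assignment $w\mapsto\bv\theta w\oplus\Delta w$ is an isometry of $L^2(\cF)$ into $\cK$; removing from $\cK$ the $\mathbf U^*$-invariant subspace $(H^2(\cG))^\bot\oplus\set{0}$ and the $\mathbf U$-invariant subspace $\set{\bv\theta w\oplus\Delta w:w\in H^2(\cF)}$ leaves the model subspace $\cH\defeq(H^2(\cG)\oplus\overline{\Delta L^2(\cF)})\ominus\set{\bv\theta w\oplus\Delta w:w\in H^2(\cF)}$. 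I would take $T\defeq P_\cH\mathbf U|_\cH$, let $F$ and $G$ encode the way $\mathbf U$ couples $\cH$ to the two wandering subspaces carrying copies of $\cF$ and $\cG$, and set $S\defeq\theta(0)$. Unitarity of the block matrix $Y$ then follows from the unitarity of $\mathbf U$ together with the identity $\bv\theta^*\bv\theta+\Delta^2=I_\cF$, and amounts to checking the six relations \eqref{1.3}.

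The main obstacle is the remaining identity $\theta_\Delta=\theta$, i.e.\ the transfer-function computation $S+\zeta G(I-\zeta T)^{-1}F=\theta(\zeta)$ on $\D$. Expanding $(I-\zeta T)^{-1}=\sum_{n\ge0}\zeta^nT^n$, it suffices to verify that the model forces $S=\theta_0$ and $GT^nF=\theta_{n+1}$ for the Taylor coefficients in $\theta(\zeta)=\sum_{n\ge0}\zeta^n\theta_n$, so that the two power series coincide term by term; this matching of coefficients against the action of $\mathbf U$ on $\cH$ (and in particular pinning down the correct convention, $\mathbf U$ versus $\mathbf U^*$) is the step I expect to require the most care. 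Equivalently, one may verify the companion Gramian identity $G(I-\zeta T)^{-1}(I-\bar w T^*)^{-1}G^*=(1-\zeta\bar w)^{-1}(I_\cG-\theta(\zeta)\theta^*(w))$ dual to \eqref{E2.17}. Finally, to obtain a \emph{simple} colligation I would reduce to $\cH'\defeq\cH_\cF\vee\cH_\cG$: by Corollary~\ref{cor1.2} its orthogonal complement $\cH_\cF^\bot\cap\cH_\cG^\bot$ reduces $T$ to a unitary operator, so $\cH'$ reduces $T$; and since $F(\cF)\subseteq\cH_\cF$ and $G^*(\cG)\subseteq\cH_\cG$, the map $G$ annihilates $(\cH')^\bot$ and the block matrix $Y$ splits accordingly, so that $(\cH',\cF,\cG;T|_{\cH'},F,G|_{\cH'},S)$ is again a unitary colligation whose transfer function is unchanged, the products $GT^nF$ being unaffected by the reduction. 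Its fundamental operator is completely nonunitary, so---as recalled in the text, a colligation is simple exactly when its fundamental operator is completely nonunitary---this reduced colligation is simple and still has characteristic function $\theta$.
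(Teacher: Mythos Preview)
The paper does not actually prove this theorem: the direct part is established by the discussion immediately preceding the statement (culminating in identity \eqref{E2.17}), and the converse is simply cited from \cite{B} without argument. Your treatment of the direct part therefore coincides with the paper's.

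For the converse there is nothing in the paper to compare against. Your route via the Sz.-Nagy--Foias functional model is standard and correct in outline, and is in fact the approach underlying Brodski\u{\i}'s construction in \cite{B}. Two remarks. First, the definitions of $F$ and $G$ are left vague (``encode the way $\mathbf U$ couples $\cH$ to the two wandering subspaces''); for a complete proof you would need to write them explicitly---e.g.\ $Gh$ is the zeroth Fourier coefficient of the $\cG$-component of $h$, and $Ff=P_\cH(\bv\theta f\oplus\Delta f)$ for constant $f\in\cF$---and then the identities $S=\theta_0$, $GT^nF=\theta_{n+1}$ become a direct Fourier-coefficient computation rather than an obstacle. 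Second, your final reduction to $\cH'=\cH_\cF\vee\cH_\cG$ is unnecessary: in the Sz.-Nagy--Foias model the compression $T=P_\cH\mathbf U|_\cH$ is automatically completely nonunitary (this is a basic property of the model; see \cite[Ch.~VI]{SN}), so the colligation you build is already simple.
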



\begin{defn}\label{d2.13}
    Let $\Delta_k=(\cH_k,\cF, \cG;T_k, F_k,G_k,S_k)$, $k=1,2$, be unitary colligations.
    Then $\Delta_1$ and $\Delta_2$ are called \emph{unitarily equivalent} if $S_1=S_2$ and there exists a unitary operator $Z\colon\cH_1\to\cH_2$ which satisfies $ZT_1=T_2Z$, $ZF_1=F_2$, $G_2Z=G_1$.
\end{defn}

It can be easily seen that the characteristic operator functions of unitarily equivalent colligations coincide.  It turns out that the converse is also true,
namely, the following statement holds true (see \cite{B}). 

\begin{thm}[{\cite{B}}]\label{t1.16}
    If the characteristic operator functions of two simple colligations coincide, then the colligations are unitarily equivalent.
\end{thm}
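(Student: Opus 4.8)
The plan is to reconstruct, from the common characteristic function $\theta\defeq\theta_{\Delta_1}=\theta_{\Delta_2}$, a unitary operator $Z\colon\cH_1\to\cH_2$ realizing the unitary equivalence of \rdefn{d2.13}. Since $\theta_k(0)=S_k$ by \eqref{E2.15}, equality of the characteristic functions gives $S_1=S_2$ at once, so the entire burden is the construction of $Z$ and the verification of $ZT_1=T_2Z$, $ZF_1=F_2$, and $G_2Z=G_1$.

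First I would introduce, for $k=1,2$ and $\zeta\in\D$, the resolvent families
\[
\Phi_k(\zeta)\defeq(I-\zeta T_k)^{-1}F_k\in[\cF,\cH_k],\qquad\Psi_k(\zeta)\defeq(I-\zeta T_k^*)^{-1}G_k^*\in[\cG,\cH_k].
\]
Expanding the resolvents in power series shows that $\setaca{\Phi_k(\zeta)f}{\zeta\in\D,f\in\cF}$ spans $\cH_{k,\cF}\defeq\bigvee_{n=0}^\infty T_k^nF_k(\cF)$ and $\setaca{\Psi_k(\zeta)g}{\zeta\in\D,g\in\cG}$ spans $\cH_{k,\cG}\defeq\bigvee_{n=0}^\infty T_k^{*n}G_k^*(\cG)$; since $\Delta_k$ is simple, together they span $\cH_k$. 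The crux is that all pairwise inner products of these generators depend on $\theta$ alone. For this I would use the polarized form of the energy identity \eqref{E2.17},
\[
F_k^*(I-\ko\omega T_k^*)^{-1}(I-\zeta T_k)^{-1}F_k=\frac{I_\cF-\theta^*(\omega)\theta(\zeta)}{1-\ko\omega\zeta},
\]
its dual obtained by applying \eqref{E2.17} to the adjoint colligation $\Delta_k^*=(\cH_k,\cG,\cF;T_k^*,G_k^*,F_k^*,S_k^*)$, whose characteristic function is $\theta^\sch$ (this handles the $\Psi$--$\Psi$ products), and the resolvent identity, which yields
\[
\ip{\Phi_k(\zeta)f}{\Psi_k(\omega)g}=\ip{\frac{\theta(\zeta)-\theta(\ko\omega)}{\zeta-\ko\omega}f}{g}.
\]
Hence $\ip{\Phi_1(\zeta)f}{\Phi_1(\omega)f'}=\ip{\Phi_2(\zeta)f}{\Phi_2(\omega)f'}$, and likewise for the other three combinations. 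Equality of these Gram data makes the correspondence $\Phi_1(\zeta)f\mapsto\Phi_2(\zeta)f$, $\Psi_1(\omega)g\mapsto\Psi_2(\omega)g$ well defined and isometric, extending to a unitary $Z$ of $\cH_1=\cH_{1,\cF}\vee\cH_{1,\cG}$ onto $\cH_2=\cH_{2,\cF}\vee\cH_{2,\cG}$.

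Next I would read off the intertwining relations on generators. From $\Phi_k(0)=F_k$ one gets $ZF_1=F_2$; the identity $G_k\Phi_k(\zeta)=\zeta^{-1}(\theta(\zeta)-S_k)$ together with the equality of the $\Psi$-products gives $G_2Z=G_1$. For $ZT_1=T_2Z$ it suffices to check $\ip{ZT_1u}{w}=\ip{T_2Zu}{w}=\ip{T_1u}{Z^*w}$ on generators $u\in\cH_1$, $w\in\cH_2$, where $Z^*w$ is the matching generator in $\cH_1$; so the claim reduces to the assertion that every first $T$-moment $\ip{T_k\,\cdot}{\cdot}$ of the generators is again determined by $\theta$. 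For the terms $\ip{T_k\Phi_k(\zeta)f}{\cdot}$ this follows from the resolvent relation $\zeta T_k\Phi_k(\zeta)=\Phi_k(\zeta)-F_k$ (and its analogue $\omega T_k^*\Psi_k(\omega)=\Psi_k(\omega)-G_k^*$), which reduces everything to the Gram data already secured; for the mixed $\Phi$--$\Psi$ term one additionally uses the symmetry of the second divided difference of $\theta$.

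The main obstacle is the remaining moment $\ip{T_k\Psi_k(\zeta)g}{\Phi_k(\omega)f}$, in which $T_k$ lands on a $\Psi$-generator and is paired with a $\Phi$-generator: here neither resolvent trick applies and one must genuinely invoke the full unitarity relations \eqref{1.3}. Concretely I would expand
\[
\ip{T_k\Psi_k(\zeta)g}{\Phi_k(\omega)f}=\sum_{m,n\ge0}\zeta^m\ko\omega^n\ip{F_k^*T_k^{*n}T_kT_k^{*m}G_k^*g}{f}
\]
and simplify the operator coefficients $F_k^*T_k^{*n}T_kT_k^{*m}G_k^*$ using $T_kT_k^*=I-F_kF_k^*$, $T_kG_k^*=-F_kS_k^*$, and $F_k^*T_k=-S_k^*G_k$ from \eqref{1.3}; each coefficient collapses to a polynomial in the Taylor coefficients $c_j=G_kT_k^{j-1}F_k$ and $c_0=S_k$ of $\theta$, hence is independent of $k$. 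With this last moment under control, all first $T$-moments of the two colligations coincide, so $\ip{ZT_1u}{w}=\ip{T_2Zu}{w}$ on a total set, giving $ZT_1=T_2Z$. Together with $S_1=S_2$, $ZF_1=F_2$, and $G_2Z=G_1$, this shows $\Delta_1$ and $\Delta_2$ are unitarily equivalent.
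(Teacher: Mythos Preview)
The paper does not supply a proof of this theorem; it is quoted from Brodski\u{\i}~\cite{B} and stated without argument, so there is no in-paper proof to compare against.

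Your argument is correct and follows the classical reproducing-kernel route. The three Gram identities you invoke are right: the polarized \eqref{E2.17} handles $\Phi$--$\Phi$, its analogue for the associated colligation $\Delta^\sim$ handles $\Psi$--$\Psi$, and the resolvent computation giving $G_k(I-\bar\omega T_k)^{-1}(I-\zeta T_k)^{-1}F_k=(\theta(\zeta)-\theta(\bar\omega))/(\zeta-\bar\omega)$ handles the mixed pairing. With matching Gram data and simplicity on both sides, $Z$ is well defined, isometric, and onto. The intertwining $ZF_1=F_2$ and $G_2Z=G_1$ drop out as you say, and your reduction of $ZT_1=T_2Z$ to the $\theta$-dependence of the first $T$-moments on generators is sound.

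You are also right that the single case which does not fall to the resolvent trick is $\langle T_k\Psi_k(\zeta)g,\Phi_k(\omega)f\rangle$: here neither $T_k$ nor $T_k^*$ can be absorbed into a one-variable resolvent, and one must genuinely use the full colligation identities \eqref{1.3}. Your coefficient computation works; concretely, $T_kG_k^*=-F_kS_k^*$ and $T_kT_k^*=I-F_kF_k^*$ give, for $m\ge1$,
\[
F_k^*T_k^{*n}T_kT_k^{*m}G_k^*=c_{n+m}^*-(F_k^*T_k^{*n}F_k)\,c_m^*,
\]
while $T_k^*F_k=-G_k^*S_k$ gives $F_k^*T_k^{*n}F_k=-c_n^*c_0$ for $n\ge1$ and $F_k^*F_k=I_\cF-c_0^*c_0$; the $m=0$ case is handled by $T_kG_k^*=-F_kS_k^*$ directly. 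So every coefficient is a polynomial in the Taylor coefficients of $\theta$, as you claim. One tiny point worth stating explicitly: the resolvent tricks require $\zeta\neq0$ (resp.\ $\omega\neq0$), and the excluded value is recovered by continuity since all moments are holomorphic (resp.\ anti-holomorphic) in the parameter.
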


If $\Delta$ is a unitary colligation of form \eqref{E2.5}, then
\begin{equation}
    \Delta^\sim
    \defeq\rk{\cH,\cG,\cF;T^\ad,G^\ad,F^\ad,S^\ad}
\end{equation}
is a unitary colligation as well.
It is called the colligation \emph{associated} with $\Delta$.
Note that the function $\theta^\sim \in S[\cG,\cF] $ given by $\theta^\sim(\zeta)\defeq\theta^\ad\rk{\overline{\zeta}}$ is called \emph{associated} with the function $\theta\in S[\cF,\cG]$.
Let $\theta_\Delta$ be the characteristic operator function of a unitary colligation $\Delta$.
It is readily checked that the associated function $\theta^{\sim}_\Delta$ is the characteristic function of the associated colligation $\Delta^\sim$, \tie{}, $ \theta_{\Delta^\sim}=
\theta^{\sim}_\Delta$.

\subsection{Product of unitary colligations and factorization of characteristic operator functions}
Let
\beql{E2.18}
\Dl_i=(\cH_i, \cF_i, \cG_i; T_i, F_i, G_i, S_i), \ \ i = 1,2, 
\eeq
be unitary colligations and $\theta_i = \theta_{\Dl_i}, \ i = 1,2.$ Let the input space $\cF_1$ coincide with the output space $\cG_2 : \cF_1=\cG_2$. 
Consider relations (see \eqref{Nr.2.13})

\begin{align*}
    \left\{\begin{array}{cl}
        (I_{\cH_1}-\zeta T_1)h_1 \ = \ \ \zeta F_1 f_1, \\
        \ \ g_1 \ =  \ G_1 h_1 + S_1 f_1,
    \end{array}\right.
\end{align*}

and

\begin{align*}
    \left\{\begin{array}{cl}
        (I_{\cH_2}-\zeta T_2)h_2 \ = \ \ \zeta F_2 f_2, \\
        \ \ g_2 \ =  \ G_2 h_2 + S_2 f_2,
    \end{array}\right.
\end{align*}

which define open systems $\cO_{\Delta_1}$ and $\cO_{\Delta_2}$, respectively. Here

\beql{E2.19}
\ f_i \in \cF_i, \ \ h_i \in \cH_i, \ \ g_i \in \cG_i, \ \ g_i = \theta(\zeta)f_i, \ \ i = 1,2.    
\eeq

If we put 
\beql{E2.20}
\ f_1 = g_2,     
\eeq
then we get a new open system $\cO$ with the input $f_2,$ the internal state $h:= h_1 + h_2,$ and the output $g_1.$ Substituting the expression for $g_2$ in the first system 
instead of $f_1$, we obtain the relations that determine the system  $\cO$ :

\begin{align}\label{Nr.2.21}
    \left\{\begin{array}{cl}
        (I_{\cH_1}-\zeta T_1)h_1 - \zeta F_1G_2h_2  \ = \ \  \zeta F_1S_2f_2, \\
        \ \ (I_{\cH_2}-\zeta T_2)h_2               \ = \ \ \zeta F_2f_2, \\ 
        \ \ g_1 \ =  \ G_1 h_1 + S_1G_2h_2 + S_1S_2 f_2,
    \end{array}\right.
\end{align}
or
\begin{equation}\label{Nr.2.22}
    \left\{\begin{array}{cl}
        (I-\zeta T)h \ = \ \ \zeta F f_2, \\
        \ \ g_1 \ =  \ G h + S f_2,
    \end{array}\right.
\end{equation}
where
\[
T: = \Mat{T_1&F_1G_2\\ 0&T_2} \in [\cH_1 \oplus \cH_2], \ \ 
F: =  \Mat{F_1S_2\\F_2} \in [\cF_2, \cH_1 \oplus \cH_2],         
\]

\[ 
G: = \Mat{G_1,&S_1G_2} \in [\cH_1 \oplus \cH_2, \cG_1], \ \
S: = S_1S_2 \in [\cF_2, \cG_1 ] .        
\]

Denote by $P_1$ and $P_2$ the orthogonal projections of
$\cH_1\oplus\cH_2$ onto $\cH_1$ and $\cH_2$, respectively. Then
\beql{E2.23}
T := T_1P_1 + T_2P_2 + F_1G_2P_2, \ \ \ F := F_2 + F_1S_2,    
\eeq
\beql{E2.24}
G := G_1P_1 + S_1G_2P_2, \ \ \ \ \ \ S := S_1S_2.    
\eeq

The operator
\[
\Mat{T&F\\ G&S} \in [\cH \oplus \cF_2, \cH\oplus \cG_2], \ \ \cH = \cH_1 \oplus  \cH_2,    
\]
is unitary, since
\[
\Mat{T&F\\ G&S} = \Mat{T_1&F_1G_2&F_1S_2\\ 0&T_2&F_2\\ G_1&S_1G_2&S_1S_2} =
\Mat{T_1&0&F_1\\ 0&I_{\cH_2}&0\\ G_1&0&S_1} \Mat{I_{\cH_1}&0&0\\ 0&T_2&F_2\\ 0&G_2&S_2}.      
\]
Thus the 7-tuple
\beql{E2.25}
\Dl:=(\cH, \cF, \cG; T, F, G, S),
\eeq
where $T, F$ and $G, S$ have forms \eqref{E2.23} and \eqref{E2.24}, respectively,
is a unitary colligation and $\cO = \cO_{\Delta}.$
\begin{defn}\label{d2.17} 
    Colligation \eqref{E2.25} constructed as described above 
    is called  \emph{the product of the colligations} $\Delta_1$  \emph{and} $\Delta_2$
    and it will be denoted by the symbol $\Delta_1 \Delta_2$.
    The corresponding open system  $\cO_{\Delta}$ (see \eqref{Nr.2.21})
    is called  \emph{the coupling of the open systems} $\cO_{\Delta_1}$  \emph{and} $\cO_{\Delta_2}$ 
    and it will be denoted by the symbol $\cO_{\Delta_1} \curlyvee \cO_{\Delta_2}$.  
\end{defn}
Taking this into account, the system $\cO_{\Delta} = \cO_{\Delta_1} \curlyvee \cO_{\Delta_2}$ can be schematically illustrated in the following way: 

\begin{figure}[H]
    \centering
    \begin{tikzpicture}[scale=0.8, decoration={
            markings,
            mark=at position 0.5 with {\arrowreversed{latex}}}
        ]
        
        \draw[postaction={decorate}] (0,0) --node[anchor=south] {$g_1$} (1.5,0);
        \draw (3,0) circle (1.5cm);
        \node[label=center:{$h=h_1+h_2$}] at (3,0) {};
        \node[label=below:{$\cO_{\Delta}$}] at (3,-1.5) {};
        \draw[postaction={decorate}] (4.5,0) --node[anchor=south] {$f_2$} (6,0);
        
        \node[label=center:{$=$}] at (6.5,0) {};
        
        \draw[postaction={decorate}] (7,0) --node[anchor=south] {$g_1$} (8.5,0);
        \draw (9,0) circle (0.5cm);
        \node[label=center:{$h_1$}] at (9,0) {};
        \node[label=below:{$\cO_{\Delta_1}$}] at (9,-0.5) {};
        
        \draw[postaction={decorate}] (9.5,0) --node[anchor=south] {$f_1=g_2$} (11.5,0);
        
        \draw (12,0) circle (0.5cm);
        \node[label=center:{$h_2$}] at (12,0) {};
        \node[label=below:{$\cO_{\Delta_2}$}] at (12,-0.5) {};
        \draw[postaction={decorate}] (12.5,0) --node[anchor=south] {$f_2$} (14,0);
    \end{tikzpicture}
    \caption{}\label{F2.1}
\end{figure}

By \emph{a factorization of a colligation} we mean any of its representations
as a product of two colligations. From \eqref{E2.23} it can be
seen that if $\Delta=\Delta_1\Delta_2$, then the subspace $\cH_1$ of
$\cH_1\oplus\cH_2$ is invariant with respect to $T$ and the equality
$T_1 = \rstr{T}{\cH_1}$ holds true.

\bthmnl{{\cite{B}}}{T2.18}
A colligation $\Dl=(\cH, \cF, \cG; T, F, G, S)$ admits a
factorization $\Delta = \Delta_1\Delta_2$,
where $\Delta_i,\ i=1,2,$ have form
\eqref{E2.18} if and only if
the subspace $\cH_1$ of $\cH = \cH_1\oplus\cH_2$ is invariant with respect
to the fundamental operator $T$ of the colligation $\Delta$.
\ethm
By \emph{a factorization of a function} $\theta(\zeta)$, belonging to
$S[\cF,\cG]$, we mean an arbitrary product representation
\beql{E2.26}
\theta(\zeta) = \theta_1(\zeta)\theta_2(\zeta), \ \  \zeta\in\D, 
\eeq
where $\theta_1 \in S[\cE,\cG]$ and $\theta_2 \in S[\cF,\cE]$ with some Hilbert space $\cE$.
\bthmnl{{\cite{B}}}{T2.19}
If a unitary colligation $\Delta$ admits a factorization 
$\Delta =  \Delta_1\Delta_2$, 
then the factorization 
\beql{E2.27}
\theta_\Delta(\zeta) =
\theta_{\Delta_1}(\zeta)\theta_{\Delta_2}(\zeta), \ \zeta\in\D,      
\eeq  
holds true.
\ethm
\bproof On the one hand, it follows from \eqref{Nr.2.22} that $g_1 = \theta_\Delta(\zeta)f_2, \ 
\zeta\in\D$. On the other hand, from \eqref{E2.19} and \eqref{E2.20} we have 
\begin{align*}
    g_1 = \theta_{\Delta_1}(\zeta)f_1 = \theta_{\Delta_1}(\zeta)g_2 = \theta_{\Delta_1}(\zeta)
    \theta_{\Delta_2}(\zeta)f_2, \ \ \zeta\in\D. 
\end{align*}
\eproof  
\begin{defn} \label{d2.20}
    Factorization $($\ref{E2.26}$)$ is called  \emph{regular} if the
    product $\Delta_1\Delta_2$ of two simple colligations such that
    $\theta_{\Delta_1} = \theta_1$ and
    $\theta_{\Delta_2} = \theta_2$ is a simple colligation.
\end{defn}
Note that every factorization  $\Delta =
\Delta_1\Delta_2$ of a simple colligation
$\Delta$ generates a regular factorization of form \eqref{E2.27}.

\bthmnl{{\cite{B}}}{T2.21}
Let the c.o.f. $\theta_{\Delta}(\zeta)$ of a simple colligation
$\Delta$ admit a regular factorization $\theta_\Delta(\zeta) =
\theta_1(\zeta)\theta_2(\zeta)$. Then there exists the unique factorization $\Delta =
\Delta_1\Delta_2$ such that $\theta_{\Delta_1}(\zeta) =
\theta_1(\zeta)$ and $\theta_{\Delta_2}(\zeta)=\theta_2(\zeta)$.
\ethm

\section{The Schur interpolation problem.
    V.~P.~Potapov's approach}\label{sec3-0927}

In this section, we assume that $\cF$ and $\cG$ are finite-dimensional Hilbert spaces,
and let 
\begin{align*}
    \dim\cF&=:q<+\infty,&
    \dim\cG&=:p<+\infty.
\end{align*}

\subsection{Formulation of the Schur problem}\label{subsec3.1-0927}

The well-known Schur problem \cite{Schur} for functions belonging to the class $S[\cF, \cG]$  is formulated as follows:

Given a sequence
\begin{align*}
    c_k&\in\FG ,&k&=0,1,\dotsc,n,
\end{align*}
of $(n+1)$ operators:
\benui
    \item Find a necessary and sufficient condition which ensures that $c_0,c_1,\dotsc,c_n$ are the initial coefficients in the Taylor series representation of some function $\theta\in S[\cF,\cG]$, \tie{},\ 
    \begin{align*}
        \theta\rk{\zeta}&=c_0+c_1\zeta+\dotsb+c_n\zeta^n+\dotsb,&\zeta&\in\D;
    \end{align*}
    \item If such functions exist, give a description of the set of them all.
\eenui
The approach to the study of the Schur problem considered here is based on a generalization of the well-known Schwarz--Pick inequality (see \cite{P}) for a function $\theta\in S[\cF,\cG]$.

\bthmnl{{\cite[Ch.~II]{EP}}}{T3.1}
Let $\zeta_k\in\D$, $k=1,\dotsc,n$.
Then a function $\theta$ which is holomorphic on $\D$ belongs to the class $S[\cF, \cG]$  if and only if the operator inequality
\beql{E3.1}
\Mat{
    \frac{\IH-\theta\rk{\zeta_1}\theta^\ad\rk{\zeta_1}}{1-\zeta_1\ko{\zeta_1}}&\hdots&\frac{\IH-\theta\rk{\zeta_1}\theta^\ad\rk{\zeta_n}}{1-\zeta_1\ko{\zeta_n}}&\frac{\IH-\theta\rk{\zeta_1}\theta^\ad\rk{\zeta}}{1-\zeta_1\ko{\zeta}}\\
    \vdots&&\vdots&\vdots\\
    \frac{\IH-\theta\rk{\zeta_n}\theta^\ad\rk{\zeta_1}}{1-\zeta_n\ko{\zeta_1}}&\hdots&\frac{\IH-\theta\rk{\zeta_n}\theta^\ad\rk{\zeta_n}}{1-\zeta_n\ko{\zeta_n}}&\frac{\IH-\theta\rk{\zeta_n}\theta^\ad\rk{\zeta}}{1-\zeta_n\ko{\zeta}}\\
    \frac{\IH-\theta\rk{\zeta}\theta^\ad\rk{\zeta_1}}{1-\zeta\ko{\zeta_1}}&\hdots&\frac{\IH-\theta\rk{\zeta}\theta^\ad\rk{\zeta_n}}{1-\zeta\ko{\zeta_n}}&\frac{\IH-\theta\rk{\zeta}\theta^\ad\rk{\zeta}}{1-\zeta\ko{\zeta}}
}\geq0
\eeq
is satisfied for each $\zeta\in\D$.
\ethm


\bthmnl{{\cite[Ch.~II]{EP}}}{T3.1'}
Let $\zeta_k\in\D$, $k=1,\dotsc,n$.
Then a function $\theta$ which is holomorphic on $\D$ belongs to the class $S[\cF, \cG]$  if and only if the operator inequality
\beql{E3.1'}
\Mat{
    \frac{\IH-\theta\rk{\zeta_1}\theta^\ad\rk{\zeta_1}}{1-\zeta_1\ko{\zeta_1}}&\hdots&\frac{\IH-\theta\rk{\zeta_1}\theta^\ad\rk{\zeta_n}}{1-\zeta_1\ko{\zeta_n}}&\frac{\theta\rk{\zeta}-\theta\rk{\zeta_1}}{\zeta-\zeta_1}\\
    \vdots&&\vdots&\vdots\\
    \frac{\IH-\theta\rk{\zeta_n}\theta^\ad\rk{\zeta_1}}{1-\zeta_n\ko{\zeta_1}}&\hdots&\frac{\IH-\theta\rk{\zeta_n}\theta^\ad\rk{\zeta_n}}{1-\zeta_n\ko{\zeta_n}}&\frac{\theta\rk{\zeta}-\theta\rk{\zeta_n}}{\zeta-\zeta_n}\\
    \frac{\theta^\ad\rk{\zeta}-\theta^\ad\rk{\zeta_1}}{\ko{\zeta}-\ko{\zeta_1}}&\hdots&\frac{\theta^\ad\rk{\zeta}-\theta^\ad\rk{\zeta_n}}{\ko{\zeta}-\ko{\zeta_n}}&\frac{\IF-\theta^\ad\rk{\zeta}\theta\rk{\zeta}}{1-\ko{\zeta}\zeta}
}\geq0
\eeq
is satisfied for each $\zeta\in\D\setminus\set{\zeta_1,\dotsc,\zeta_n}$.
\ethm

Note that the form of the block operator in the inequality \eqref{E3.1} indicates that this operator acts in the space $ \cG^{n+1}$, where
\[
\cG^1:=  \cG, \ \ \cG^{n+1}
\defeq \cG^n\oplus\cG.
\]
Analogously, the block operator in the inequality \eqref{E3.1'} acts in the space $\cG^n\oplus\cF$.
Inequality \eqref{E3.1'} is usually called the dual of \eqref{E3.1}.


The following result characterizes a Schur function in terms of their Taylor coefficients in the Taylor series representation around the origin.
For scalar Schur functions this result was obtained by I.~Schur {\cite[\csatz{VIII}]{Schur}}. 

\bthmnl{see, \teg{}, {\cite[\cthm{3.1.1}]{DFK92}}}{T3.3}
Let $c_k\in\FG $, $k=0,1,2,\dotsc$
Then a necessary and sufficient condition that the function $\theta\colon\D\to\ek{\cG,\cF}$ admitting the representation
\[
\theta\rk{\zeta}
\defeq c_0+c_1\zeta+c_2\zeta^2+\dotsb+c_n\zeta^n+\dotsb
\]
on $\D$ belongs to $S[\cF, \cG]$  is that the block operator
\beql{E3.2}
C_n
\defeq\Mat{
    c_0&0&\hdots&0\\
    c_1&c_0&\hdots&0\\
    \vdots&\vdots&\ddots&\vdots\\
    c_n&c_{n-1}&\hdots&c_0
}\in\ek{\cF^{n+1},\cG^{n+1}}
\eeq
is contractive for each  $n=0,1,2,\dotsc$, \tie{},
\[
I_{\cG^{n+1}}-C_nC_n^\ad
\geq0.
\]
\ethm
\bproof
Necessity:
Let  $\theta\in S[\cF,\cG]$.
We fix $\zeta\in\D$.
Set
\beql{E3.5}
\La_\cG\rk{\zeta}
\defeq\rk{I_\cG,\zeta I_\cG,\zeta^2I_\cG,\dotsc,\zeta^n I_\cG,\dotsc}.
\eeq
Then
\[
\La_\cG\rk{\zeta}\La_\cG^\ad\rk{\zeta}
=\frac{1}{1-\zeta\ko\zeta}I_\cG.
\]
From this we obtain
\beql{T3.3-2}
\frac{I_\cG-\theta\rk{\zeta}\theta^\ad\rk{\zeta}}{1-\zeta\ko\zeta}
=\La_\cG\rk{\zeta}\La_\cG^\ad\rk{\zeta}-\theta\rk{\zeta}\La_\cF\rk{\zeta}\La_\cF^\ad\rk{\zeta}\theta^\ad\rk{\zeta}.
\eeq
Because of
\[
\theta\rk{\zeta}\La_\cF\rk{\zeta}
=\La_\cG\rk{\zeta}C_\infty,
\]
where
\[
C_\infty
\defeq
\Mat{
    c_0&0&0&\hdots&0&0&\hdots\\
    c_1&c_0&0&\hdots&0&0&\hdots\\
    c_2&c_1&c_0&\hdots&0&0&\hdots\\
    \vdots&\vdots&\vdots& &\vdots&\vdots\\
    c_n&c_{n-1}&c_{n-2}&\hdots&c_0&0&\hdots\\
    c_{n+1}&c_n&c_{n-1}&\hdots&c_1&c_0&\hdots\\
    \vdots&\vdots&\vdots& &\vdots&\vdots
},
\]
identity \eqref{T3.3-2} can be rewritten as
\beql{T3.3-4}
\frac{I_\cG-\theta\rk{\zeta}\theta^\ad\rk{\zeta}}{1-\zeta\ko\zeta}
=\La_\cG\rk{\zeta}\rk{I_{\cG^{\infty}}-C_\infty C_\infty^\ad}\La_\cG^\ad\rk{\zeta},
\eeq
where

\[
\cG^{\infty} :=  
\cG\oplus\cG...\oplus\cG\oplus...
\]

Now we have to take into account that $\zeta$ varies over $\D$.

Sufficiency:
Since the operators $C_n$, $n=0,1,2,\dots$, are contractive, we obtain 
\[
c_0c_0^\ad+c_1c_1^\ad+\dotsb+c_nc_n^\ad
\leq I_\cG, \ \ n=0,1,2,\dotsc 
\]
This means that the function
\begin{align*}
    \theta\rk{\zeta}&\defeq c_0+c_1\zeta+\dotsb+c_n\zeta^n+\dotsb, \ \
    \zeta\in\D,
\end{align*}
is holomorphic on $\D$.
The belonging of $\theta$ to the class $S[\cF, \cG]$  follows now from \eqref{T3.3-4}.
\eproof

This theorem is completed by the following result:

\bthmnl{see, \teg{}, {\cite[\csec{3.5}]{DFK92}}}{T3.4}
Let the sequence  $c_k\in\FG $, $k=0,1,2,\dotsc,n,$ be such that
\begin{align}\label{E3.1000}
    I-C_nC_n^\ad\geq0\ ,
\end{align} 
where $C_n$ is given by \eqref{E3.2}.
Then there exists some function  $\theta\in S[\cF,\cG]$ with the initial Taylor coefficients $c_0,c_1,\dotsc,c_n$, \tie{},\
\begin{align}\label{E3.3}
    \theta\rk{\zeta}&=c_0+c_1\zeta+\dotsb+c_n\zeta^n+\dotsb,&\zeta&\in\D.
\end{align}
\ethm

The message of this theorem is that the operator sequence $\rk{c_k}_{k=0}^n$ given there can always be extended to an infinite sequence $\rk{c_k}_{k=0}^\infty$ such that for each $m\in\N$ the operator $C_m$ given via \eqref{E3.2} is contractive.
Thus, in view of \rthm{T3.3}, the function $\theta$ with Taylor series representation \eqref{E3.3} belongs to
$S[\cF, \cG]$ .

\rthmss{T3.3}{T3.4} provide an answer to the first question that was raised in Schur's problem.
Indeed, the condition \eqref{E3.1000}
is necessary and sufficient that the operators $c_0,c_1,\dotsc,c_n$ are the initial Taylor coefficients of some function  $\theta\in S[\cF,\cG]$.

Let the sequence  $c_k\in\FG $, $k=0,1,2,\dotsc,n,$ be such that the condition
\eqref{E3.1000} is satisfied. We will denote the set of the functions $\theta \in S[\cF, \cG] $ with Taylor series representation \eqref{E3.3} (the set of the solutions of the corresponding Schur problem) by $S[\cF, \cG;\rk{c_k}_{k=0}^n].$  

\subsection{The fundamental matrix inequalities}
The main feature of V.~P.~Potapov's approach to matricial versions of classical interpolation problems is that some matrix inequality  is associated with each such problem (see \cite{MR703593,MR409825}).
This inequality contains full information about the interpolation problem under consideration, whereas the solutions of these matrix inequalities provide a complete description of the solutions of the 
corresponding interpolation problem.

\bthmn{{\cite{G}, \cite[\cpart{I}]{D82},  \cite[Ch.~5]{DFK92}}}  
Let $n\in\NO$ and let $\seqa{c}{n}$ be a sequence from $\FG $.
Let $\theta$ be a holomorphic function on $\D$ with values in $\FG $.
Then $\theta\in S[\cF, \cG;\rk{c_k}_{k=0}^n]$ 
if and only if the matrix inequality
\beql{E3.12}
\begin{pmat}({|})
    \Iu{\cG^{(n+1)}}-C_nC_n^\ad&{\begin{gathered}
            \Iu{\cG}-c_0\theta^\ad\rk{\zeta}\cr
            \ko\zeta\rk{\Iu{\cG}-\rk{c_0+\frac{c_1}{\ko\zeta}}\theta^\ad\rk{\zeta}}\cr
            \vdots\cr
            \ko\zeta^{n+1}\rk{\Iu{\cG}-\rk{c_0+\frac{c_1}{\ko\zeta}+\dotsb+\frac{c_n}{\ko\zeta^n}}\theta^\ad\rk{\zeta}}\cr
        \end{gathered}}\cr\-
    \ast&\frac{\Iu{\cG}-\theta\rk{\zeta}\theta^\ad\rk{\zeta}}{1-\zeta\ko\zeta}\cr
\end{pmat}\geq0
\eeq
is satisfied for $\zeta\in\DO $.
\ethm

\bthmn{{\cite{G}, \cite[\cpart{I}]{D82}, \cite[Ch.~5]{DFK92}}}
Let $n\in\NO$ and let $\seqa{c}{n}$ be a sequence from $\FG $.
Let $\theta$ be a holomorphic function on $\D$ with values in $\FG $.
Then $\theta\in S[\cF, \cG;\rk{c_k}_{k=0}^n]$ 
if  and only if the 
\beql{E3.13}
\begin{pmat}({|})
    \Iu{\cG^{(n+1)}}-C_nC_n^\ad&{\begin{gathered}
            \zeta^\inv\rk{\theta\rk{\zeta}-c_0}\cr
            \zeta^{-2}\rk{\theta\rk{\zeta}-c_0-c_1\zeta}\cr
            \vdots\cr
            \zeta^{-\rk{n+1}}\rk{\theta\rk{\zeta}-c_0-c_1\zeta-\dotsb-c_n\zeta^n}\cr
        \end{gathered}}\cr\-
    \ast&\frac{\Iu{\cF}-\theta^\ad\rk{\zeta}\theta\rk{\zeta}}{1-\ko\zeta\zeta}\cr
\end{pmat}\geq0
\eeq
is satisfied for $\zeta\in\DO $.
\ethm

Inequalities  \eqref{E3.12} and  \eqref{E3.13} are called \emph{the fundamental matrix inequalities} (FMI's) of the Schur problem.
These inequalities can be obtained from inequalities 
\eqref{E3.1} and  \eqref{E3.1'}, respectively.
FMI \eqref{E3.13} is called  \emph{dual} of \eqref{E3.12}. 

We note that all results of this and the following section were obtained in \cite{G} for the case $\dim\cF=\dim\cG$.
In \zitaa{D82}{\cpart{I}} these results were generalized to the case of arbitrary finite-dimensional spaces $\cF$ and $\cG$.
A detailed treatment can be found in \zitaa{DFK92}{\cch{4 and~5}}.

The lemma of the non-negative Hermitian block matrix (see \zitaa{EP}{\cch{II}}, \zitaa{DFK92}{\csec{1.1}}) plays an important role in the course of solving of FMI's.

\bleml{L3.7}
Let $\cL$ and $\cN$ be finite-dimensional Hilbert spaces, let $A\in\ek{\cL}$, $B\in\ek{\cN,\cL}$ and $D\in\ek{\cN}$.
Then the block operator
\[
E
=
\Mat{
    A&B\\
    B^\ad&D
}\in\ek{\cL\oplus\cN}
\]
is non-negative if and only if
\begin{enuin}
    \il{L3.7-1} $A\geq0$;
    \il{L3.7-2} the equation $AX=B$ has at least one solution;
    \il{L3.7-3} for each solution $X$ of $AX=B$ the inequality
    \[
    D-X^\ad AX
    \geq0
    \]
    holds true, where $X^\ad AX$ is independent of the choice of $X$.
\end{enuin}
\elem

We note that in FMI's \eqref{E3.12} and \eqref{E3.13} the block $A$ is equal to $\Iu{\cG^{n+1}}-C_nC_n^\ad$ and contains all information about the data of the considered Schur problem because \rthm{T3.3} shows that the inequality $A\geq0$ is necessary and sufficient for the solvability of the corresponding problem.
For this reason, the block $\Iu{\cG^{n+1}}-C_nC_n^\ad$ is said to be \emph{the information block} of FMI's \eqref{E3.12} and \eqref{E3.13}.

\bdefnl{D3.8}
The Schur interpolation problem associated with the given sequence $\seqaj{c}{k}{n}$ of operators belonging to $\FG $ is said to be \emph{non-degenerate} if
\begin{align}\label{E3.16}
    \Iu{\cG^{n+1}}-C_nC_n^\ad&>0,&
\end{align}
where $C_n$ is given via \eqref{E3.2}.
It is called \emph{degenerate} if
\begin{align*}
    \Iu{\cG^{n+1}}-C_nC_n^\ad&\geq0&
    &\text{and}&
    \ker{\rk{\Iu{\cG^{n+1}}-C_nC_n^\ad}}&\neq\set{0}.
\end{align*}
\edefn

In this paper, we consider only non-degenerate Schur interpolation problems.
Degenerate Schur interpolation problems are considered analogously, however require rather complicated methods (see, \teg{},\ \zitaa{D82}{\cpart{IV}}).

In the case of the non-degenerate Schur problem, conditions~\eqref{L3.7-1} and~\eqref{L3.7-2} are automatically satisfied, and the inequality $E\geq0$ is equivalent to the inequality
\beql{E3.17}
D-B^\ad A^\inv B
\geq0.
\eeq
In accordance with \eqref{E3.5}, let
\[
\La_{\cG,n}\rk{\zeta}
\defeq\rk{I_\cG,\zeta I_\cG,\zeta^2I_\cG,\dotsc,\zeta^n I_\cG}, \ n = 0,1,2,\dotsc \ .
\]
Then in the case of the FMI \eqref{E3.13} we have
\begin{align}
    B&:=
    \Mat{
        \zeta^\inv\rk{\theta\rk{\zeta}-c_0}\\
        \zeta^{-2}\rk{\theta\rk{\zeta}-c_0-c_1\zeta}\\
        \vdots\\
        \zeta^{-n-1}\rk{\theta\rk{\zeta}-c_0-c_1\zeta-\dotsb-c_n\zeta^n}
    },\label{E3.19}\\
    D&:=\frac{\Iu{\cG}-\theta^\ad\rk{\zeta}\theta\rk{\zeta}}{1-\ko\zeta\zeta}
    =\frac{1}{1-\ko\zeta\zeta}\matb{\theta^\ad\rk{\zeta},\Iu{\cF}}\tilde{j}\Mat{\theta\rk{\zeta}\\\Iu{\cF}}, \label{E3.20}
\end{align}
where
\beql{E3.21}
\tilde{j}
\defeq
\Mat{
    -\Iu{\cG}&0\\
    0&\Iu{\cF}
}.
\eeq
For this reason, in the non-degenerate case FMI \eqref{E3.13} is equivalent to the inequality \eqref{E3.17} which in this case has the form
\begin{multline}\label{E3.22}
\matb{\theta^\ad\rk{\zeta},\Iu{\cF}}\\
\times\rkb{\frac{1}{1-\ko\zeta\zeta}\tilde{j}-\tilde{j}\Mat{\La_{\cG,n}\rk{\frac{1}{\ko\zeta}}&0\\0&\La_{\cF,n}\rk{\frac{1}{\ko\zeta}}}\tilde{H}_n\Mat{\La^\ad_{\cG,n}\rk{\frac{1}{\ko\zeta}}&0\\0&\La^\ad_{\cF,n}\rk{\frac{1}{\ko\zeta}}}\tilde{j}}\\
\times\Mat{\theta\rk{\zeta}\\\Iu{\cF}}
\geq0
\end{multline}
for $\zeta\in\D$, where
\[
\tilde{H}_n
\defeq
\Mat{\Iu{\cG^{n+1}}\\C_n^\ad}\rk{\Iu{\cG^{n+1}}-C_nC_n^\ad}^\inv\mat{\Iu{\cG^{n+1}},C_n}.
\]
Analogously, we obtain that in the non-degenerate case FMI \eqref{E3.12} is equivalent to the inequality
\begin{multline}\label{E3.24}
\matb{\theta\rk{\zeta},\Iu{\cG}}\\
\times\rkb{\frac{1}{1-\zeta\ko\zeta}j-j\Mat{\La_{\cF,n}\rk{\zeta}&0\\0&\La_{\cG,n}\rk{\zeta}}H_n\Mat{\La^\ad_{\cF,n}\rk{\zeta}&0\\0&\La^\ad_{\cG,n}\rk{\zeta}}j}\\
\times\Mat{\theta^\ad\rk{\zeta}\\\Iu{\cG}}
\geq0
\end{multline}
for $\zeta\in\D$, where
\beql{E3.25}
j
\defeq
\Mat{
    -\Iu{\cF}&0\\
    0&\Iu{\cG}
}
\eeq
and
\beql{E3.26}
H_n
\defeq\Mat{C_n^\ad\\\Iu{\cG^{n+1}}}\rk{\Iu{\cG^{n+1}}-C_nC_n^\ad}^\inv\mat{C_n,\Iu{\cG^{n+1}}}.
\eeq
The inequalities \eqref{E3.22} and \eqref{E3.24} are equivalent to FMI's \eqref{E3.12} and \eqref{E3.13}, respectively.
They are solved with the aid of the so-called \tJ{elementary} multiple factor.
These questions will be handled in the next Section.

Now we turn our attention to the structure of the blocks $A$, $B$ and $D$ of FMI's \eqref{E3.12} and \eqref{E3.13}.
As we noted already earlier, the block $A=\Iu{\cG^{n+1}}-C_nC_n^\ad$ contains all the information on the data of the corresponding Schur problem.
The block $D$ has the form
\begin{align*}
    &\frac{\Iu{\cG}-\theta\rk{\zeta}\theta^\ad\rk{\zeta}}{1-\zeta\ko\zeta}&
    &\text{and}&
    &\frac{\Iu{\cF}-\theta^\ad\rk{\zeta}\theta\rk{\zeta}}{1-\ko\zeta\zeta}
\end{align*}
for FMI \eqref{E3.12} and \eqref{E3.13}, respectively.
The condition $D\geq0$ for $\zeta\in\D$ ensures that the function $\theta$ belongs to the Schur class $\SFD{\cF}{\cG}$.
Finally, the block $B$ is responsible for the fact that the function $\theta$ has the desired initial $n+1$ Taylor coefficients.
For example, what concerns FMI \eqref{E3.13}, this follows immediately from inequality \eqref{E3.17}.
In order to see this, it is sufficient to pass to the limit in this inequality as $\zeta\to0$ and to take into account that the blocks $A$ and $D$ of shape \eqref{E3.20} are bounded.
Hence, on tending $\zeta\to0$, the block $B$ of shape \eqref{E3.19} has to be bounded.

\subsection{$J$-contractive operator functions in the Schur problem.
    Solution of the fundamental matrix inequalities}


Let $\cE$ be a finite-dimensional Hilbert space of dimension $N$ and $J$ a signature operator on $\cE$,  \tie{},\
\[
J
=J^\ad
=J^\inv.
\]
We consider an analytic operator function $\sB$ with values in $\ek{\cE}$ and with exactly one pole of arbitrary multiplicity in the extended complex plane which is located depending on the situation at the point $\zeta=0$ or $\zeta=\infty$.
Furthermore, we assume that the function $\sB$ is $J$\nobreakdash-expansive in the unit disk and $J$\nobreakdash-unitary on its boundary, namely,
\begin{align}
    \sB^\ad\rk{\zeta}J\sB\rk{\zeta}-J&\geq0,&\abs{\zeta}&<1,\label{E3.27}\\
    \sB^\ad\rk{\zeta}J\sB\rk{\zeta}-J&=0,&\abs{\zeta}&=1,\label{E3.25a}
\end{align}
or, equivalently (see, \teg{}, \cite[Lemma~1.3.6.]{DFK92}),
\begin{align*}
    \sB\rk{\zeta}J\sB^\ad\rk{\zeta}-J&\geq0,&\abs{\zeta}&<1,\\
    \sB\rk{\zeta}J\sB^\ad\rk{\zeta}-J&=0,&\abs{\zeta}&=1.
\end{align*}
Such an operator function is called a \emph{$J$\nobreakdash-elementary multiple factor, $J$\nobreakdash-expansive on the unit disk}.
In the sequel, $J$\nobreakdash-elementary multiple factors $\sB$ will be normalized as the identity operator at the point $\zeta=1$, \tie{},\
\[
\sB\rk{1}
=\Iu{\cE}.
\]

We associate with inequality \eqref{E3.13} the operator $J:=\tilde{j} \in [\cE],$ 
$\cE := \cG \oplus \cF$, of form \eqref{E3.21}
which naturally appears by the transformation of FMI \eqref{E3.13} 
to equivalent inequality \eqref{E3.22}. 
Applying the same argumentation, we associate with inequality \eqref{E3.12} the operator 
$J:=j  \in [\cE], \ \cE := \cF \oplus \cG $, of form \eqref{E3.25}.

Let $J:=j$, of form \eqref{E3.25} and let $\sB_n$ be a $j$\nobreakdash-elementary multiple factor with a single pole at the point $\zeta=0$, \tie{},\
\beql{E1213}
\sB_n\rk{\zeta}
:=\frac{d_{n+1}}{\zeta^{n+1}}+\dotsb+\frac{d_1}{\zeta}+d_0.
\eeq
Then from \eqref{E3.27} and shape \eqref{E3.25} of the operator $j$ it follows that $\dim\rk{\ran\rk{d_{n+1}}}\leq p$ ($=\dim\cG$).
In the case when this dimension is equal to $p$, we call $\sB_n$ a $J$\nobreakdash-\emph{elementary multiple factor of full rank}.

The connection of $j$\nobreakdash-elementary multiple factors with the Schur problem will be clear from the following result:

\bthmnl{{on a parametrization) ( \cite{G}, \cite[\cpart{I}]{D82}}}{T3.9}
Let $\sB_n$ be a $j$\nobreakdash-elementary multiple factor of form \eqref{E1213} of full rank.
Then there is a uniquely determined sequence $\seqa{c}{n}$ of operators belonging to $\FG $ such that condition \eqref{E3.16} is satisfied
and such that $\sB_n$ admits the representation
\begin{multline}\label{E3.33}
    \sB_n\rk{\zeta}
    =\Mat{\Iu{\cF}&0\\0&\Iu{\cG}}\\
    +\frac{1-\zeta}{\zeta}j\Mat{\La_{\cF,n}\rk{1}&0\\0&\La_{\cG,n}\rk{1}}H_n\Mat{\La^\ad_{\cF,n}\rk{\frac{1}{\ko\zeta}}&0\\0&\La^\ad_{\cG,n}\rk{\frac{1}{\ko\zeta}}},\\
    n = 0,1,2,\dotsc,
\end{multline}
where  $ H_n$ is the operator of form \eqref{E3.26} 

Conversely, every function $\sB_n$ of this form is a $j$\nobreakdash-elementary multiple factor of full rank and its $j$\nobreakdash-form is given by
\begin{multline}\label{E3.28}
    \frac{1}{1-\zeta\ko\zeta}\rkb{\sB_n^\ad\rk{\zeta}j\sB_n\rk{\zeta}-j}\\
    =\frac{1}{\zeta\ko\zeta}\Mat{\La_{\cF,n}\rk{\frac{1}{\ko\zeta}}&0\\0&\La_{\cG,n}\rk{\frac{1}{\ko\zeta}}}H_n\Mat{\La^\ad_{\cF,n}\rk{\frac{1}{\ko\zeta}}&0\\0&\La^\ad_{\cG,n}\rk{\frac{1}{\ko\zeta}}}.
\end{multline}
\ethm

For solving FMI \eqref{E3.13} we need a $\tilde j$\nobreakdash-elementary multiple factor of full rank with a pole at $\zeta=\infty$, namely
\begin{align}\label{E3.29}
    \tilde \sB_n\rk{\zeta}
    &:=\tilde d_{n+1}\zeta^{n+1}+\dotsb+\tilde d_1\zeta+\tilde d_0, \ \ \ \
    \dim\rk{\ran\rk{\tilde d_{n+1}}}= p.
\end{align}
Evidently, $\sB^\ad\rk{\frac{1}{\ko\zeta}}$ is a $\rk{-\tilde j}$\nobreakdash-elementary multiple factor of full rank with a pole at $\zeta=0$.
Therefore, it follows from \rthm{T3.9} that the $c_k\in\FG $, $k=0,1,\dotsc,n,$ are uniquely determined so that
\begin{multline*}
    \theta^\ad\rkb{\frac{1}{\ko\zeta}}
    =\Mat{\Iu{\cF}&0\\0&\Iu{\cG}}\\
    -\frac{1-\zeta}{\zeta}\tilde j\Mat{\La_{\cF,n}\rk{1}&0\\0&\La_{\cG,n}\rk{1}}\tilde H_n\Mat{\La_{\cF,n}\rk{\frac{1}{\ko\zeta}}&0\\0&\La_{\cG,n}\rk{\frac{1}{\ko\zeta}}},
\end{multline*}
where
\[
\tilde{H}_n
=
\Mat{\Iu{\cG^{n+1}}\\C_n^\ad}\rk{\Iu{\cG^{n+1}}-C_nC_n^\ad}^\inv\mat{\Iu{\cG^{n+1}},C_n}.
\]
Hence, we obtain
\begin{multline}\label{E1214}
    \tilde \sB_n\rk{\zeta}
    =\Mat{\Iu{\cG}&0\\0&\Iu{\cF}}\\
    +\rk{1-\zeta}\Mat{\La_{\cG,n}\rk{\zeta}&0\\0&\La_{\cF,n}\rk{\zeta}}\tilde H_n\Mat{\La_{\cG,n}^\ad\rk{\zeta}&0\\0&\La_{\cF,n}^\ad\rk{\zeta}},\\
    n = 0,1,2,\dotsc\ .
\end{multline}
Consequently, we have the following conclusion.

\bthmnl{{on a parametrization) ( \cite{G}, \cite[\cpart{I}]{D82}}}{T3.10}
Let $\tilde \sB_n$ be a $j$\nobreakdash-elementary multiple factor of form \eqref{E3.29} of full rank.
Than there is a uniquely determined sequence $\seqa{c}{n}$ of operators belonging to $\FG $ such that condition \eqref{E3.16} is satisfied and such that $\tilde \sB_n$ admits representation \eqref{E1214}.

Conversely, every function $\tilde \sB_n$ of this form is a $\tilde j$\nobreakdash-elementary multiple factor of full rank and its $\tilde j$\nobreakdash-form is given by
\beql{E3.32}
\frac{1}{1-\zeta\ko\zeta}\rkb{\tilde \sB_n\rk{\zeta}\tilde j\tilde \sB_n^\ad\rk{\zeta}-\tilde j}
=\Mat{\La_{\cG,n}\rk{\zeta}&0\\0&\La_{\cF,n}\rk{\zeta}}\tilde H_n\Mat{\La_{\cG,n}^\ad\rk{\zeta}&0\\0&\La_{\cF,n}^\ad\rk{\zeta}}.
\eeq
\ethm

A full description of the set of solutions of FMI's \eqref{E3.12} and \eqref{E3.13} in the non-degenerate case is given in the following two theorems.

\bthmnl{{\cite{G}, \cite[\cpart{I}]{D82}}}{T3.11}
Let a sequence $\seq{c_k}{k}{0}{n}$ of operators belonging to $\FG $ satisfy condition \eqref{E3.16} and let $\sB_n$ be the $j$\nobreakdash-elementary multiple factor of form \eqref{E3.33}.
In accordance with \eqref{E3.33}, we consider the block partition
\beql{E3.33a}
\sB_n
=
\Mat{
    a_n&b_n\\
    c_n&d_n
}.
\eeq
Then the general solution $\theta$ of FMI \eqref{E3.12} admits a description as the linear fractional transformation
\begin{align}\label{E3.34}
    \theta\rk{\zeta}&=\rkb{w\rk{\zeta}b_n\rk{\zeta}+d_n\rk{\zeta}}^\inv\rkb{w\rk{\zeta}a_n\rk{\zeta}+c_n\rk{\zeta}},&\zeta&\in\DO ,
\end{align}
where a parameter function $w$ runs over $S[\cF,\cG]$.
\ethm

\bthmnl{{\cite{G}, \cite[\cpart{I}]{D82}}}{T3.12}
Let a sequence $\seq{c_k}{k}{0}{n}$ of operators belonging to $\FG $ satisfy condition \eqref{E3.16} and let $\tsB_n$ be the $\tilde j$\nobreakdash-elementary multiple factor
of form \eqref{E1214}.
In accordance with \eqref{E1214}, we consider the block partition
\beql{E3.35}
\tsB_n
=
\Mat{
    \tilde a_n&\tilde b_n\\
    \tilde c_n&\tilde d_n
}.
\eeq
Then the general solution $\theta$ of FMI \eqref{E3.13} admits a description as linear fractional transformation
\begin{align}\label{E3.36}
    \theta\rk{\zeta}&=\rkb{\tilde a_n\rk{\zeta}w\rk{\zeta}+\tilde b_n\rk{\zeta}}\rkb{\tilde c_n\rk{\zeta}w\rk{\zeta}+\tilde d_n\rk{\zeta}}^\inv,&\zeta&\in\D,
\end{align}
where a parameter function $w$ runs over $ S[\cF,\cG]$.
\ethm

For the proofs of \rthmss{T3.11}{T3.12}, we note that from \eqref{E3.25a} it follows
\begin{align*}
    \sB_n\rk{\zeta}&=J\sB_n^{\ad\inv}\rkb{\frac{1}{\ko\zeta}}J,&\abs{\zeta}&=1.
\end{align*}
Since on both sides of the equality the functions are analytic,  we obtain
\begin{align*}
    \sB_n\rk{\zeta}&=J\sB_n^{\ad\inv}\rkb{\frac{1}{\ko\zeta}}J,&\zeta&\in\C\setminus\set{0}.
\end{align*}
From this and \eqref{E3.28} we get
\begin{multline}\label{E3.37}
    \frac{\sB_n^\inv\rk{\zeta}j\sB_n^{\ad\inv}\rk{\zeta}}{1-\zeta\ko\zeta}\\
    =\frac{1}{1-\zeta\ko\zeta}j-j\Mat{\La_{\cF,n}\rk{\zeta}&0\\0&\La_{\cG,n}\rk{\zeta}}H_n\Mat{\La^\ad_{\cF,n}\rk{\zeta}&0\\0&\La^\ad_{\cG,n}\rk{\zeta}}j.
\end{multline}
Analogously, from \eqref{E3.32} it follows that
\begin{multline}\label{E3.38}
    \frac{\tsB_n^{\ad\inv}\rk{\zeta}\tilde j\tilde \sB_n^\inv\rk{\zeta}}{1-\ko\zeta\zeta}\\
    =\frac{1}{1-\ko\zeta\zeta}\tilde j-\tilde j\Mat{\La_{\cG,n}\rk{\frac{1}{\ko\zeta}}&0\\0&\La_{\cF,n}\rk{\frac{1}{\ko\zeta}}}\tilde H_n\Mat{\La_{\cG,n}^\ad\rk{\frac{1}{\ko\zeta}}&0\\0&\La_{\cF,n}^\ad\rk{\frac{1}{\ko\zeta}}}\tilde j.
\end{multline}
For this reason, inequalities \eqref{E3.24}, \eqref{E3.22} and, hence, also FMI's \eqref{E3.12}, \eqref{E3.13} are equivalent to the inequalities
\begin{align}
    \matb{\theta\rk{\zeta},\Iu{\cG}}\frac{\sB_n^\inv\rk{\zeta}j\sB_n^{\ad\inv}\rk{\zeta}}{1-\zeta\ko\zeta}\Mat{\theta^\ad\rk{\zeta}\\\Iu{\cG}}&\geq0,&\zeta\in\D,\label{E3.39}\\
    \matb{\theta^\ad\rk{\zeta},\Iu{\cF}}\frac{\tsB_n^{\ad\inv}\rk{\zeta}\tilde j\tilde\sB_n^\inv\rk{\zeta}}{1-\ko\zeta\zeta}\Mat{\theta\rk{\zeta}\\\Iu{\cF}}&\geq0,&\zeta\in\DO ,\label{E3.40}
\end{align}
respectively.
We describe the set of solutions of inequality \eqref{E3.40}.

Let $\theta$ be a holomorphic function on $\D$ with values in $\FG $ which satisfies this inequality.
We define a pair of holomorphic operator functions 
$p$ and $q$ on $\D$ with values in $[\cF,\cG]$ and $[\cF]$, respectively, 
by setting
\begin{align}\label{E3.41}
    \Mat{p\rk{\zeta}\\q\rk{\zeta}}&\defeq\tilde\sB_n^\inv\rk{\zeta}\Mat{\theta\rk{\zeta}\\\Iu{\cF}},&\zeta&\in\D.
\end{align}
Now inequality \eqref{E3.40} can be rewritten as
\begin{align}\label{E3.42}
    p^\ad\rk{\zeta}p\rk{\zeta}&\leq q^\ad\rk{\zeta}q\rk{\zeta},&\zeta&\in\D.
\end{align}
From this and \eqref{E3.41} the invertibility of $q\rk{\zeta}$ follows for $\zeta\in\D$.
However,  then it follows from \eqref{E3.42} that the function $w$ given by
\begin{align*}
    w\rk{\zeta}&\defeq p\rk{\zeta}q^\inv\rk{\zeta},&\zeta&\in\D,
\end{align*}
belongs to $S[\cF,\cG]$. 
Taking into account block partition \eqref{E3.35}, we rewrite \eqref{E3.41} as
\begin{align*}
    \theta\rk{\zeta}&=\tilde a_n\rk{\zeta}p\rk{\zeta}+\tilde b_n\rk{\ze}q\rk{\ze},\\
    \Iu{\cF}&=\tilde c_n\rk{\zeta}p\rk{\zeta}+\tilde d_n\rk{\ze}q\rk{\ze}.
\end{align*}
Thus,
\[\begin{split}
    \theta\rk{\ze}
    &=\rkb{\tilde a_n\rk{\ze}p\rk{\ze}+\tilde b_n\rk{\ze}q\rk{\ze}}\rkb{\tilde c_n\rk{\ze}p\rk{\ze}+\tilde d_n\rk{\ze}q\rk{\ze}}^\inv\\
    &=\rkb{\tilde a_n\rk{\ze}w\rk{\ze}+\tilde b_n\rk{\ze}}\rkb{\tilde c_n\rk{\ze}w\rk{\ze}+\tilde d_n\rk{\ze}}^\inv
\end{split}\]
and formula \eqref{E3.36} is proved.

Conversely, let $w\in S[\cF,\cG]$.
We consider the pair of holomorphic operator functions $r$ and $s$ on $\D$ with values in $[\cF,\cG]$ and $[\cF]$, respectively, which is defined by
\begin{align}\label{E3.43}
    \Mat{r\rk{\zeta}\\s\rk{\zeta}}&\defeq\tilde\sB_n\rk{\zeta}\Mat{w\rk{\zeta}\\\Iu{\cF}},&\zeta&\in\D.
\end{align}
For $\zeta\in\D$ we conclude from the $\tilde j$\nobreakdash-expanding property of $\sB_n$ (see \eqref{E3.27}) that
\beql{E3.44}\begin{split}
    \matb{r^\ad\rk{\ze},s^\ad\rk{\ze}}\tilde j\Mat{r\rk{\ze}\\s\rk{\ze}}
    &=\matb{w^\ad\rk{\ze},\Iu{\cF}}\tsB_n^\ad\rk{\ze}\tilde j\tsB_n\rk{\ze}\Mat{w\rk{\ze}\\\Iu{\cF}}\\
    &\geq\matb{w^\ad\rk{\ze},\Iu{\cF}}\tilde j\Mat{w\rk{\ze}\\\Iu{\cF}}
    =\Iu{\cF}-w^\ad\rk{\ze}w\rk{\ze}
    \geq0.
\end{split}\eeq
This means
\begin{align*}
    s^\ad\rk{\zeta}s\rk{\zeta}&\geq r^\ad\rk{\zeta}r\rk{\zeta},&\zeta&\in\D.
\end{align*}
As above, we obtain now that $s\rk{\ze}$ is invertible for $\ze\in\D$.
The function $\theta\defeq rs^\inv$ is holomorphic on $\D$ and admits the representation \eqref{E3.36}.
As it follows from \eqref{E3.43} and \eqref{E3.44}, the function $\theta$ satisfies inequality \eqref{E3.40} and, consequently, inequality \eqref{E3.13} as well.
\rthm{T3.12} is completely proved.
Analogously, \rthm{T3.11} can be proved using inequality \eqref{E3.39}.
We note that in \cite{Dub83} a parametrization of a $j$\nobreakdash-elementary (\tresp{}\ $\tilde j$\nobreakdash-elementary) multiple factor of a non-full rank was given.
This parametrization enabled us to state a description of the set of solutions of the degenerate Schur problem in a form analogous to \eqref{E3.34} (\tresp{}\ \eqref{E3.36}), where the parameter $w\in S[\cF,\cG]$ has to satisfy a special condition (see \cite[\cch{IV}]{D98}).

Let
\[
\tsB_n\gk{w}
\defeq\rk{\tilde a_nw+\tilde b_n}\rk{\tilde c_nw+\tilde d_n}^\inv.
\]
Then \eqref{E3.36} can be rewritten in the form
\[
\theta
=\tsB_n\gk{w},
\]
where, as above, any function $w\in S[\cF,\cG]$ can be taken as a parameter.

\subsection{Stepwise solution of Schur problem.
    Schur parameters}\label{S3.4}

In \cite[\cpart{I}]{D82}, adapting the remarkable idea of I.~Schur \cite{Schur}, the Schur problem is solved recursively assuming that condition \eqref{E3.16} is satisfied.

\paragraph{1.}
We first treat the Schur problem in the simplest case when only the constant term $c_0$ is given.
It follows from \rthm{T3.12} that the operator functions $\theta\in S[\cF,\cG]$ having the operator $c_0$ as constant term in the power series representation
\[
\theta\rk{\ze}
=c_0+\dotsb
\]
are determined by the formula
\beql{E3.46}
\theta
:=\tilde b_0\gk{w},
\eeq
where $\tilde b_0$ has form \eqref{E1214} and $w\in S[\cF,\cG]$.

\paragraph{2.}
Now we look amongst so obtained solutions $\theta$ for those that have the second given coefficient $c_1$ in the power series representation
\[
\theta\rk{\ze}
=c_0+c_1\ze+\dotsb
\]
Each such function $\theta$ admits a representation of form \eqref{E3.46}.
Let us describe the set of parameters $w$ which is obtained in this way.
It is convenient to set $\theta_1\defeq w$ and let the power series of $\theta_1$ be given by
\[
\theta_1\rk{\ze}
=c_0^{\rk{1}}+\dotsb.
\]
Then if we rewrite \eqref{E3.46} in the form
\beql{E3.47}
\theta
=\tilde b_0\gk{\theta_1},
\eeq
it follows that
\[
\rkb{\Iu{\cG}+c_1c_0^\ad\rk{\Iu{\cG}-c_0c_0^\ad}^\inv}c_0^{\rk{1}}
=\rkb{\Iu{\cG}+c_1c_0^\ad\rk{\Iu{\cG}-c_0c_0^\ad}^\inv}c_0+c_1.
\]
We find from \eqref{E3.16} that the operator
\[
\Iu{\cG}+c_1c_0^\ad\rk{\Iu{\cG}-c_0c_0^\ad}^\inv
\]
has an inverse.
Therefore,
\beql{E3.48}
c_0^{\rk{1}}
:=c_0+\rkb{\Iu{\cG}+c_1c_0^\ad\rk{\Iu{\cG}-c_0c_0^\ad}^\inv}^\inv c_1.
\eeq
Hence, we obtain
\beql{E3.49}
\Iu{\cG}-c_0^{\rk{1}}\rk{c_0^{\rk{1}}}^\ad
>0.
\eeq
Thus, the solution of the Schur problem with two given coefficients in the power series representation
\[
\theta\rk{\ze}
=c_0+c_1\ze+\dotsb
\]
and under the assumption
\[
\Iu{\cG^2}-C_1C_1^\ad
>0
\]
can be described by \eqref{E3.47}, where $\theta_1\rk{\ze}=c_0^{\rk{1}}+\dotsb,$ and $c_0^{\rk{1}}$ is determined by \eqref{E3.48} and satisfies condition \eqref{E3.49}.

If we describe, as above, all functions $\theta_1\in S[\cF,\cG]$ of the form $\theta_1\rk{\ze}=c_0^{\rk{1}}+\dotsb$ with condition \eqref{E3.49}, we obtain a representation of $\theta_1$ in the form of a linear fractional transformation of the form
\[
\theta_1
=\tilde b_1\gk{\theta_2},
\]
where $\tilde b_1$ has form \eqref{E1214} with $c_0$ replaced by $c_0^{\rk{1}}$.

The superposition
\begin{align*}
    \theta&=\tilde b_0\gkb{\tilde b_1\gk{\theta_2}},&\theta_2&\in S[\cF,\cG]
\end{align*}
with coefficient block operator $\tilde b_0\tilde b_1$ yields the general form of an operator function $\theta\in S[\cF,\cG]$ with two given coefficients in the power series representation
\[
\theta\rk{\ze}
=c_0+c_1\ze+\dotsb
\]

\paragraph{3.}
Evidently, we can continue this procedure.
After $n+1$ steps we obtain the following result:
The general form of an operator function $\theta\in S[\cF,\cG]$ with prescribed $n+1$ initial coefficients in the power series representation
\[
\theta\rk{\ze}
=c_0+c_1\ze+\dotsb+c_n\ze^n+\dotsb
\]
with condition \eqref{E3.16} is the superposition
\[
\theta
=\tilde b_0\gkb{\tilde b_1\gkb{\dotso \tilde b_n\gk{\theta_{n+1}}\dotso }}
\]
with an arbitrary $\theta_{n+1}\in S[\cF,\cG]$ as a parameter, where
\beql{E3.50}
\tilde b_k\rk{\ze}
:=\Iu{\cG}+\rk{1-\ze}\Mat{\Iu{\cG}\\\rk{c_0^{\rk{k}}}^\ad}\rkb{\Iu{\cG}-c_0^{\rk{k}}\rk{c_0^{\rk{k}}}^\ad}^\inv\mat{\Iu{\cG},c_0^{\rk{k}}}
\eeq
and $c_0^{\rk{0}}:=c_0$.
The coefficient block operator of the resulting linear fractional transformation is
\beql{E3.51}
\prodr_{k=0}^{n}\tilde b_k.
\eeq
Consequently, to the Schur problem with condition \eqref{E3.16} there corresponds the product \eqref{E3.51} of binomial factors \eqref{E3.50} of full rank.

The converse statement is also valid.
To a given product \eqref{E3.51} of binomial factors \eqref{E3.50} of full rank there corresponds a non-degenerate Schur problem.

Thus, a Schur problem with strictly positive information block \eqref{E3.16} is equivalent to finding finite product \eqref{E3.51} of binomial $\tilde j$\nobreakdash-elementary factors of full rank with poles at $\ze=\infty$.

We note that the operators $c_0^{\rk{0}},c_0^{\rk{1}},\dotsc,c_0^{\rk{n}}$ are called \emph{the Schur parameters} of the sequence $c_0,c_1,\dotsc,c_n$.

\paragraph{4.} 
We turn our attention to the connection between product \eqref{E3.51} and the multiple factors $\tsB_n$ corresponding to the Schur problem.
Product \eqref{E3.51} is a $\tilde j$\nobreakdash-elementary multiple factor of full rank.
In view of parametrization \rthm{T3.10}, such an operator function can be represented in form \eqref{E1214}, namely:
\begin{multline*}
    \prodr_{k=0}^{n}\tilde b_k\rk{\ze}
    =\Mat{\IH&0\\0&\IF}\\
    +\rk{1-\ze}\Mat{\La_{\cG,n}\rk{\ze}&0\\0&\La_{\cF,n}\rk{\ze}}\tilde H_n\Mat{\La_{\cG,n}^\ad\rk{\ze}&0\\0&\La_{\cF,n}^\ad\rk{\ze}}\tilde j,
\end{multline*}
where the operator $\tilde H_n$ is determined by the operators $c_0,c_1,\dotsc,c_n$ which coincide with the data of the problem.
Therefore,
\[
\prodr_{k=0}^{n}\tilde b_k\rk{\ze}
=\tsB_n\rk{\ze}.
\]
In particular, this connection enables us, by means of formula~\eqref{E3.50}, to express the binomial factors $\tilde b_k$, $k=0,1,\dotsc,n$, constructed in terms of the Schur parameters directly in terms of the data of the problem.

\paragraph{5.}
We have been starting from FMI~\eqref{E3.13}.
If we start from FMI~\eqref{E3.12}, we obtain a similar result in terms of $j$\nobreakdash-elementary factors of the form
\beql{E3.52}
b_k\rk{\ze}
=\IH+\frac{1-\ze}{\ze}j\Mat{\rk{c_0^{\rk{k}}}^\ad\\\IH}\rkb{\Iu{\cG}-c_0^{\rk{k}}\rk{c_0^{\rk{k}}}^\ad}\mat{c_0^{\rk{k}},\IH},
\eeq
$k=0,1,\dotsc,n$, where $c_0^{\rk{0}},c_0^{\rk{1}},\dotsc,c_0^{\rk{n}}$ are the same Schur parameters, as above.
In particular, the non-degenerate Schur problem (see condition \eqref{E3.16}
of non-degeneracy) is equivalent to finding a finite product
\[
\prodl_{k=0}^{n}b_k.
\]
Moreover,
\beql{E3.53}
\sB_n
=\prodl_{k=0}^{n}b_k.
\eeq

We note that an essential feature of the approach created by V.~P.~Potapov is the possibility of solving the interpolation problem not only by a stepwise procedure but also in one step with the aid of a multiple $J$\nobreakdash-elementary factor (in Schur problem $J=j$ or $J=\tilde j$) dependent on the concrete form of the FMI.

The stepwise procedure in the degenerate Schur problem was studied in the paper \cite{MR889722}.
In view of the obtained results it is possible to establish a bijective correspondence between operator functions $\theta\in S[\cF,\cG]$, where
\[
\theta\rk{\ze}
=c_0+c_1\ze+\dotsb+c_n\ze^n+\dotsb
\]
for which all the information blocks
\begin{align*}
    &\Iu{\cG^{n+1}}-C_nC_n^\ad,&n&=0,1,2,\dotsc,
\end{align*}
are strictly positive and infinite products of binomial factors \eqref{E3.50} (\tresp{}\ \eqref{E3.52}) of full rank
\begin{align*}
    \prodr_{k=0}^{\infty}&\tilde b_k&\text{(\tresp{}\ }\prodl_{k=0}^{\infty}&b_k\text{).}
\end{align*}
Conversely, to every infinite product of factors of the form \eqref{E3.50} (\tresp{}\ \eqref{E3.52}) there corresponds a unique operator function $\theta\in S[\cF,\cG]$.
In the Schur problem the Blaschke--Potapov products are always divergent.
A precise description of their divergence can be expressed in terms of the semi-radii of the operator Weyl balls which are discussed in the next section.

\subsection{Operator Weyl balls associated with the Schur interpolation problem}\label{subsec3.5-0927}
The theory of operator Weyl balls associated with the non-degenerate Schur problem was worked out in \cite[Part~{II}]{D82}.
In this Section, we follow this paper.
A detailed treatment is in \cite[\cch{5}]{DFK92}.
The theory of operator Weyl balls associated with the degenerate Schur problem was treated in \cite{MR1219936}.
As it was shown in Section 3.3, fundamental matrix inequalities \eqref{E3.12} and \eqref{E3.13} are equivalent to 
inequalities \eqref{E3.39} and \eqref{E3.40}, respectively.
Against to this background, we introduce the operator functions
\begin{align}
    W_n&\defeq\sB_n^\inv j\sB_n^{\ad\inv}, \ \ \ \ \ \zeta \in \D,   \label{E3.54}
    \intertext{and}
    \tilde W_n&\defeq\tsB_n^{\ad\inv}\tilde j\tsB_n^\inv, \ \ \ \ \ \zeta \in \DO. \label{E3.55}
\end{align}
Following \cite{MR703593}, we call $W_n$ and $\tilde W_n$ \emph{the Weyl operator functions}.
Similarly, as in \eqref{E3.33a} and \eqref{E3.35}, we choose for $W_n$ and $\tilde W_n$ the block forms
\begin{align}\label{E3.56}
    W_n&:=\Mat{-R_n&S_n^\ad\\S_n&-T_n},&
    \tilde W_n&:=\Mat{-\tilde R_n&\tilde S_n\\\tilde S_n^\ad&-\tilde T_n}.
\end{align}
with respect to the decompositions $\cF\oplus\cG$ and $\cG\oplus\cF$, respectively. 
From \eqref{E3.54} and \eqref{E3.37} it follows that
\begin{align}\label{E3.57}
    R_n\rk{\ze}&=\IF+\rkb{1-\abs{\ze}^2}\La_{\cF,n}\rk{\ze}C_n^\ad\rk{\Iu{\cG^{n+1}}-C_nC_n^\ad}^\inv C_n\La_{\cF,n}^\ad\rk{\ze},&\ze&\in\D .
\end{align}
Analogously, from \eqref{E3.55} and \eqref{E3.38} we get
\begin{equation}\label{E3.58}
    \tilde R_n\rk{\ze}
    =\IG+\frac{1-\abs{\ze}^2}{\abs{\ze}^2}\La_{\cG,n}\rkb{\frac{1}{\ko\ze}}\rk{\Iu{\cG^{n+1}}-C_nC_n^\ad}^\inv\La_{\cG,n}^\ad\rkb{\frac{1}{\ko\ze}},
    \quad\ze\in\DO .
\end{equation}
From \eqref{E3.57} and \eqref{E3.58} we see that the operator functions $R_n$ and $\tilde R_n$ are strictly positive since
\begin{align}\label{E3.59}
    R_n\rk{\ze}&\geq\IF,&
    \tilde R_n\rk{\ze}&>\IG
\end{align}
for $\ze\in\DO $.
It follows from \eqref{E3.37} and \eqref{E1214} that
\[
\tsB_n
=Q^\inv j\sB_n^\inv jQ,
\]
where
\[
Q
:=\Mat{0&\IF\\\IG&0}\colon\cG\oplus\cF\to\cF\oplus\cG.
\]
Thus, we obtain
\[
\tilde W_n^\inv
= - Q^\inv jW_n jQ.
\]
Passing to the block forms \eqref{E3.56}, we obtain the equality
\[
-\Mat{T_n&S_n\\S_n^\ad&R_n}\Mat{\tilde R_n&-\tilde S_n\\-\tilde S_n^\ad&\tilde T_n}
=\Mat{\IG&0\\0&\IF}.
\]
Thus, using \eqref{E3.59}, we obtain the equalities
\begin{gather}
    R_n^\inv=\tilde S_n^\ad\tilde  R_n^\inv\tilde S_n-\tilde T_n,\label{E3.60}\\
    \tilde R_n^\inv=S_nR_n^\inv S_n^\ad-T_n,\label{E3.61}\\
    \tilde R_n^\inv\tilde S_n=S_nR_n^\inv.\label{E3.62}
\end{gather}
In the present case all the conditions which are necessary for the construction of the theory of the operator Weyl balls \cite[\csec{2}]{MR703593} are satisfied.
In particular:
\begin{enumerate}
    \item Every operator function $W_n$ is representable in the form
    \[
    W_n
    =\sB_n^\inv j\sB_n^{\ad\inv}
    =\Mat{-R_n&S_n^\ad\\S_n&T_n},
    \]
    where $\sB_n$ is a non-singular operator function which is $j$\nobreakdash-expansive on the unit disk. 
    \item The $j$\nobreakdash-forms $j-\sB_n^\inv\rk{\ze}j\sB_n^{\ad\inv}\rk{\ze}$ are monotonically non-decreasing for $\ze\in\D $ as $n$ increases.
    (This follows from the equality $\sB_{n+1}\rk{\ze}=b_{n+1}\rk{\ze}\sB_{n}\rk{\ze}$ (see \eqref{E3.53}).)
    \item The block $R_n\rk{\ze}$ is non-singular on the unit disk.
    
    Therefore, as in \cite{MR703593}, we can prove the following result:
\end{enumerate}

\bthmnl{{\cite[Part~{II}]{D82}}}{T3.13}
Let $n\in\NO$ and let a sequence of operators $c_k\in\FG $, $k=0,\dotsc,n$, satisfy \eqref{E3.16}.
Further, let $\ze_0\in\D$.
Then the set
\[
K_n\rk{\ze_0}
\defeq\setacab{\theta\rk{\ze_0}}{\theta\in S[\cF, \cG; \seqaj{c}{k}{n}]}
\]
fills the operator Weyl ball

\begin{align}\label{E3.63}
    \{M_n\rk{\ze_0}&+\rho_{\ell,n}^\varsqrt\rk{\ze_0}u\rho_{r,n}^\varsqrt\rk{\ze_0}:\ \ u\in\FG, \ \ u^\ad u\leq\IF \}, 
\end{align} 
with the center $M_n\rk{\ze_0}:=S_n\rk{\ze_0}R_n^\inv\rk{\ze_0}$, the left semi-radius
\[
\rho_{\ell,n}\rk{\ze_0}
:=S_n\rk{\ze_0}R_n^\inv\rk{\ze_0}S_n^\ad\rk{\ze_0}-T_n\rk{\ze_0},
\]
and the right semi-radius $\rho_{r,n}\rk{\ze_0}:=R_n^\inv\rk{\ze_0}$.

With the increasing parameter $n$ the balls $K_n\rk{\ze_0}$ are nested whereas the left and right semi-radii $\rho_{\ell,n}\rk{\ze_0}$ and $\rho_{r,n}\rk{\ze_0}$ are monotonically decreasing.
\ethm

Here the operator Weyl balls are written in terms of the blocks of $W_n\rk{\ze_0}$.
Taking into account \eqref{E3.60}--\eqref{E3.62}, an analogous description can be given in terms of blocks of $\tilde W_n\rk{\ze_0}$.
According to \rthm{T3.13}, to each operator functions $\theta\in S[\cF,\cG]$ with
\[
\te\rk{\ze}
=c_0+c_1\ze+\dotsb+c_n\ze^n+\dotsb, \ \ \ze\in\D,
\]
for which condition \eqref{E3.16} is satisfied for $n=0,1,2,\dotsc$ and to each $ \ze\in\D$
there corresponds the unique sequence of nested operator Weyl balls
\[
K_0\rk{\ze}
\supseteq K_1\rk{\ze}
\supseteq\dotsb
\supseteq K_n\rk{\ze}
\supseteq\dotsb.
\]
Letting $n\to\infty$, we obtain \emph{the limit Weyl ball}
\begin{align*}
    K_{\infty}\rk{\ze}&:= \{M_\infty\rk{\ze}+\rho_{\ell,\infty}^\varsqrt\rk{\ze}u\rho_{r,\infty}^\varsqrt\rk{\ze}: \
    u\in\FG,\
    u^\ad u\leq\IF\},
\end{align*}
where
\begin{align*}
    M_\infty\rk{\ze}&:=\lim_{n\to\infty}M_n\rk{\ze},&
    \rho_{\ell,\infty}\rk{\ze}&:=\lim_{n\to\infty}\rho_{\ell,n}\rk{\ze},&
    \rho_{r,\infty}\rk{\ze}&:=\lim_{n\to\infty}\rho_{r,n}\rk{\ze}.
\end{align*}
Note that in the present case
\begin{align}\label{E3.65}
    \rho_{\ell,\infty}\rk{\ze}&=0,&
    \ze&\in\D.
\end{align}
Indeed, from \rthm{T3.13} and equality \eqref{E3.58} and \eqref{E3.61} we obtain
\[\begin{split}
    \rho_{\ell,n}^\inv\rk{\ze}
    &=\tilde R_n\rk{\ze}\\
    &\geq\IG+\frac{1-\abs{\ze}^2}{\abs{\ze}^2}\La_{\cG,n}\rkb{\frac{1}{\ko\ze}}\La_{\cG,n}^\ad\rkb{\frac{1}{\ko\ze}}
    =\frac{1}{\abs{\ze}^{2n+2}}\IG,
    \qquad\ze\in\DO.
\end{split}\]
From this \eqref{E3.65} follows.
This agrees completely with the fact that the ``infinite'' Schur problem has a unique solution.
Taking additionally into account Shmul\cprime yan \cite{MR0273377}, we see that, for each $\ze\in\D$, the limit Weyl ball $ K_{\infty}\rk{\ze}$ shrinks to the single point  $\theta(\zeta)$ since its left semi-radius is the null operator.
However, the specific nature of the operator Schur problem is distinctive in the fact that, although the left limit semi-radius $\rho_{\ell,\infty}=0$, the limit Weyl ball $K_\infty\rk{\ze}$ is characterized by its right semi-radius $\rho_{r,\infty}\rk{\ze}$ which, as it will be shown below, is different from zero.
The subsequent discussion is directed on verifying a connection between the semi-radii of the operator Weyl balls associated with the functions $\te\in\SFFG$ and $\theta^\sim\in S[\cG,\cF]$.
Hence, if necessary, the semi-radii of the operator Weyl ball $ K_n\rk{\ze}, \ze\in\D,$ associated with $\te$ will be denoted by $\rho_{\ell,n}\rk{\ze,\te}$ and $\rho_{r,n}\rk{\ze,\te}$.

\bdefnnl{{\cite[Part~{II}]{D82}}}{D3.14}
The operator function $\hat\rho_{\ell,n}$ defined by
\begin{align*}
    \hat\rho_{\ell,n}\rk{\ze,\te}&\defeq\frac{1}{\abs{\ze}^{2n+2}}\rho_{\ell,n}\rk{\ze,\te},&
    \ze&\in\DO,
\end{align*}
will be called \emph{the normalized left semi-radius}.
\edefn

The importance of this function is based on the following result.

\bthmnl{{\cite[Part~{II}]{D82}}}{T3.15}
For $n=0,1,2,\dotsc$ 
\begin{align}\label{E3.67}
    \rho_{r,n}\rk{\ze, \theta^\sim}&=\hat\rho_{\ell,n}\rk{\ko \ze,\te},&
    \ze&\in\DO,
\end{align}
holds true.
\ethm

Equalities \eqref{E3.67} gives us the possibility to define $\hat\rho_{\ell,n}\rk{\ze,\te}$ at $\ze=0$ by setting $\hat\rho_{\ell,n}\rk{0,\te}\defeq\rho_{r,n}\rk{0,\theta^\sim}$.
Moreover, it follows from \eqref{E3.67} that there exists the limit
\begin{align*}
    \hat\rho_{\ell,\infty}\rk{\ze,\te}&\defeq\lim_{n\to\infty}\hat\rho_{\ell,n}\rk{\ze,\te},&
    \ze&\in\D.
\end{align*}
and we have the equality
\begin{align}\label{E3.68}
    \rho_{r,\infty}\rk{\ze, \theta}&=\hat\rho_{\ell,\infty}\rk{\ko \ze, \theta^\sim},&
    \ze&\in\D.
\end{align}
Following \cite[Part~{II}]{D82}, the operator function $\hat\rho_{\ell,\infty}$ will be called \emph{the normalized left semi-radius} of the limit Weyl ball.

\bthmnl{{\cite[Part~{II}]{D82}}}{T3.16}
For $n=0,1,2,\dotsc$ 
\begin{align*}
    \det\rho_{r,n}\rk{\ze,\te}&=\det\hat\rho_{\ell,n}\rk{\ze,\te},&
    \ze&\in\D,
\end{align*}
holds true.
\ethm

Here and in the sequel, if $A$ is a linear operator in a finite-dimensional linear space, then the notation $\det A$ stands for the product of its eigenvalues with taking into account their multiplicities.

\bcorl{C3.17}
The identities
\begin{align}\label{E3.69}
    \det\rho_{r,\infty}\rk{\ze,\te}&=\det\hat\rho_{\ell,\infty}\rk{\ze,\theta}
    =\det\rho_{r,\infty}\rk{\ko\ze, \theta^\sim},&
    \ze\in\D,
\end{align}
hold true.
\ecor

It follows from the fundamental Orlov theorem \cite{MR0425671} (see also \cite{MR1468778}) that, in particular, $\rank\rho_{r,\infty}\rk{\ze,\te}$ and therefore also $\rank\rho_{\ell,\infty}\rk{\ze,\te^\sim}$ is independent of the choice of the point $\ze\in\D$.
Therefore, $\rank\rho_{r,\infty}\rk{\ze,\te}$ and $\rank\hat\rho_{\ell,\infty}\rk{\ze,\te}$ are constants on $\D$ and in computing them the point $\ze\in\D$ can be chosen arbitrarily.
Consequently, there arises a possibility of classifying ``infinite'' Schur problems and at the same time of classifying functions $\te\in\SFFG$ by means of the values of the ranks of the limit semi-radii $\hat\rho_{\ell,\infty}\rk{\ze,\te}$ and $\rho_{r,\infty}\rk{\ze,\te}$.
In this connection, we mention the following result:

\bthmnl{{\cite[Part~{II}]{D82}}}{T3.18}
Let $\te\in\SFFG$.
Then $\rank\rho_{r,\infty}\rk{\ze,\te}=\dim\cF$ if and only if $\rank\hat\rho_{\ell,\infty}\rk{\ze,\te}=\dim\cG$.
\ethm

In the paper \cite{MR647177} an important algorithm was worked out which enables to choose the Taylor coefficient sequence $\seqaj{c}{k}{\infty}$ of the Schur function $\te\in\SFFG$ in order to construct semi-radii with a priori given fixed ranks.
The application of these methods to the Schur problem leads to the result which complements \rthm{T3.18}.

\bthmnl{{\cite[Part~II]{D82}, \cite{MR1219936}}}{T3.19}
Let $\dim\cF=q$, $\dim\cG=p$, and $\min\set{p,q}\geq1$.
Then for all integers $\alpha,\beta$ satisfying $0\leq\alpha\leq p-1$, $0\leq\beta\leq q-1$ there exists a function $\te\in\SFFG$ for which $\rank\hat\rho_{\ell,\infty}\rk{\ze,\te}=\alpha$ and $\rank\rho_{r,\infty}\rk{\ze,\te}=\beta$ are satisfied.
\ethm

In the paper \cite[Part~II]{D82}, this result was obtained for the case of the non-degenerate ``infinite'' Schur problem (condition \eqref{E3.16} is satisfied for all $n=0,1,2,\dotsc$) whereas this restriction was removed in \cite{MR1219936}.
Furthermore, it was shown in \cite{MR1219936} that in the case of the degenerate ``infinite'' Schur problem the semi-radii $\rank\hat\rho_{\ell,\infty}\rk{\ze,\te}$ and $\rank\rho_{r,\infty}\rk{\ze,\te}$ degenerate.
This means the full ranks of the semi-radii $\hat\rho_{\ell,\infty}\rk{\ze,\te}$ and 
$\rho_{r,\infty}\rk{\ze,\te}$ are only possible in the non-degenerate situation.

\breml{R3.20}
It follows from our discussion that the operator Weyl ball \eqref{E3.63} is conveniently written in the form
\begin{align*}
    \{M_n\rk{\ze}&+\abs{\ze}^{n+1}\hat\rho_{\ell,n}^\varsqrt\rk{\ze}u\rho_{r,n}^\varsqrt\rk{\ze}: \
    u\in\FG, \
    u^\ad u\leq\IF\}.
\end{align*}
\erem 

Here we introduced the factor $\abs{\ze}^{n+1}$ because of which the ball $K_\infty\rk{\ze}$ shrinks to a point.

\bdefnnl{{\cite[Part~II]{D82}}}{D3.21}
We call the numbers
\begin{align}
    \de_\te&\defeq\rank\rho_{r,\infty}\rk{\ze,\te},\label{E3.71}\\
    \de_{\te^\sim}&\defeq\rank\rho_{r,\infty}\rk{\ze,\theta^\sim}\label{E3.72}
\end{align}
\emph{the defect numbers} of the operator function $\te\in\SFFG$.
\edefn

\subsection{Factorization of the semi-radii of the limit Weyl ball, connection with maximal shift and co-shift contained in a contractive operator}\label{s3.6}
Let $\te\in\SFFG$ and
\beql{E3.73}
\Dl
=\rk{\cH,\cF,\cG;T,F,G,S}
\eeq
be a simple unitary colligation such that $\te_\Dl=\te$.
The following statement establishes the connection between the semi-radii of the limit Weyl ball in the Schur problem associated with the function $\te$ and the largest shift and coshift contained in the contraction $T$.   

\bthmnl{{\cite[\cparts{V,VI}]{D82}}, \cite{MR1483807}}{T3.22}
Let $\cL_0$ and $\tcL_0$ be generating wandering subspaces for the largest shifts $V_T$ and $V_{T^\ad}$, respectively.
Then
\begin{align}
    \rho_{r,\infty}\rk{\ze,\te}&=\ter^\ad\rk{\ze}\ter\rk{\ze},& \ze \in \D,\label{E3.74}\\
    \hat\rho_{\ell,\infty}\rk{\ze,\te}&=\tel\rk{\ze}\tel^\ad\rk{\ze},& \ze \in \D,\label{E3.75}
\end{align}
where $\rho_{r,\infty}\rk{\ze,\te}$ and $\hat\rho_{\ell,\infty}\rk{\ze,\te}$ are the right and normalized left semi-radii of the limit Weyl ball, respectively,
\begin{align}
    \ter\rk{\ze}&\defeq P_{\cL_0}\rk{I_{\cH}-\ze T}^\inv F,&\ze&\in\D,\label{E3.76}\\
    \tel\rk{\ze}&\defeq \rstr{G\rk{I_{\cH}-\ze T}^\inv}{\tcL_0},&\ze&\in\D,\label{E3.77} 
\end{align}
are $[\cF, \cL_0]$- and $[\tcL_0, \cG]$-valued Schur functions, respectively,
and $P_{\cL_0}$ 
is the orthoprojection from $\cH$ onto $\cL_0$.
Here $\cL_0$ and $\tcL_0$ are the generating wandering subspaces for the largest shifts $V_T$ and $V_{T^\ad}$ of the contraction $T$, respectively.
\ethm

We note that in \cite[\cparts{V,VI}]{D82} factorizations \eqref{E3.74} and \eqref{E3.75} were obtained for the non-degenerate ``infinite'' Schur problem associated with the function $\te$.
In the degenerate case these factorizations were obtained in \cite{MR1483807}.

\bdefnnl{{\cite[\cpart{VI}]{D82}, \cite{BD4}}}{D3.23}
Let $\te\in\SFFG$ and let $\Dl$ be the simple unitary colligation of form \eqref{E3.73}.
The operator functions $\ter$ and $\tel$ (see \eqref{E3.76} and \eqref{E3.77}) are called \emph{the right and left defect functions of the function $\te$ in the Schur class}, respectively.
\edefn

From \eqref{E3.74} and \eqref{E3.75} we obtain
\begin{align*}
    \rho_{r,\infty}\rk{0,\te}&=F^\ad P_{\cL_0}F,&
    \hat\rho_{\ell,\infty}\rk{0,\te}&=GP_{\tcL_0}G^\ad.
\end{align*}
From this and \rcor{cor1.9} we obtain the following statement.

\bthmnl{{\cite[\cpart{III}]{D82}}}{T3.24}
The identities
\begin{align}\label{E3.78}
    \rank\rho_{r,\infty}\rk{\ze,\te}&=\dim\cL_0,&
    \rank\hat\rho_{\ell,\infty}\rk{\ze,\te}&=\dim\tcL_0, &\ze\in\D, 
\end{align}
hold true.
\ethm

From \eqref{E3.71}, \eqref{E3.72} and \eqref{E3.78} we conclude

\bcorl{C3.25}
$\dl_\te=\dim\cL_0$ and $\dl_{\te^\sim}=\dim\tcL_0$.
\ecor

\bthmnl{{\cite[\cpart{III}]{D82}}}{T3.26}
Let $T$ be a completely non-unitary contraction with finite defect numbers $\dl_{T^\ad}=q$ and $\dl_T=p$ which acts in the Hilbert space $\cH$.
Let $\cL_0$ and $\tcL_0$ be the generating wandering subspaces for the largest shifts $V_T$ and $V_{T^\ad}$, respectively.
Then
\begin{align}\label{E3.79}
    0&\leq\dim\cL_0\leq q,&
    0&\leq\dim\tcL_0\leq p.
\end{align}
Moreover, the following two cases take place:
\begin{enumerate}
    \item[\textnormal{(1)}] $\dim\cL_0=p$ if and only if $\dim\tcL_0=q$;
    \item[\textnormal{(2)}] if $\min\rk{p,q}\geq1$, then all remaining cases in the inequalities
    \begin{align*}  
        0\leq\dim\cL_0\leq q-1, \ \ \ \ 0\leq\dim\tcL_0\leq p-1
    \end{align*}  
    are possible. 
\end{enumerate}

\ethm
\bproof
We embed the contraction $T$ in the unitary colligation of form \eqref{E3.73} such that $\dim\cF=q$ and $\dim\cG=p$.
For example, the choices \eqref{1.100} and \eqref{1.101} can be taken.
Now inequalities \eqref{E3.79} follow from \rcor{cor1.9} whereas the remaining assertions of the theorem are consequences of \rthmss{T3.24}{T3.19}.
\eproof

\breml{R3.27}
Following the paper \cite{MR1483807}, we present an operator interpretation of the fact that the function $\rank\rho_{r,\infty}\rk{\ze,\te}$ (\tresp{}\ $\rank\hat\rho_{\ell,\infty}\rk{\ze,\te}$, see \eqref{E3.68}) is constant on $\D$.
Indeed, if
\begin{align*}
    \te\rk{\ze}&=S+\ze G\rk{\IH-\ze T}^\inv F,&\ze&\in\D,
\end{align*}
is the characteristic function of the simple unitary colligation \eqref{E3.73}, then it is immediately checked that the analogous representation for the function
\begin{align*}
    \te_a\rk{\ze}&=\te\rkb{\frac{\ze+a}{1+\ko{a}\ze}}\in\SFFG,&a&\in\D,
\end{align*}
has the form
\begin{align*}
    \te_a\rk{\ze}&=S_a+\ze G_a\rk{\IH-\ze T_a}^\inv F_a,&\ze&\in\D,
\end{align*}
where the operators
\begin{align*}
    T_a&=\rk{T-\ko{a}\IH}\rk{\IH-aT}^\inv,&
    F_a&=\sqrt{1-\abs{a}^2}\rk{\IH-aT}^\inv F,\\
    G_a&=\sqrt{1-\abs{a}^2}G\rk{\IH-aT}^\inv,&
    S_a&=S-a\rk{\IH-aT}^\inv F
\end{align*}
generate a unitary colligation.
As it is shown in \cite[\cch{1}]{SN}, the linear fractional transformation $T\mapsto T_a$ maps isometric operators to isometric ones and unitary operators to unitary ones.
Taking into account that
\[
\bigvee_{k=0}^\infty T_a^{\ad k}G_a^\ad\rk{\cG}
=\bigvee_{k=0}^\infty T^{\ad k}G^\ad\rk{\cG}
=\cH_\cG,
\]
one can conclude that the largest shift $V_T$ contained in $T$ passes over to the largest shift $V_{T_a}$ contained in $T_a$.
These shifts act in the same subspace $\cH_\cG^\bot=\cH\ominus\cH_\cG$ and their multiplicities coincide.
We note that
\begin{align*}
    T^k\cL_0&\perp\ran F,&k&=1,2,3,\dotsc
\end{align*}
Indeed, we have
\[
FF^\ad T^k\cL_0
=\rk{\IH-TT^\ad}T^k\cL_0
=T^\ad\rk{\IH-T^\ad T}T^{k-1}\cL_0
=T^\ad G^\ad GT^{k-1}\cL_0
=0.
\]
Hence, taking into account the factorization \eqref{E3.74} and the decomposition \eqref{E2.1002}, we obtain
\[
\rho_{r,\infty}\rk{0,\te}
=F^\ad P_{\cL_0} F
=F^\ad\sum_{k=0}^\infty T^kP_{\cL_0}T^{\ad k}F
=F^\ad P_{\cH_\cG^\bot}F.
\]
Analogously, we get
\[\begin{split}
    \rho_{r,\infty}\rk{0,\te_a}
    &=F_a^\ad P_{\cH_\cG^\bot}F_a
    =\rkb{1-\abs{a}^2}F^\ad\rk{\IH-\ko{a}T^\ad}^\inv P_{\cH_\cG^\bot}\rk{\IH-aT}^\inv F\\
    &=\rkb{1-\abs{a}^2}F^\ad\sum_{k=0}^\infty\ko{a}^kT^{\ad k}\sum_{n=0}^\infty T^nP_{\cL_0}T^{\ad n}\sum_{j=0}^\infty a^jT^jF\\
    &=\rkb{1-\abs{a}^2}\sum_{n=0}^\infty\sum_{k=0}^\infty\sum_{j=0}^\infty\ko{a}^ka^jF^\ad T^{\ad k} T^nP_{\cL_0}T^{\ad n} T^jF\\
    &=\rkb{1-\abs{a}^2}\sum_{n=0}^\infty\sum_{k=n}^\infty\sum_{j=n}^\infty\ko{a}^ka^jF^\ad T^\ad T^nP_{\cL_0}T^{\ad n} T^jF\\
    &=\rkb{1-\abs{a}^2}\sum_{n=0}^\infty\sum_{k=n}^\infty\sum_{j=n}^\infty\ko{a}^ka^jF^\ad\rk{T^\ad}^{k-n}P_{\cL_0}T^{j-n}F\\
    &=\rkb{1-\abs{a}^2}\sum_{n=0}^\infty\sum_{k=0}^\infty\sum_{j=0}^\infty\ko{a}^{k+n}a^{j+n}F^\ad T^{\ad k}P_{\cL_0}T^jF\\
    &=F^\ad\rk{\IH-\ko{a}T^\ad}^\inv P_{\cL_0}\rk{\IH-aT}^\inv F
    =\rho_{r,\infty}\rk{a,\te}.
\end{split}\]
Now, using \rthm{T3.24}, we obtain that the constancy of the function $\rank\rho_{r,\infty}\rk{\ze,\te}$ on $\D$ follows from the constancy of the multiplicity of the largest shift $V_{T_a}, a\in \D$, as noted above. 

In addition, on the way we have proved the identity
\begin{align}\label{E3.80}
    \ter^\ad\rk{a}\ter\rk{a}&=\rkb{1-\abs{a}^2}F^\ad\rk{\IH-\ko{a}T^\ad}^\inv P_{\cH_\cG^\bot}\rk{\IH-aT}^\inv F,&
    a&\in\D.
\end{align}
\erem

\section{Defect functions, unitary couplings and the Darlington synthesis}\label{sec4-1003}
\subsection{Defect functions and their properties}\label{subsec4.1}

Let $\te\in\SFFG$ and let
\beql{E4.1}
\Dl
\defeq\rk{\cH,\cF,\cG;T,F,G,S}
\eeq
be a simple unitary colligation such that $\te_\Dl=\te$.
Let $\cL_0$ and $\tcL_0$ be the wandering generating subspaces for the largest shifts $V_T$ and $V_{T^*}$, respectively.
We consider the \tval{$\ek{\cF,\cL_0}$} function $\ter$ defined on $\D$ by
\begin{align}\label{E4.2}
    \ter\rk{\ze}
    &\defeq P_{\cL_0}\rk{\IH-\ze T}^\inv F,&
    \ze&\in\D,
\end{align}
(see \eqref{E3.76}) and the \tval{$\ek{\tcL_0,\cG}$} function $\tel$ defined on $\D$ by
\begin{align}\label{E4.3}
    \tel\rk{\ze}
    &\defeq\rstr{G\rk{\IH-\ze T}^\inv}{\tcL_0},&
    \ze&\in\D,
\end{align}
(see \eqref{E3.77}).
Since
\begin{align*}
    \te^\sch\rk{\ze}
    &\defeq\te^\ad\rk{\ko\ze}
    =S^\ad+\ze F^\ad\rk{\IH-\ze T^\ad}^\inv G^\ad,&
    \ze&\in\D,
\end{align*}
then
\begin{align}\label{E4.4}
    \rk{\te^\sch}_r\rk{\ze}
    &=P_{\tcL_0}\rk{\IH-\ze T^\ad}^\inv G^\ad
    =\te^\ad_\ell\rk{\ko\ze}
    =\rk{\tel}^\sch\rk{\ze},&
    \ze&\in\D.
\end{align}

\bthmnl{\zitaa{D82}{\cpart{VI}}}{T4.1}
Let $\te\in\SFFG$ and let $\Dl$ be a simple unitary colligation of form \eqref{E4.1} such that $\te_\Dl=\te$.
Then the identities
\begin{multline}
    \IF-\te^\ad\rk{\ze}\te\rk{\ze}-\ter^\ad\rk{\ze}\ter\rk{\ze}\\
    =\rkb{1-\abs{\ze}^2}F^\ad\rk{\IH-\ko\ze T^\ad}^\inv P_{\cH_\cG}\rk{\IH-\ze T}^\inv F,\qquad\ze\in\D,\label{E4.5}
\end{multline}
and
\begin{multline}
    \IG-\te\rk{\ze}\te^\ad\rk{\ze}-\tel\rk{\ze}\tel^\ad\rk{\ze}\\
    =\rkb{1-\abs{\ze}^2}G\rk{\IH-\ze T}^\inv P_{\cH_\cF}\rk{\IH-\ko\ze T^\ad}^\inv G^\ad,\qquad\ze\in\D,\label{E4.6} 
\end{multline}
are valid.
\ethm
\bproof
Identity \eqref{E4.5} follows from \eqref{E2.17} and \eqref{E3.80}
since for the case $\dim\cF=\infty$ it can be proved in a similar way. Identity \eqref{E4.6} follows from \eqref{E4.5} for the function $\te^\sch\in\SFGF$ and from \eqref{E4.4}
\eproof

\bcornl{\zitaa{D82}{\cpart{VI}}, \cite{BD4}}{C4.2}
The functions $\ter$ and $\tel$ given by \eqref{E4.2} and \eqref{E4.3} are Schur functions, namely,
\begin{align*}
    \ter&\in\SF{\cF}{\cL_0},&
    \tel&\in\SF{\tcL_0}{\cG}.
\end{align*}
\ecor

The concept of the defect functions in the Schur class which was considered for the matrix case in \rsubsec{s3.6} (see \rdefn{D3.23}) can be extended to the general case.

\bdefnnl{\zitaa{D82}{\cpart{VI}}, \cite{BD4}}{D4.3}
Let $\te\in\SFFG$.
The operator functions $\ter$ and $\tel$ given by \eqref{E4.2} and \eqref{E4.3} are called \emph{the right} and \emph{left defect functions of $\te$ in the Schur class}, respectively.
\edefn

Note that in \cite{BD4} (see also \cite{DM}), outgoing from $\Dl$, unitary colligations $\Dl_r$ and $\Dl_\ell$ are constructed such that $\te_{\Dl_r}=\te_r$ and $\te_{\Dl_\ell}=\tel$ are satisfied.
Starting with the functions $\ter$ and $\tel$, we can continue this procedure and obtain the functions
\begin{align*}
    \terr&\defeq\rk{\ter}_r,&
    \terl&\defeq\rk{\ter}_\ell,&
    \telr&\defeq\rk{\tel}_r,&
    \tell&\defeq\rk{\tel}_\ell.
\end{align*}
After that the process can be continued further in this way.
In \cite{BDFK1} it was shown that $\terr$ (resp.\ $\tell$) is a right (resp.\ left) regular divisor of $\te$.
Furthermore, the following statement is true.

\bthmnl{\zitaa{BDFK1}{\cthm{6}}}{T4.4}
Let $\te\in\SFFG$ and let $\Dl$ be a simple unitary colligation of form \eqref{E4.1} such that $\te_\Dl=\te$.
Then to the right regular divisor $\terr$ of $\te$ there corresponds the smallest subspace of $\cH$ which is invariant with respect to $T^\ad$ and contains the subspace $\cH_\cG^\bot$ \textnormal{(}see \eqref{E2.1002}\textnormal{)}.
Analogously, to the left regular divisor $\tell$ of $\te$ there corresponds the smallest subspace of $\cH$ which is invariant with respect to $T$ and contains the subspace $\cH_\cF^\bot$ \textnormal{(}see \eqref{E2.1002}\textnormal{)}, where $Y$ is given by \eqref{1.2}.
\ethm

\bthmnl{\zitaa{BDFK1}{\crem{1}}}{T4.5}
A function $\mu\in\SF{\cM}{\cN}$ is the right (\tresp{}\ left) defect function of some Schur function if and only if
\beql{E4.7}
\mu=\mu_{rr}\text{ (\tresp{}\ $\mu=\mu_{\ell\ell}$).}
\eeq
\ethm

Important properties of defect functions are contained in the following statement.

\bthmnl{\textnormal{see, \teg{}, \zitaa{SN}{\cch{V}}}}{T4.6}
Let $N\in  CM\ek{\cC}$ be a non-negative function.
Then there exists 
an outer function $\alpha_o\in\SF{\cC}{\cN_o}$ such that
\beql{E4.8}
\bv{\alpha_o}^\ad\rk{t}\bv{\alpha_o}\rk{t}\leq N^2\rk{t}\text{ \tae{}\ on }\T.
\eeq
Furthermore, for any $\alpha\in\SF{\cC}{\cN}$ such that
\[
\bv{\alpha}^\ad\rk{t}\bv{\alpha}\rk{t}\leq N^2\rk{t}\text{ \tae{}\ on }\T
\]
the inequality
\beql{E4.10}
\bv{\alpha}^\ad\rk{t}\bv{\alpha}\rk{t}\leq \bv{\alpha_o}^\ad\rk{t}\bv{\alpha_o}\rk{t}\text{ \tae{}\ on }\T
\eeq
holds true.
These conditions determine the function $\alpha_o$ up to a left constant unitary factor.

In \eqref{E4.8} equality holds true for almost all $t\in\T$ if and only if 
\[
\bigcap_{n=0}^\infty t^n \clo{NL^2_{\mathord{+}}\rk{\cC}}
=\set{0}
\]
is satisfied.
\ethm

\bdefnl{D1454}
Let $N\in CM\ek{\cC}$ be a non-negative function.
The function $\alpha_o\in\SF{\cC}{\cN_o}$ from \rthm{T4.6} is called \emph{the largest minorant for the function $N$} (\emph{in the Schur class}).
A function $\alpha_{\ast o}\in\SF{\cN_{\ast o}}{\cC}$ is said to be \emph{the largest $\ast$\nobreakdash-minorant for the function $N$} if the associated function $\rk{\alpha_{\ast o}}^\sch$ is the largest minorant for the function $N^\sch$ given by $N^\sch\rk{t}\defeq N\rk{\ko t}$, $t\in\T$, \tie,
\[
\bv{\alpha_{\ast o}}\rk{t}\bv{\alpha_{\ast o}}^\ad\rk{t}
\leq N^2\rk{t}\qquad\text{a.\,e.\ on }\T
\]
and for any $\alpha\in\SF{\cN_{\ast}}{\cC}$ such that
\[
\bv{\alpha}\rk{t}\bv{\alpha}^\ad\rk{t}
\leq N^2\rk{t}\qquad\text{a.\,e.\ on }\T
\]
the inequality
\[
\bv{\alpha}\rk{t}\bv{\alpha}^\ad\rk{t}
\leq\bv{\alpha_{\ast o}}\rk{t}\bv{\alpha_{\ast o}}^\ad\rk{t}\qquad\text{a.\,e.\ on }\T
\]
holds true.
\edefn

Note that (see \rthm{T4.6}) the largest minorant (resp.\ $*$\nobreakdash-minorant) is an outer (resp.\ $*$\nobreakdash-outer) function and is determined up to a left (resp.\ right) constant unitary factor.

Ideas for introducing the largest minorant go back to G.~Szeg\H{o} \cite{Sze21} and N.~Wiener \cite{W}.
The concept of largest minorants was essentially developed in the context of multivariate prediction theory by N.~Wiener and P.~R.~Masani (see, \teg{}, \cite{MR121952,MR140930,MR97856,MR97859}).
Further historical information on the introduction of this concept can be found, \teg{}, in \zitaa{SN}{Notes to \csec{V.4}}.

\bthmnl{\zitaa{BD4}{\cch{4}}, \zitaa{DM}{\cthm{2.7}}}{T4.8}
Let $\te\in\SF{\cF}{\cG}$.
\benui
\il{T4.8.a} The right defect function $\ter$ of $\te$ is the largest minorant for the function $\Pi\in CM\ek{\cF}$ given by
\begin{align}\label{E4.12}
    \Pi\rk{t}&\defeq\rk{\IF-\bv\te^\ad\rk{t}\bv\te\rk{t}}^{1/2},&t&\in\T.
\end{align}
\il{T4.8.b} The left defect function $\tel$ of $\te$ is the largest $\ast$\nobreakdash-minorant for the function $\Sigma\in CM\ek{\cG}$ given by
\begin{align}\label{E4.13}
    \Sigma\rk{t}&\defeq\rk{\IG-\bv\te\rk{t}\bv\te^\ad\rk{t}}^{1/2},&t&\in\T.
\end{align}
\eenui
Thus, the inequalities
\begin{align}\label{E4.14}
    \bv\ter^\ad\rk{t}\bv\ter\rk{t}&\leq\IF-\bv\te^\ad\rk{t}\bv\te\rk{t},&
    \bv\tel\rk{t}\bv\tel^\ad\rk{t}&\leq\IG-\bv\te\rk{t}\bv\te^\ad\rk{t}
\end{align}
hold true for almost all $t\in\T$.
\ethm

The functions $\Pi\in  CM\ek{\cF}$ and $\Sigma\in  CM\ek{\cG}$ are called \emph{the defect} and \emph{$\ast$\nobreakdash-defect functions of $\bv\te\in  CM\ek{\cF,\cG}$ in the class of contractive measurable functions}, respectively.

\bcorl{C4.9}
The right \textnormal{(}\tresp{}\ left\textnormal{)} defect function $\ter\in\SF{\cF}{\cL_0}$ \textnormal{(}\tresp{}\ $\tel\in\SF{\tcL_0}{\cG}$\textnormal{)} of a function $\te\in\SFFG$ is outer \textnormal{(}$\ast$\nobreakdash-outer\textnormal{)}.
\ecor

Note that not every outer (\tresp{}\ $\ast$\nobreakdash-outer) Schur function $\mu\in\SF{\cM}{\cN}$ is the right (\tresp{}\ left) defect function of some Schur function.
For example (see \cite{BDFK1}), we consider a scalar function $\mu$ which conformally maps $\D$ onto a domain $\Omega\subseteq\setaca{\ze\in\D}{\re\ze>0}$ such that $\bv\lambda\rk{\partial\Omega\cap\T}>0$.
Then $\re\mu\rk{\ze}>0$ for $\ze\in\D$ and, consequently, $\mu$ is outer (see, \teg{}, \zitaa{RR}{\cch{4}}).
The first inequality \eqref{E4.14} for $\mu$ has the form
\[
\bv{\mu_r}^\ad\rk{t}\bv{\mu_r}\rk{t}
\leq\Iu{\cM}-\bv{\mu}^\ad\rk{t}\bv{\mu}\rk{t}
\]
for almost all $t\in\T$.
Hence, $\mu_r\equiv0$ and the equality $ \mu=\mu_{rr}$ (see \eqref{E4.7}) is not valid.

\subsection{Unitary couplings}\label{subsec4.2-1003}

\bdefnnl{\zitaa{BD-1}{\cpart{I}}}{D4.9}
A 6-tuple
\beql{E4.15}
\sigma
\defeq\rk{\cH,\cF,\cG;U,V_\cF,V_\cG}
\eeq
is called \emph{a unitary coupling} or simply \emph{a coupling} if
\benui
\il{D4.9.a} $\cH,\cF,\cG$ are Hilbert spaces;
\il{D4.9.b} $U$ is a unitary operator on $\cH$;
\il{D4.9.c} $V_\cF\colon\cF\to\cH$ and $V_\cG\colon\cG\to\cH$ are isometric operators, \tie{}, $V_\cF^\ad V_\cF=\IF$ and $V_\cG^\ad V_\cG=\IG$;
\il{D4.9.d} the subspaces $\mathring{\cF}\defeq\ran\rk{V_\cF}$ and $\mathring{\cG}\defeq\ran\rk{V_\cG}$ are wandering with respect to $U$.
\eenui
The spaces $\cF$ and $\cG$ are said to be \emph{the principal input} and \emph{output channeled spaces of the unitary coupling $\sigma$}, respectively.
The operator $U$ is called \emph{connecting} and the operators $V_\cF$ and $V_\cG$ are said to be \emph{the principal embedding isometries of the unitary coupling $\sigma$}.
\edefn

Henceforth, we will denote by $\mathring{\cL}$ the range of any isometry $V_\cL\colon\cL\to\cH$ if it is wandering with respect to $U$.

It should be noted that \rdefn{D4.9} 
differs only in form from the definition of a unitary coupling for semi-unitary operators which is given in \cite{A-A,A-1}.
Indeed, any subspace $\cN$ wandering with respect to $U$ generates the subspaces
\begin{align*}
    \mm{\cN}&\defeq\bigoplus_{k=-\infty}^\infty U^k\cN,&
    \mathfrak{R}_\cN&\defeq\cH\ominus\mm{\cN},\\
    \mmp{\cN}&\defeq\bigoplus_{k=0}^\infty U^k\cN,&
    \mmm{\cN}&\defeq\bigoplus_{k=-\infty}^{-1} U^k\cN.
\end{align*}
Hence, in the notations of \cite{A-A} coupling \eqref{E4.15} is the coupling of the simple semi-unitary operators
\begin{align*}
    V_{\mathord{-}}&\defeq\rstr{U^\ad}{\mmm{\mathring{\cF}}},&
    V_{\mathord{+}}&\defeq\rstr{U}{\mmp{\mathring{\cG}}}.
\end{align*}
Together with any coupling $\sigma$ of form \eqref{E4.15} we will consider the couplings
\begin{align}
    \sigma^\ad&\defeq\rk{\cH,\cG,\cF;U,V_\cG,V_\cF},\label{E4.16}\\
    \sigma^\sch&\defeq\rk{\cH,\cG,\cF;U^\ad,V_\cG,V_\cF}\label{E4.17}
\end{align}
which are called \emph{adjoint} and \emph{associate} with respect to the coupling $\sigma$, respectively.

\bdefnnl{{\cite[Part~I]{BD-1}}}{D4.10}
Let $\sigma$ be a unitary coupling of form \eqref{E4.15}.
By \emph{the principal part of $\sigma$} we mean the coupling
\[
\sigma^{\rk{1}}
\defeq\rk{\cH^{\rk{1}},\cF,\cG;U^{\rk{1}},V_\cF,V_\cG},
\]
where $\cH^{\rk{1}}\defeq\mm{\mathring{\cF}}\vee\mm{\mathring{\cG}}$, \ $U^{\rk{1}}\defeq\rstr{U}{\cH^{\rk{1}}}$.
The coupling $\sigma$ is called \emph{minimal}, if $\sigma=\sigma^{\rk{1}}$ and \emph{abundant} otherwise.
\edefn

Hereinafter, we maintain the notation $V_\cF$ (resp.\ $V_\cG$) when $\cH$ is replaced by any subspace which contains $\mm{\mathring{\cF}}$ (resp.\ $\mm{\mathring{\cG}}$) or by any space that contains $\cH$.
In the following we will need a simple geometric assertion.

\blemnl{{\cite[Part~I]{BD-1}}}{L4.11}
Let $\cL$ and $\cM$ be subspaces of  a Hilbert space $\cH$.
Then
\[
\cL
=\rk{\cL\cap\cM}\oplus\clo{P_\cL\cM^\oc}.
\]
\elem

\bcornl{{\cite[Part~I]{BD-1}}}{C4.12}
The equalities
\begin{align*}
    \cL^\oc&=\clo{P_{\cL^\oc}\cM}&
    &\text{and}&
    \cM^\oc&=\clo{P_{\cM^\oc}\cL}
\end{align*}
are equivalent.
They are valid if and only if
\[
\cH
=\cL\vee\cM.
\]
\ecor

From \rcor{C4.12} we conclude the next result.

\blemnl{{\cite[Part~I]{BD-1}}}{L4.13}
A unitary coupling $\sigma$ of form \eqref{E4.15} is minimal if and only if any of the two following equivalent conditions
\begin{align*}
    \mathfrak{R}_{\mathring{\cF}}&=\clo{P_{\mathfrak{R}_{\mathring{\cF}}}\mm{\mathring{\cG}}}&
    &\text{or}&
    \mathfrak{R}_{\mathring{\cG}}&=\clo{P_{\mathfrak{R}_{\mathring{\cG}}}\mm{\mathring{\cF}}}
\end{align*}
is satisfied.
\elem

\bdefnnl{{\cite[Part~I]{BD-1}}}{D4.14}
Unitary couplings
\begin{align}\label{E4.18}
    \sigma&\defeq\rk{\cH,\cF,\cG;U,V_\cF,V_\cG},&
    \sigma'&\defeq\rk{\cH',\cF,\cG;U',V_\cF',V_\cG'}
\end{align}
are called \emph{unitarily equivalent} if there exists a unitary operator $Z\colon\cH\to\cH'$ such that
\begin{align}\label{E4.19}
    U'Z&=ZU,&
    V_\cF'&=ZV_\cF,&
    V_\cG'&=ZV_\cG.
\end{align}
In this case, we say that $Z$ \emph{establishes the unitary equivalence of $\sigma$ and $\sigma'$}.
\edefn

Note that if the couplings $\sg$ and $\sg'$ are unitary equivalent and minimal, then the operator $Z$ is uniquely determined by conditions \eqref{E4.19} (see, \teg{}, \zitaa{BD-1}{\cpart{I}}).

\subsection{The scattering suboperator of a unitary coupling}

\bdefnnl{\zitaa{SN}{\cch{V}}}{D4.15}
Let $\cH$ and $\cL$ be Hilbert spaces and let $U\colon\cH\to\cH$ be a unitary operator.
Assume that $V_\cL\colon\cL\to\cH$ is an isometric operator such that $\mathring{\cL}$ is a wandering subspace of $\cH$ with respect to $U$.
The operator $\Phi_U^{V_\cL}\colon\cH\to L^2\rk{\cL}$ defined by
\begin{align}\label{E4.20}
    \ek{\Phi_U^{V_\cL}h}\rk{t}&\defeq\sum_{k=-\infty}^\infty t^kV_\cL^\ad U^{-k}h,&
    h&\in\cH,&t\in\T,
\end{align}
is called \emph{the Fourier representation} that corresponds to the operators $U$ and $V_\cL$.
\edefn

Obviously, the operator $\Phi_U^{V_\cL}$ is coisometric with the initial subspace $\mm{\mathring{\cL}}$, \tie{},
\begin{align*}
    \rk{\Phi_U^{V_\cL}}^\ad\Phi_U^{V_\cL}&=P_{\mm{\mathring{\cL}}},&
    \Phi_U^{V_\cL}\rk{\Phi_U^{V_\cL}}^\ad&=\Iu{L^2\rk{\cL}}.
\end{align*}
Moreover, the equalities
\begin{align*}
    \Phi_U^{V_\cL}M_{\mathord{\pm}}\rk{\mathring{\cL}}&=L_{\mathord{\pm}}^2\rk{\cL},&
    \Phi_U^{V_\cL}U&=U_\cL^{\mathord{\times}}\Phi_U^{V_\cL}
\end{align*}
are valid, where $L_{\mathord{-}}^2\rk{\cL}\defeq L^2\rk{\cL}\ominus L_{\mathord{+}}^2\rk{\cL}$ and $U_\cL^{\mathord{\times}}\colon L^2\rk{\cL}\to L^2\rk{\cL}$ is the multiplication operator by $t$.

Let $\sigma$ be a unitary coupling of form \eqref{E4.15}.
Consider the operator $S_\sigma\colon\mm{\mathring{\cG}}\to\mm{\mathring{\cF}}$ defined by
\beql{E4.21}
S_\sigma
\defeq\rstr{P_{\mm{\mathring{\cF}}}}{\mm{\mathring{\cG}}}.
\eeq
Clearly, $S_\sigma$ is a contraction.
Since the subspaces $\mm{\mathring{\cF}}$ and $\mm{\mathring{\cG}}$ reduce the operator $U$, $S_\sigma$ intertwines the bilateral shifts $U_{\mathring{\cF}}\defeq\rstr{U}{\mm{\mathring{\cF}}}$ and $U_{\mathring{\cG}}\defeq\rstr{U}{\mm{\mathring{\cG}}}$, \tie{},
\[
U_{\mathring{\cF}}S_\sigma
=S_\sigma U_{\mathring{\cG}}.
\]

By means of the Fourier representations $\Phi_U^{V_\cF}$ and $\Phi_U^{V_\cG},$ one can introduce the operator $\hat{\vartheta}_\sigma\colon L^2\rk{\cG}\to L^2\rk{\cF}$ by
\[
\hat{\vartheta}_\sigma
\defeq\Phi_U^{V_\cF}S_\sigma\rk{\Phi_U^{V_\cG}}^\ad.
\]
Obviously, the equality $\hat{\vartheta}_\sigma=\Phi_U^{V_\cF}\rk{\Phi_U^{V_\cG}}^\ad$ is also true.
The operator $\hat{\vartheta}_\sigma$ is a contraction intertwining the bilateral shifts $U_\cF^{\mathord{\times}}$ and $U_\cG^{\mathord{\times}}$, \tie{},
\beql{E4.23}
U_\cF^{\mathord{\times}}\hat{\vartheta}_\sigma
=\hat{\vartheta}_\sigma U_\cG^{\mathord{\times}}.
\eeq
The operator $\hat{\vartheta}_\sigma$ is called \emph{the scattering operator of the coupling $\sigma$}.

It is well known (see, \teg{}, \cite{SN}) that if condition \eqref{E4.23} holds true, then there exists a unique (we do not distinguish between operator functions in $L^\infty\ek{\cG,\cF}$ that coincide \tae{}\ on $\T$) operator function $\vartheta_\sigma\in  CM\ek{\cG,\cF}$ such that
\begin{align}\label{E4.24}
    \vartheta_\sigma\rk{t}g\rk{t}&=\rk{\hat{\vartheta}_\sigma g}\rk{t},&
    g&\in L^2\rk{\cG}.
\end{align}
Moreover,
\[
\norm{\vartheta_\sigma}_{L^\infty\ek{\cG,\cF}}
=\norm{\hat{\vartheta}_\sigma}_{\ek{L^2\rk{\cG},L^2\rk{\cF}}}
\leq1.
\]
Following \cite{A-A}, we introduce the following notion.

\bdefnl{D4.16}
The operator function $\vartheta_\sigma\in  CM\ek{\cG,\cF}$ 
is called \emph{the scattering suboperator of the unitary coupling $\sigma$}.
\edefn

It follows from \eqref{E4.21}--\eqref{E4.24} that $\vartheta_{\sigma^{\rk{1}}}=\vartheta_\sigma$, where $\sigma^{\rk{1}}$ is the principal part of the coupling $\sigma$.
Moreover, if $\sigma$ and $\sigma'$ are unitarily equivalent couplings of form \eqref{E4.18}, then $\vartheta_\sigma=\vartheta_{\sigma'}$ (see, \teg{}, \cite{A-A} or \zitaa{BD-1}{\cpart{I}}).

From \eqref{E4.16}--\eqref{E4.17} and \eqref{E4.21}--\eqref{E4.24} it can be easily seen that $\vartheta_{\sigma^\ad}=\vartheta_\sigma^\ad$, $\vartheta_{\sigma^\sch}=\vartheta_\sigma^\sch$, where $\vartheta_\sigma^\ad\rk{t}\defeq\rk{\vartheta_\sigma\rk{t}}^\ad$, $\vartheta_\sigma^\sch\rk{t}\defeq\vartheta_\sigma^\ad\rk{\ko t}$, $t\in\T$.

It is very important that every contractive measurable operator function on $\T$ can be considered as the scattering suboperator of some unitary coupling.

\bthmnl{\cite{A-A}, \zitaa{BD-1}{\cpart{I}}, \cite{MR3208800}}{T4.18}
An arbitrary operator function $\vartheta\in  CM\ek{\cG,\cF}$ is the scattering suboperator of some minimal unitary coupling $\sigma$ of form \eqref{E4.15}, \tie{}, $\vartheta_\sigma=\vartheta$.
Moreover, the minimal coupling $\sigma$ is determined up to unitary equivalence.
\ethm

\subsection{Extensions of contractive measurable operator functions}\label{subsec4.4-1003}

\bdefnnl{\zitaa{BD-1}{\cpart{II}}}{D4.19}
Let $\sigma$ be a unitary coupling of form \eqref{E4.15}.
By \emph{a bilateral} (resp.\ \emph{unilateral input, unilateral output}) \emph{channel of the coupling} $\sigma$ we mean a triple
\beql{E4.25} 
\rk{\cN,\cL;V_\cL}\qquad(\text{resp.\ }\rk{\cN_-,\cL;V_\cL}, \rk{\cN_+,\cL;V_\cL}),
\eeq
where 
\begin{enui}
    \il{D4.19.a} $V_\cL\colon\cL\to\cH$ is an isometric operator;
    \il{D4.19.b} the subspace $\mathring{\cL}\defeq\ran V_\cL$ is wandering with respect to $U$;
    \il{D4.19.c} $\cN=\mm{\mathring{\cL}}$ (resp.\ $\cN_-=\mmm{\mathring{\cL}}$, $\cN_+=\mmp{\mathring{\cL}}$).
\end{enui}
By \emph{the multiplicity} of a bilateral (unilateral) channel of form \eqref{E4.25} we mean $\dim\cL$ ($=\dim\mathring{\cL}$).

The space $\cL$ is said to be \emph{channeled} for the corresponding channel and the isometry $V_\cL$ is called \emph{embedding} for it.
\edefn

Any unilateral input (resp.\ output) channel obviously generates a unique bilateral one.
Conversely, any bilateral channel decomposes into a unique pair of unilateral input and output ones.

Clearly, any pair of unilateral channels
\beql{E4.26}
\setb{\rk{\cN_-,\cL;V_\cL}, \rk{\cN_+,\cM;V_\cM}}
\eeq
for a coupling $\sigma$ of form \eqref{E4.15} generates the coupling
\[
\sigma'
\defeq\rk{\cH,\cL,\cM;U,V_\cL,V_\cM}.
\]
From the point of view of the scattering theory (see, \teg{}, \cite{LPh}), this coupling is determined (up to embedding isometries $V_\cL,V_\cM$) by the pair $\set{\mmm{\mathring{\cL}},\mmp{\mathring{\cM}}}$ of so called \emph{incoming} and \emph{outgoing} subspaces of $\cH$ with respect to $U$.
Therefore, the function $\vartheta_{\sigma'}\in  CM\ek{\cM,\cL}$ can be considered as the suboperator of scattering generated by the pair of unilateral channels \eqref{E4.26}.

\bdefnl{D4.20}
Let $\vartheta\in  CM\ek{\cG,\cF}$.
An operator function
\beql{E4.27}
\Xi
\defeq\Mat{\vartheta_{11}&\vartheta_{12}\\\vartheta_{21}&\vartheta_{22}}\in  CM\ek{\cG'\oplus\cG,\cF'\oplus\cF},\qquad
\vartheta_{22}
\defeq\vartheta,
\eeq
will be called \emph{an up-leftward extension of} $\vartheta$ (\emph{in the class of contractive measurable operator functions}).
In the particular cases, if $\cG'=\set{0}$ or $\cF'=\set{0}$, the function $\Xi$ can be written in the form
\[
\Xi
=\Mat{\vartheta_{12}\\\vartheta}\qquad
\text{or}\qquad
\Xi
=\mat{\vartheta_{21},\vartheta}
\]
and is called \emph{an upward or leftward extension of} $\vartheta$, respectively.
\end{defn}

\begin{defn}
Let $\vartheta\in  CM\ek{\cG,\cF}$ and $\Xi\in  CM\ek{\cG'\oplus\cG,\cF'\oplus\cF}$ be an up-leftward extension of $\vte$ of form \eqref{E4.27}.
Let
\beql{E4.28}
\tau
\defeq\rk{\cH,\cF'\oplus\cF,\cG'\oplus\cG;U,V_{\cF'\oplus\cF},V_{\cG'\oplus\cG}}
\eeq
be a minimal unitary coupling such that $\vte_\tau=\Xi$.
Let also
\[
\tau_{22}
\defeq\rk{\cH,\cF,\cG;U,V_\cF,V_\cG}
\]
be a coupling (not necessarily minimal) generated by the pair
\beql{E4.29}
\setb{\rk{\cN_-,\cF;V_\cF}, \rk{\cN_+,\cG;V_\cG}}
\eeq
of unilateral channels of coupling $\tau$, where
\[
V_\cF\defeq\rstr{V_{\cF'\oplus\cF}}{\cF},\qquad
V_\cG\defeq\rstr{V_{\cG'\oplus\cG}}{\cG},
\]
such that $\vte_{\tau_{22}}=\vte$.
The extension $\Xi$ is called \emph{regular} if the coupling $\tau_{22}$ is minimal.
\edefn

Note that the regularity of the extension  $\Xi$ of $\vte$ is obviously equivalent to the regularity of any of the extensions $\Xi^\ad$ of $\vte^\ad$ and $\Xi^\sch$ of $\vte^\sch$.

Regularity of the extension $\Xi$ means that the equality
\[
\mm{\mathring{\cF}}\vee\mm{\mathring{\cG}}
=\mm{\mathring{\cF}'\oplus\mathring{\cF}}\vee\mm{\mathring{\cG}'\oplus\mathring{\cG}}
\]
holds true.
Taking into account that
\[
\mm{\mathring{\cF}'}\perp\mm{\mathring{\cF}},\qquad
\mm{\mathring{\cG}'}\perp\mm{\mathring{\cG}},
\]
the latter is equivalent to the inclusions
\[
\mm{\mathring{\cF}'}\subset\mathfrak{R}_{\mathring{\cF}},\qquad
\mm{\mathring{\cG}'}\subset\mathfrak{R}_{\mathring{\cG}}.
\]
Obviously, these properties do not depend on the choice of a minimal coupling $\tau$.

\bthmnl{\cite{BD20}}{T4.21}
Let $\vartheta\in  CM\ek{\cG,\cF}$ and let $\sigma$ be a minimal unitary coupling of form \eqref{E4.15} such that $\vartheta_\sigma=\vartheta$.
There exists a bijective correspondence between pairs
\beql{E4.31}
\setb{\rk{\cN_-',\cF';V_{\cF'}}, \rk{\cN_+',\cG';V_{\cG'}}}
\eeq
of unilateral channels of the coupling $\sigma$ such that
\beql{E4.32}
\cN_-'\perp\mmm{\mathring{\cF}},\qquad
\cN_+'\perp\mmp{\mathring{\cG}}
\eeq
and regular extensions $\Xi\in  CM\ek{\cG'\oplus\cG,\cF'\oplus\cF}$ of form \eqref{E4.27}.
This correspondence is established by the equality $\Xi=\vartheta_\tau$, where $\tau$ is a coupling of form \eqref{E4.28} and
\[
V_{\cF'\oplus\cF}
\defeq V_{\cF'}P_{\cF'}+V_\cF P_\cF,\qquad
V_{\cG'\oplus\cG}
\defeq V_{\cG'}P_{\cG'}+V_\cG P_\cG.
\]
\ethm

Note also that for the other blocks of the matrix $\Xi$ that are different from $\vartheta$ the equalities
\[
\vartheta_{12}=\vartheta_{\tau_{12}},\qquad
\vartheta_{21}=\vartheta_{\tau_{21}},\qquad
\vartheta_{11}=\vartheta_{\tau_{11}}
\]
hold true.
These couplings
\beql{E4.33}
\begin{aligned}
    \tau_{12}&\defeq\rk{\cH,\cF',\cG;U,V_{\cF'},V_\cG},&
    \tau_{21}&\defeq\rk{\cH,\cF,\cG';U,V_{\cF},V_{\cG'}},\\
    &&\tau_{11}&\defeq\rk{\cH,\cF',\cG';U,V_{\cF'},V_{\cG'}}
\end{aligned}
\eeq
(not necessarily minimal ones) are generated by the corresponding pairs formed by unilateral input and output channels from \eqref{E4.29} and \eqref{E4.31}--\eqref{E4.32}.

\bcorl{C4.22}
Let $\Xi\in  CM\ek{\cG'\oplus\cG,\cF'\oplus\cF}$ be a regular up-leftward extension of form \eqref{E4.27} for an operator function $\vartheta\in  CM\ek{\cG,\cF}$.
Then the block $\vartheta_{11}\in  CM\ek{\cG',\cF'}$ is uniquely determined by the other three blocks of matrix \eqref{E4.27}.
\ecor

In the case of regularity, any non-trivial (\tie{}, different from $\vartheta$) extension of $\vartheta$ has some extremal property of its norm.

\bthmnl{\cite{BD20}}{T4.23}
Let $\vartheta\in  CM\ek{\cG,\cF}$.
Then for any non-trivial regular extension $\Xi\in  CM\ek{\cG'\oplus\cG,\cF'\oplus\cF}$ of $\vartheta$ the equality
\[
\norm{\Xi}_{L^\infty\ek{\cG'\oplus\cG,\cF'\oplus\cF}}=1
\]
is valid.
\ethm

In the case of finite-dimensional spaces $\cF,\cG,\cF',\cG'$ the functions $\vartheta$ and $\Xi$ may be considered as matrix-valued functions.
If $\dim\cF'=r$ and $\dim\cG'=s$, we may call the matrix function $\Xi$ of form \eqref{E4.27} an up-leftward extension of the matrix function $\vartheta$ \emph{by $r$ rows and $s$ columns}.
This case was investigated in \zitaa{D82}{\cpart{VI}}.

\bthmnl{\zitaa{D82}{\cpart{VI}}, \cite{DM}}{T4.24}
Let $\vartheta\in  CM\ek{\cG,\cF}$, $\dim\cG<\infty$, $\dim\cF<\infty$.
A matrix function $\Xi\in  CM\ek{\cG'\oplus\cG,\cF'\oplus\cF}$, $\dim\cG'=s<\infty$, $\dim\cF'=r<\infty$, of form \eqref{E4.27} is a regular up-leftward extension of the matrix function $\vartheta$ by $r$ rows and $s$ columns if and only if the equalities
\begin{align*}
    \rank\rkb{\IG-\vartheta^\ad\rk{t}\vartheta\rk{t}}&=r+\rank\rkb{\IG-\vartheta_{12}^\ad\rk{t}\vartheta_{12}\rk{t}-\vartheta^\ad\rk{t}\vartheta\rk{t}}
    \intertext{and}
    \rank\rkb{\IF-\vartheta\rk{t}\vartheta^\ad\rk{t}}&=s+\rank\rkb{\IF-\vartheta_{21}\rk{t}\vartheta_{21}^\ad\rk{t}-\vartheta\rk{t}\vartheta^\ad\rk{t}}
\end{align*}
hold true $\underline{\lambda}$\nobreakdash-almost everywhere.
\ethm

Note that in \cite{BD20} a different approach to the study of regular extensions was used.
It is based on the notion of regular factorization of contractive measurable operator functions.
It is easy to check the equivalence of these two approaches.

\subsection{Interrelations between unitary couplings and unitary colligations}\label{subsec4.5-1003}

\bdefnnl{\cite{A-1}}{D4.24}
A unitary coupling $\sigma$ of form \eqref{E4.15} is called \emph{orthogonal} if
\beql{E4.34}
\mmm{\mathring{\cF}}\perp\mmp{\mathring{\cG}}.
\eeq
\edefn

Orthogonal couplings are characterized by the fact that their scattering suboperators are strong boundary value functions of Schur operator functions.

\bthmnl{\cite{A-1}}{T4.25}
A unitary coupling $\sigma$ of form \eqref{E4.15} is orthogonal if and only if $\vartheta_\sigma\in L_+^\infty\ek{\cG,\cF}$.
\ethm
\bproof
Recall that the Fourier-series expansion of $\vartheta_\sigma$ is given by
\[
\vartheta_\sigma\rk{t}
=\sum_{k=-\infty}^\infty t^k\vartheta_k,
\qquad t\in\T,
\quad \vartheta_k\in\ek{\cG,\cF}
\quad (k=0,\pm1,\pm2,\dotsc),
\]
(see, \teg{}, \zitaa{SN}{\cch{V}}) which means that for any $g\in\cG$ we have
\beql{E4.35}
\vartheta_\sigma\rk{t}g
=\sum_{k=-\infty}^\infty t^k\rk{\vartheta_k g}
\eeq
in $L^2\rk{\cF}$.
On the other hand, using representation \eqref{E4.20} for $\cL\defeq\cF$ and $h\defeq\rk{\Phi_U^{V_\cG}}^\ad g(=V_\cG g)$, we obtain
\beql{E4.36}
\vartheta_\sigma\rk{t}g
=\rk{\vartheta_\sigma g}\rk{t}
=\Phi_U^{V_\cF}\rk{\Phi_U^{V_\cG}}^\ad g
=\sum_{k=-\infty}^\infty t^k\rk{V_\cF^\ad U^{-k} V_\cG g},
\quad g\in\cG.
\eeq
From \eqref{E4.35} and \eqref{E4.36} it follows that
\beql{E4.37}
\vartheta_k
=V_\cF^\ad U^{-k} V_\cG
\quad\rk{k=0,\pm1,\pm2,\dotsc}.
\eeq
Clearly, $\vartheta_k=0$ if and only if $\mathring{\cF}\perp U^{-k}\mathring{\cG}$ and, hence, $\vartheta_\sigma\in L_+^\infty\ek{\cG,\cF}$ if and only if $\mathring{\cF}\perp U^{-k}\mathring{\cG}$ for $k\in\set{-1,-2,\dotsc}$.
The latter condition is obviously equivalent to condition \eqref{E4.34}.
\eproof

Following \cite{A-1}, for any orthogonal coupling $\sigma$ of form \eqref{E4.15} one can consider the 7-tuple
\beql{E4.38}
\Dl
\defeq\rk{\cH_\Dl,\cF,\cG;T,F,G,S},
\eeq
where
\begin{gather}
    \cH_\Dl\defeq\cH\ominus\rkb{\mmm{\mathring{\cF}}\oplus\mmp{\mathring{\cG}}},\qquad
    T\defeq\rstr{\rk{P_{\cH_\Dl}U}}{\cH_\Dl},\label{E4.39}\\
    F\defeq P_{\cH_\Dl}V_\cF,\qquad
    G\defeq\rstr{\rk{V_\cG^\ad U}}{\cH_\Dl},\qquad
    S\defeq V_\cG^\ad V_\cF.\nonumber
\end{gather}
Taking into account the equality $U\rk{\cH_\Dl\oplus U^\ad\mathring{\cF}}=\cH_\Dl\ominus\mathring{\cG}$, one can easily verify by direct calculation that the operator
\[
U_\Dl
\defeq\Mat{T&F\\G&S}\colon\cH_\Dl\oplus\cF\to\cH_\Dl\oplus\cG
\]
is unitary.
Therefore, $\Dl$ is a unitary colligation which is said to be \emph{generated by the orthogonal coupling $\sigma$}.
The subspaces $\cH_{\mathrm{int}}\defeq\cH_\Dl$ and $\cH_{\mathrm{ext}}\defeq\cH_{\mathrm{int}}^\bot$ are called \emph{internal} and \emph{external} for the coupling $\sigma$, respectively.
The operator $T$ given in \eqref{E4.39} is called \emph{fundamental} for $\sigma$ (as well as for $\Dl$).
Sometimes we denote it by $T_\sigma$.

The converse construction can also easily be realized (see \cite{A-1}, \zitaa{BD-1}{\cpart{I}}, \cite{MR3208800}).
Let $\Dl$ be a unitary colligation of form \eqref{E4.38}.
Consider two arbitrary unilateral shifts
\[
V_-\colon\mmp{\cF}\to\mmp{\cF},\qquad
V_+\colon\mmp{\cG}\to\mmp{\cG}
\]
with generating wandering subspaces $\cF$ and $\cG$, respectively.
Denote
\begin{gather*}
    \cH\defeq\mmp{\cF}\oplus\cH_\Dl\oplus\mmp{\cG},\;
    U\defeq V_-^\ad P_{\mmp{V_-\cF}}+U_\Dl P_{\cH_\Dl\oplus\cF}+V_+P_{\mmp{\cG}},\\
    V_\cF\defeq\rstr{U}{\cF}(=\rstr{U_\Dl}{\cF}),\qquad
    V_\cG\defeq\rstr{P_\cG}{\cG}.
\end{gather*} 
Then $\sigma\defeq\rk{\cH,\cF,\cG;U,V_\cF,V_\cG}$ is a unitary coupling.
Since $\mathring{\cF}\defeq U\cF$, $\mathring{\cG}\defeq\cG$, we see that $\mmp{\cF}$ for $V_-$ can be rewritten as $\mmm{\mathring{\cF}}$ for $V_\cF$ with respect  to $U$ and that $\mmp{\cG}$ for $V_+$ turns into $\mmp{\mathring{G}}$ for $V_\cG$ and $U$.
Thus, $\sigma$ is orthogonal.
Obviously, the so constructed coupling $\sigma$ generates the original colligation $\Dl$.
Moreover, $\sigma$ is determined by $\Dl$ up to unitary equivalence.
Clearly, $\sigma$ is minimal if and only if $\Dl$ is simple.

\bthmnl{\cite{A-1}}{T4.26}
Assume that $\sigma$ is an orthogonal unitary coupling of form \eqref{E4.15} and $\Dl$ is the unitary colligation generated by $\sigma$.
Then $\vartheta_\sigma=\bv{\theta_{\Dl^\sch}}(=\bv{\theta^\sch_\Dl})$.
\ethm
\bproof
Taking into account \rthm{T4.25} and formulas \eqref{E4.37} and \eqref{E4.39}, we obtain
\[\begin{split}
    \vartheta\rk{t}
    &=\sum_{k=0}^\infty t^kV_\cF^\ad U^{-k} V_\cG\\
    &=V_\cF^\ad V_\cG+\sum_{k=1}^\infty t^k\rk{V_\cF^\ad P_{\cH_\Dl}}\rk{P_{\cH_\Dl}U^{-k+1}P_{\cH_\Dl}}\rk{P_{\cH_\Dl}U^\ad V_\cG}\\
    &=S^\ad+t\sum_{n=0}^\infty t^nF^\ad\rk{T^\ad}^nG^\ad
    =\bv{\theta^\sch_\Dl}\rk{t}
    =\bv{\theta_{\Dl^\sch}}\rk{t}.\qedhere
\end{split}\]
\eproof

The dual relation between $\vartheta_\sigma$ and $\theta_\Dl$ obtained in \rthm{T4.26} follows from the existing duality of approaches in defining the scattering suboperators in the scattering theory and the transfer functions in the theory of open systems (see, \teg{}, \cite{A-A,A-1,B,LPh,SN}).

\subsection{Suboperator of internal scattering of an orthogonal unitary coupling}\label{s4.6}

\breml{R4.27}
Note that a unilateral input (\tresp{}\ output) channel $\rk{\cN_-,\cL;V_\cL}$ (\tresp{}\ $\rk{\cN_+,\cL;V_\cL}$) of a minimal coupling $\sigma$ of form \eqref{E4.15} is restored by the corresponding coshift $\tilde V\colon\cN_-\to\cN_-$ (\tresp{}\ shift $V\colon\cN_+\to\cN_+$) up to an embedding isometry $V_\cL$.
Therefore, in view of \rdefnss{D4.15}{D4.16}, any pair
\[
\set{\tilde V\colon\cN_-'\to\cN_-';\quad V\colon\cN_+'\to\cN_+'}
\]
of unilateral coshift and shift contained in $U$ and satisfying condition \eqref{E4.32} determines the corresponding regular extension $\Xi$ of form \eqref{E4.27} (see \rthm{T4.21}) up to common left and right constant unitary factors for the upper row $\mat{\vartheta_{11},\vartheta_{12}}$ and the left column $\col\rk{\vartheta_{11},\vartheta_{21}}$, respectively.
\erem

\bdefnnl{\cite{BD20}}{D4.28}
Let $\sigma$ be an orthogonal minimal unitary coupling of form \eqref{E4.15}.
A unilateral input (\tresp{}\ output) channel $\rk{\cN_-,\cL;V_\cL}$ (\tresp{}\ $\rk{\cN_+,\cL;V_\cL}$) of the coupling $\sigma$, as well as the corresponding unilateral coshift $\tilde V\colon\cN_-\to\cN_-$ (\tresp{}\ shift $V\colon\cN_+\to\cN_+$) will be called \emph{internal} for the coupling $\sigma$ if $\cN_-\subseteq\cH_\mathrm{int}$ (\tresp{}\ $\cN_+\subseteq\cH_\mathrm{int}$).
Similarly, changing $\cH_\mathrm{int}$ for $\cH_\mathrm{ext}$, we define the notion of an \emph{external} unilateral input (resp.\ output) channel, as well as, an \emph{external} unilateral coshift (\tresp{}\ shift) for the coupling $\sigma$.
\edefn

The condition 
$\cN_-\subseteq\cH_\mathrm{int}$ (\tresp{}\ $\cN_+\subseteq\cH_\mathrm{int}$) means that the coshift $\tilde V\colon\cN_-\to\cN_-$ (\tresp{}\ $V\colon\cN_+\to\cN_+$) is contained in the fundamental contraction $T$ of the coupling $\sigma$ or, what is the same, of the generated colligation $\Dl$.

The unilateral channels
\[
\rkb{\mmm{\mathring{\cF}},\cF;V_\cF},
\qquad\rkb{\mmp{\mathring{\cG}},\cG;V_\cG}
\]
are called \emph{the principal external unilateral input and output channels} of the coupling $\sigma$, respectively.
The corresponding operators
\[
\tilde V_\sigma\defeq\rk{\rstr{U^\ad}{\mmm{\mathring{\cF}}}}^\ad,
\qquad V_\sigma\defeq\rstr{U}{\mmp{\mathring{\cG}}}
\]
are called \emph{the principal external unilateral coshift and shift of $\sigma$}, respectively.
If the coupling $\sigma$ is minimal, then the largest unilateral coshift $\tilde V_T$ and shift $V_T$ contained in $T$ are called \emph{the principal internal unilateral coshift and shift of $\sigma$}, respectively.

\bdefnl{D4.29}
Let $\theta\in\SFFG$.
An operator function
\beql{E4.41}
\Xi\defeq\begin{pmatrix}\theta_{11}&\theta_{12}\\\theta_{21}&\theta_{22}\end{pmatrix}\in\SF{\cF'\oplus\cF}{\cG'\oplus\cG},
\qquad\theta_{22}\defeq\theta,
\eeq
will be called an \emph{up-leftward Schur extension of the Schur function $\theta$}.
In particular cases, if $\cF'=\set{0}$ or $\cG'=\set{0}$, $\Xi$ can be written in the form
\[
\Xi\defeq\begin{pmatrix}\theta_{12}\\\theta\end{pmatrix}\in\SF{\cF}{\cG'\oplus\cG}\qquad\text{or}
\qquad\Xi\defeq\rk{\theta_{21},\theta}\in\SF{\cF'\oplus\cF}{\cG}
\]
and it will be called an \emph{upward or leftward Schur extension of $\theta$}, respectively.
A Schur extension $\Xi$ of form \eqref{E4.41} will be called \emph{regular} if its boundary value function $\bv{\Xi}$ is a regular extension of the boundary value function $\bv{\theta}$.
\edefn

Obviously, the regularity of a Schur extension $\Xi$ of $\te$ is equivalent to the regularity of the Schur extension $\Xi^\sch$ of $\te^\sch$.

Taking into account Theorems~\ref{T4.21},~\ref{T4.25} and \rrem{R4.27}, we obtain the following result.

\bthmnl{\cite{BD4,BD20,MR3208800}}{T4.30}
Let $\theta\in\SFFG$, let $\sigma$ be an orthogonal minimal coupling of form \eqref{E4.15} such that $\vartheta_\sigma=\bv{\theta}^\sch$.
\benui
\il{T4.30.a} There exists a bijective correspondence between pairs
\beql{E4.43}
\set{\tilde V\colon\mmm{\tcL}\to\mmm{\tcL};\quad V\colon\mmp{\cL}\to\mmp{\cL}}
\eeq
of internal unilateral coshifts and shifts of coupling $\sigma$, respectively, which satisfy the condition
\beql{E4.44}
\mmm{\tcL}\perp\mmp{\cL}
\eeq
and regular up-leftward Schur extensions $\Xi$ of form \eqref{E4.41} for the function $\theta$ up to left and right common constant unitary factors for the upper row and the left column of $\Xi$, respectively.
\il{T4.30.b} The function $\theta$ admits a two-sided inner regular extension of form \eqref{E4.41} if and only if there exists a pair
of internal unilateral coshift and shift of form \eqref{E4.43} for the coupling $\sigma$, respectively, which satisfies the conditions \eqref{E4.44} and 
\beql{E4.45}
\mm{\tcL}=\mathfrak{R}_{\mathring{\cF}},
\qquad\mm{\cL}=\mathfrak{R}_{\mathring{\cG}}.
\eeq
\eenui
\ethm

Note that if there exists a pair of form \eqref{E4.43} satisfying \eqref{E4.44} and \eqref{E4.45}, then the pair $\set{\tilde V_T,V_T}$ also satisfies \eqref{E4.45}, but not necessarily \eqref{E4.44}.

Consider the regular upward and leftward Schur extensions
\beql{E4.46}
\Omega_0\defeq\begin{pmatrix}\theta_{12}^{(0)}\\\theta\end{pmatrix}\in\SF{\cF}{\cG_0\oplus\cG},
\qquad\Upsilon_0\defeq\mat{\theta_{21}^{(0)},\theta}\in\SF{\cF_0\oplus\cF}{\cG}
\eeq
of a Schur function $\theta\in\SFFG$ which, by \rthm{T4.30}, correspond to the principal internal unilateral shift $V_T$ and coshift $\tilde V_T$ of the coupling $\sigma$, respectively.
In this case, $\bv{\theta_{12}^{(0)}}=\vartheta_{\tau_{21}^{(0)}}^\sch$, $\bv{\theta_{21}^{(0)}}=\vartheta_{\tau_{12}^{(0)}}^\sch$, where
\beql{E4.47}
\tau_{12}^{(0)}\defeq\rk{\cH,\cF_0,\cG;U,V_{\cF_0},V_{\cG}},
\qquad\tau_{21}^{(0)}\defeq\rk{\cH,\cF,\cG_0;U,V_{\cF},V_{\cG_0}} 
\eeq
are the orthogonal couplings of type \eqref{E4.33} which are not necessarily minimal and generated by
\beql{E4.48}
\tilde V_T\colon\mmm{U\tcL_0}\to\mmm{U\tcL_0},
\qquad V_T\colon\mmp{\cL_0}\to\mmp{\cL_0}
\eeq
up to embedding isometries $V_{\cF_0}$, $V_{\cG_0}$ such that $\mathring{\cF}_0\defeq U\tcL_0$, $\mathring{\cG}_0\defeq\cL_0$, respectively (see \rcor{cor1.9}).

\bthmnl{\zitaa{MR3208800}{\cthm{6.1}}, \zitaa{BD20}{\cthm{7.14}}}{T4.31}
Let $\theta\in\SFFG$, let $\sigma$ be an orthogonal minimal unitary coupling of form \eqref{E4.15} such that $\vartheta_\sigma=\bv{\theta}^\sch$, and let $T\defeq T_\sigma$.
Let $\Omega_0\in\SF{\cF}{\cG_0\oplus\cG}$ \textnormal{(}\tresp{}\ $\Upsilon_0\in\SF{\cF_0\oplus\cF}{\cG}$\textnormal{)} be the regular upward \textnormal{(}\tresp{}\ leftward\textnormal{)} Schur extension of form \eqref{E4.46} for $\theta$ corresponding to the principal internal unilateral shift $V_T$ \textnormal{(}resp.\ coshift $\tilde V_T$\textnormal{)} of the coupling $\sigma$.
Then $\theta_{12}^{(0)}=\theta_r$ \textnormal{(}resp.\ $\theta_{21}^{(0)}=\theta_\ell$\textnormal{)} up to a left \textnormal{(}resp.\ right\textnormal{)} constant unitary factor, where $\theta_r\in\SF{\cF}{\cG_0}$ \textnormal{(}resp.\ $\theta_\ell\in\SF{\cF_0}{\cG}$\textnormal{)} is the right \textnormal{(}resp.\ left\textnormal{)} defect function of $\theta$.
\ethm

In the following, for convenience, we will redesignate $\theta_r$ (resp.\ $\theta_\ell$), namely, $\varphi\defeq\theta_r$ (resp.\ $\psi\defeq\theta_\ell$) and call it \emph{the defect} (resp.\ \emph{$\ast$\nobreakdash-defect}) \emph{function of $\theta$ in the Schur class}.
From now on, taking into account \rthmsss{T4.8}{T4.30}{T4.31}, the defect (resp.\ $\ast$\nobreakdash-defect) function $\varphi\in\SF{\cF}{\cG_0}$ (resp.\ $\psi\in\SF{\cF_0}{\cG}$) of $\theta\in\SFFG$ will be considered only up to a left (resp.\ right) constant unitary factor.
Thus, from \eqref{E4.2} (resp.\ \eqref{E4.3}) it follows that
\[
\varphi\rk{\ze}=V_{\cG_0}^\ad\rk{\IH-\zeta T}^\inv F
\qquad(\text{resp.\ }\psi\rk{\ze}=G\rk{\IH-\zeta T}^\inv V_{\cF_0}),
\qquad\zeta\in\D,
\]
where $V_{\cG_0}$ (resp.\ $V_{\cF_0}$) is an arbitrary embedding isometry such that $\mathring{\cG}_0=\cL_0$ (resp.\ $\mathring{\cF}_0=U\tcL_0$).

From \rthm{T4.31} and \eqref{E4.47} for the defect (resp.\ $*$-defect) function $\varphi\in\SF{\cF}{\cGo}$ (resp.\ $\psi\in\SF{\cFo}{\cG}$) of a Schur function $\te\in\SFFG$ such that $\bv\te=\vte_\sg^\sch$, where $\sg$ is an orthogonal minimal coupling of form \eqref{E4.15}, it follows that $\bv\varphi=\vte_{\sg_\varphi}^\sch$ (resp.\ $\bv\psi=\vte_{\sg_\psi}^\sch$), where $\sg_\varphi\defeq\tau_{21}^\oo$ (resp.\ $\sg_\psi\defeq\tau_{12}^\oo$), \tie,
\[
\sg_\varphi\defeq\rk{\cH,\cF,\cGo;U,V_\cF,V_\cGo}
\qquad\text{(resp.\ }\sg_\psi\defeq\rk{\cH,\cFo,\cG;U,V_\cFo,V_\cG}\text{).}
\]
This orthogonal coupling is not necessarily minimal.
The corresponding unitary colligation $\Dl_\varphi$ (resp.\ $\Dl_\psi$) generated by the coupling $\sg_\varphi$ (resp.\ $\sg_\psi$) such that $\varphi=\te_{\Dl_\varphi}$ (resp.\ $\psi=\te_{\Dl_\psi}$) can be constructed as it was done in \rsubsec{subsec4.5-1003} (see \eqref{E4.38}--\eqref{E4.39}).

\rcor{C4.22} and \rthm{T4.31} enable us to introduce an important concept.

\bdefnnl{\cite{MR1491502}}{D4.32}
Let $\theta\in\SFFG$ and let $\vphi\in\SF{\cF}{\cGo}$, $\psi\in\SF{\cFo}{\cG}$ be its defect and $\ast$\nbd-defect functions, respectively.
Assume that $\sigma$ is an orthogonal minimal unitary coupling of form \eqref{E4.15} such that $\vte_\sg=\bv\te^\sch$.
The unique (up to constant unitary factors on both sides) operator function $\chi\in  CM\ek{\cFo,\cGo}$ such that the operator functions
\[
\Xio\defeq\begin{pmatrix}
    \chi&\bv\vphi\\
    \bv\psi&\bv\te
\end{pmatrix}\in  CM\ek{\cFo\oplus\cF,\cGo\oplus\cG}
\]
is a regular up-leftward extension of the function $\bv\te\in  CM\ek{\cF,\cG}$ is called \emph{the suboperator of internal scattering of the coupling $\sigma$}.
\edefn

The justification of such a name of the function $\chi$ is that $\chi\defeq\vte_{\sg_\chi}^\sch$, where
\[
\sg_\chi
\defeq\rk{\cH,\cFo,\cGo;U,V_\cFo,V_\cGo}
\]
is a coupling (not necessarily orthogonal) of type $\tau_{11}$ from \eqref{E4.33} generated by the pair of the principal internal unilateral coshift $\tilde V_T$ and shift $V_T$ of form \eqref{E4.48} for $\sg$ and the pair of embedding isometries $V_\cFo$ and $V_\cGo$ from \eqref{E4.47}.

By virtue of \rcor{C4.22} and \rthm{T4.23}, the function $\chi\in  CM\ek{\cFo,\cGo}$ is a unique solution of some extremal problem.

\bthmnl{\cite{MR1491502}}{T4.33}
Let $\te\in\SFFG$ and let $\vphi\in\SF{\cF}{\cGo}$, $\psi\in\SF{\cFo}{\cG}$ be its defect and $\ast$\nbd-defect functions, respectively.
Then the suboperator $\chi\in  CM\ek{\cFo,\cGo}$ of internal scattering is the unique function amongst all functions $\xi\in  CM\ek{\cFo,\cGo}$ at which the infimum
\[
\alpha\defeq\inf_\xi\norm{\Xi}_{L^\infty\ek{\cFo\oplus\cF,\cGo\oplus\cG}},
\qquad\Xi\defeq\begin{pmatrix}
    \xi&\bv\vphi\\
    \bv\psi&\bv\te
\end{pmatrix},
\]
is attained.
Moreover, $\alpha=1$.
\ethm

In order to obtain an explicit representation of the function $\chi$ in terms of $\bv\te$, $\bv\vphi$, and $\bv\psi$, we need the following generalization of the Moore--Penrose inverse matrix.
If $A\in\ek{\cF,\cG}$, we denote by $A^\gi$ the densely defined operator (closed, but not necessarily bounded) with the domain
\[
\dom A^\gi
\defeq\ran A\oplus\rk{\ran A}^\bot\subseteq\cG
\]
and the range
\[
\ran A^\gi
\defeq\rk{\Ker A}^\bot\subseteq\cF
\]
setting $A^\gi g\defeq0$ if $g\in\rk{\ran A}^\bot$, and $A^\gi g\defeq f$ if $g\in\ran A$, $Af=g$, $f\in\rk{\Ker A}^\bot$.

\bleml{L4.34}
Let $\te\in\SFFG$ and let $\vphi\in\SF{\cF}{\cGo}$, $\psi\in\SF{\cFo}{\cG}$ be its defect and $\ast$\nbd-defect functions in the Schur class, respectively.
Let also $\Pi\in  CM\ek{\cF}$, $\Sg\in  CM\ek{\cG}$ be the defect and $\ast$\nbd-defect functions of $\bv\te\in  CM\ek{\cF,\cG}$ in the class of contractive measurable functions, respectively, defined by \eqref{E4.12}--\eqref{E4.13}.
Then the inclusions
\beql{E4.49}
\ran\bv\vphi^\ad\rk{t}\subseteq\ran\Pi\rk{t},
\qquad\ran\bv\psi\rk{t}\subseteq\ran\Sg\rk{t}
\eeq
hold true for almost all $t\in\T$.
\elem
\bproof
Inclusions \eqref{E4.49} follow from the non-negativity of the matrices
\[
\begin{pmatrix}
    \Pi^2&\bv\vphi^\ad\\
    \bv\vphi&\Iu{\cGo}
\end{pmatrix}
\qquad\text{and}
\qquad\begin{pmatrix}
    \Sg^2&\bv\psi\\
    \bv\psi^\ad&\Iu{\cFo}
\end{pmatrix}
\]
which in turn follows from \rthm{T4.8} (see, \teg{}, \zitaa{MR3208800}{\clem{3.1}}).
\eproof

\bthmnl{\zitaa{MR3208800}{\cthm{6.8}}}{T4.35}
Let the conditions of \rlem{L4.34} be satisfied.
The function $\chi\in  CM\ek{\cFo,\cGo}$ admits the representation $\chi=-\omega_0\bv\te^\ad\upsilon_0$, where $\omega_0\in  CM\ek{\cF,\cGo}$ and $\upsilon_0\in  CM\ek{\cFo,\cG}$ are defined by the equalities
\[
\omega_0^\ad\rk{t}\defeq\rkb{\Pi\rk{t}}^\gi\bv\vphi^\ad\rk{t},
\qquad\upsilon_0\rk{t}\defeq\rkb{\Sg\rk{t}}^\gi\bv\psi\rk{t},
\qquad t\in\T,
\]
and satisfy the conditions
\[
\omega_0\rk{t}\omega_0^\ad\rk{t}=\Iu{\cGo},
\qquad\upsilon_0^\ad\rk{t}\upsilon_0\rk{t}=\Iu{\cFo}
\qquad\text{\tae{}\ on }\T.
\]
\ethm

Another representation of $\chi$ derived by different methods can be found in \cite{MR1737611} or in \zitaa{MR3208800}{\cthm{6.3}}.
Using the functions $\vphi$, $\psi$, and $\chi$, we can obtain a description of Schur extensions for $\te$ (\tcf{}\ with \rthm{T4.30}).

\begin{thm}\label{th4.36}
    Let $\te\in\SFFG$ and let $\vphi\in\SF{\cF}{\cGo}$, $\psi\in\SF{\cFo}{\cG}$ be its defect and $\ast$\nbd-defect functions, respectively, and let $\chi\in  CM\ek{\cFo,\cGo}$ be the suboperator of internal scattering.
    \benui
    \il{T4.36.a}\textnormal{(\cite{BD4},\zitaa{BD20}{\cthm{8.22(b)}})} 
    There exists a bijective correspondence between pairs
    \beql{E4.50}
    \setb{\omega\in\SF{\cGo}{\cG'},\upsilon\in\SF{\cF'}{\cFo}}
    \eeq
    of $\ast$\nbd-inner and inner operator functions, respectively, such that
    \beql{E4.51}
    \bv\omega\chi\bv\upsilon
    \in L_+^\infty\ek{\cF',\cG'}
    \eeq
    and regular up-leftward Schur extensions $\Xi$ of form \eqref{E4.41} for the function $\te$.
    This correspondence is established by the equalities
    \[
    \te_{12}=\omega\vphi,
    \qquad\te_{21}=\psi\upsilon,
    \qquad\bv{\te_{11}}=\bv\omega\chi\bv\upsilon.
    \]
    \il{T4.36.b}\textnormal{(\zitaa{MR2013544}{\cthm{2}}, \zitaa{BD20}{\cthm{8.22(c)}})} 
    The function $\te$ admits a two-sided inner regular extension of form \eqref{E4.41} if and only if the two factorizations
    \beql{E4.52}
    \bv\vphi^\ad\bv\vphi=\Pi^2,
    \qquad\bv\psi\,\bv\psi^\ad=\Sg^2
    \eeq
    hold true where $\Pi\in CM\ek{\cF}$ and $\Sg\in CM\ek{\cG}$ are defined by \eqref{E4.12}--\eqref{E4.13} and there exists a pair $\set{\omega,\upsilon}$ of two-sided inner operator functions of form \eqref{E4.50} satisfying \eqref{E4.51}.
    \eenui
\end{thm}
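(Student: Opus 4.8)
The plan is to reduce both parts to the geometry of internal unilateral channels of a fixed orthogonal minimal coupling $\sg$ of form \eqref{E4.15} with $\vte_\sg=\bv\te^\sch$, and then to invoke \rthm{T4.30}. For part~\eqref{T4.36.a} I would begin from the bijection of \rthmp{T4.30}{T4.30.a} between regular up-leftward Schur extensions $\Xi$ of $\te$ and pairs $\setb{\tilde V,V}$ of internal coshift and shift of $\sg$ satisfying the orthogonality \eqref{E4.44}. The first step is to parametrize these internal objects by inner functions: an internal shift $V$ is a subshift of the principal internal shift $V_T$, so under the Fourier representation of \rdefn{D4.15} its invariant subspace $\mmp{\cL}$ corresponds to a shift-invariant subspace of $L_+^2\rk{\cGo}$, which by the Beurling--Lax theorem has the form $\Theta L_+^2\rk{\cG'}$ for an inner $\Theta\in\SF{\cG'}{\cGo}$; setting $\omega\defeq\Theta^\sch\in\SF{\cGo}{\cG'}$ produces a $\ast$\nbd-inner function. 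Dually, an internal coshift $\tilde V$ is described by an inner $\upsilon\in\SF{\cF'}{\cFo}$. This yields the target pairs \eqref{E4.50}.

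Next I would compute the three non-principal blocks of $\Xi$. By \rthm{T4.31} the principal pair $\setb{\tilde V_T,V_T}$ returns exactly the defect functions, $\te_{12}^{\rk{0}}=\vphi$ and $\te_{21}^{\rk{0}}=\psi$; replacing $V_T$ by the subshift $V$ and $\tilde V_T$ by $\tilde V$ amounts to refining the principal channels by $\omega$ and $\upsilon$, and the multiplicativity of the scattering suboperator under such refinement (via \rthm{T4.25} and the coefficient formula \eqref{E4.37}) gives $\te_{12}=\omega\vphi$ and $\te_{21}=\psi\upsilon$. For the corner I would use \rcor{C4.22}, by which $\te_{11}$ is already forced by the other three blocks, together with \rdefn{D4.32}, where the principal pair defines the internal scattering suboperator $\chi=\vte_{\sg_\chi}^\sch$ through the coupling $\tau_{11}$ of \eqref{E4.33}; running the same refinement through $\tau_{11}$ then gives $\bv{\te_{11}}=\bv\omega\chi\bv\upsilon$. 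Condition \eqref{E4.51}, namely $\bv\omega\chi\bv\upsilon\in L_+^\infty\ek{\cF',\cG'}$, is precisely the requirement that this corner be the boundary value of an $H^\infty$ function, so it is what upgrades the contractive-measurable extension of \rthm{T4.21} to a genuine Schur extension; bijectivity and the residual left/right constant unitary factors are inherited directly from \rthm{T4.30}.

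For part~\eqref{T4.36.b} I would combine \rthmp{T4.30}{T4.30.b} with the block calculus just set up. If $\Xi$ of form \eqref{E4.41} is two-sided inner, then comparing the lower-right corners of $\bv\Xi^\ad\bv\Xi=\Iu{\cF'\oplus\cF}$ and $\bv\Xi\bv\Xi^\ad=\Iu{\cG'\oplus\cG}$ forces $\bv{\te_{12}}^\ad\bv{\te_{12}}=\IF-\bv\te^\ad\bv\te=\Pi^2$ and $\bv{\te_{21}}\bv{\te_{21}}^\ad=\IG-\bv\te\bv\te^\ad=\Sg^2$, where $\Pi,\Sg$ are as in \eqref{E4.12}--\eqref{E4.13}; inserting $\te_{12}=\omega\vphi$ and $\te_{21}=\psi\upsilon$ and using that $\omega,\upsilon$ must now be two-sided inner (so $\bv\omega^\ad\bv\omega=\Iu{\cGo}$ and $\bv\upsilon\bv\upsilon^\ad=\Iu{\cFo}$ \tae{}) converts these into the factorizations \eqref{E4.52}. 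Conversely, given \eqref{E4.52} and a two-sided inner pair $\setb{\omega,\upsilon}$ with \eqref{E4.51}, part~\eqref{T4.36.a} already delivers a regular Schur extension $\Xi$, and reinserting the factorizations into the full products $\bv\Xi^\ad\bv\Xi$ and $\bv\Xi\bv\Xi^\ad$ — now controlling the mixed terms through the identities for $\chi$ in \rthm{T4.35} — shows $\bv\Xi$ is unitary \tae{}, \tie{}\ $\Xi$ is two-sided inner.

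The hard part will be the two refinement computations behind $\te_{12}=\omega\vphi$, $\te_{21}=\psi\upsilon$, and above all $\bv{\te_{11}}=\bv\omega\chi\bv\upsilon$: one must track how the principal internal scattering $\chi$ transforms when the principal channels are replaced by their $\omega$\nbd- and $\upsilon$\nbd-refinements, keeping strict account of the associate operation $\sch$, of which factor multiplies on the left and which on the right, and of the constant unitary factors left free by \rthm{T4.30}. A second delicate point, in part~\eqref{T4.36.b}, is to deduce the \emph{global} unitarity of $\bv\Xi$ — not merely the two diagonal corner identities — from \eqref{E4.52} together with \eqref{E4.51}; here I expect \rlem{L4.34} and the explicit representation of $\chi$ in \rthm{T4.35} to be the tools that pin down the off-diagonal entries.
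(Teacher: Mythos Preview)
The paper does not supply its own proof of \rthm{th4.36}; the theorem is stated with attributions to \cite{BD4}, \zitaa{BD20}{\cthm{8.22}} and \zitaa{MR2013544}{\cthm{2}}, and the surrounding text only comments on the Darlington problem and on the history of the function~$\chi$. So there is nothing in the paper to compare against line by line; what can be assessed is whether your outline is the natural deduction from the machinery the paper \emph{does} develop (\rthmssss{T4.21}{T4.25}{T4.30}{T4.31}, \rcor{C4.22}, \rdefn{D4.32}), and it largely is: reduce to the channel picture of \rthm{T4.30}, parametrize internal subshifts of $V_T$ and sub-coshifts of $\tilde V_T$ by Beurling--Lax inner functions, and read off the blocks.

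Two places deserve tightening. First, your account of why \eqref{E4.51} appears is slightly skewed. The cleanest route is not ``upgrading \rthm{T4.21} to Schur'' but rather: the orthogonality \eqref{E4.44} in \rthmp{T4.30}{T4.30.a} is exactly the statement that the auxiliary coupling $\tau_{11}$ of \eqref{E4.33} is orthogonal, and by \rthm{T4.25} this is equivalent to $\vte_{\tau_{11}}\in L_+^\infty$; since under your Beurling--Lax parametrization $\vte_{\tau_{11}}^\sch=\bv\omega\chi\bv\upsilon$, this \emph{is} \eqref{E4.51}. Second, in part~\eqref{T4.36.b} you write that ``$\omega,\upsilon$ must now be two-sided inner'' as if this followed from the block identities $\bv{\te_{12}}^\ad\bv{\te_{12}}=\Pi^2$, $\bv{\te_{21}}\bv{\te_{21}}^\ad=\Sg^2$; it does not, at least not without using that $\vphi$ is outer and $\psi$ is $\ast$\nbd-outer (\rcor{C4.9}) together with the maximal-minorant property (\rthm{T4.8}). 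The more transparent argument is geometric: invoke \rthmp{T4.30}{T4.30.b} directly, so that the extra condition \eqref{E4.45} forces the sub(co)shift to have the same bilateral span as the principal one, which in the Beurling--Lax picture is precisely two-sided innerness of $\omega$ and $\upsilon$, and simultaneously forces the factorizations \eqref{E4.52}. Your acknowledged ``hard part'' --- keeping the $\sch$, the side of multiplication, and the free unitary constants straight in the computation of $\bv{\te_{11}}=\bv\omega\chi\bv\upsilon$ --- is real bookkeeping but not a conceptual obstacle once the $\tau_{11}$ interpretation is in place.
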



The problem solved in \rpart{T4.36.b} of \rthm{th4.36} (but without the assumption of regularity) is known as the problem of \emph{the Darlington synthesis} arising in the electrical network theory (see \cite{MR0001658}).

The function $\chi\in  CM\ek{\cFo,\cGo}$ uniquely determined by the functions $\te\in\SFFG$, $\vphi\in\SF{\cF}{\cGo}$, and $\psi\in\SF{\cFo}{\cG}$ was earlier used by D.~Z.~Arov in the case of the validity of both factorizations \eqref{E4.52} for a description of all two-sided inner regular extensions of $\te$ (see \zitaa{MR2013544}{\cthm{2}}).
In the general case, the uniqueness of the function $\chi$ was announced in \cite{MR1491502}, where $\chi$ was interpreted as the suboperator of internal scattering of a corresponding system.
This made it possible to obtain a number of important results in the theory of passive scattering systems (see, \teg, \cite{MR1752830,MR1957950}).

\subsection{Regular Schur extensions of Schur functions and loss decrease in unitary couplings}\label{subsec4.7-1003}

Unitary couplings can be considered as mathematical models of discrete time scattering system of various physical nature (see, \teg{}, references in \cite{A-1,MR2013544}).

Following the ideas in \cite{A-1}, we can introduce the notion of a coupling without losses.

\bdefnl{D4.37}
\benui
\il{D4.37.a} A unitary coupling $\sigma$ of form \eqref{E4.15} we call \emph{lossless at the input} (resp.\ \emph{output}) if values $\vte_\sg\rk{t}\in\ek{\cG,\cF}$ of its scattering suboperator are isometric (resp.\ coisometric) at almost all $t\in\T$.
\il{D4.37.b} The coupling $\sg$ is termed \emph{lossless} if it is lossless at both the input and output, \tie{}, values $\vte_\sg\rk{t}$ are unitary at almost all $t\in\T$.
Otherwise, it is said to be \emph{a coupling with losses}.
\eenui
\edefn

\blemnl{\cite{A-A}, \zitaa{BD-1}{\cpart{II}}}{L4.38}
Let $\sg$ be a minimal unitary coupling of form \eqref{E4.15}.
\benui
\il{L4.38.a} $\sg$ is lossless at the input \nrk{resp. output} if and only if
\[
\mm{\mathring{\cF}}=\cH
\qquad(\text{resp.\ }\mm{\mathring{\cG}}=\cH).
\]
\il{L4.38.b} $\sg$ is lossless if and only if
\[
\mm{\mathring{\cF}}
=\mm{\mathring{\cG}}
(=\cH).
\]
\eenui
\elem

To illustrate this assertion from a geometrical point of view, assume, for example, that $\mm{\mathring{\cG}}\neq\cH$.
Then there exists an input ``signal'' $h\in\mmm{\rcF}$ such that $h\notin\mm{\rcG}$, \tie{}, $h=h'+h''$, $h'\in\mm{\rcG}$, $h''\in\mathfrak{R}_{\rcG}$, $h''\neq0$.
For all time moments $n=1,2,\dotsc$ the evolution (propagation) $\{U^nh\}_{n=1}^\infty$ of the ``signal'' $h$ satisfies the conditions
\[
U^nh=U^nh'+U^nh'',
\quad U^nh'\in\mm{\rcG},
\quad U^nh''\in\mathfrak{R}_{\rcG},
\quad\norm{U^nh''}=\norm{h''}>0.
\]
Thus, in the course of evolution, only the part $\{U^nh'\}_{n=1}^\infty$ of $\{U^nh\}_{n=1}^\infty$ remains in the subspace $\mm{\rcG}$, while the nonzero part $\{U^nh''\}_{n=1}^\infty$ can be interpreted as scattering losses of the ``signal'' at the output.
In the dual way, we can illustrate losses at the input.

Assume that $\Pi_\sg\in  CM\ek{\cG}$ and $\Sg_\sg\in  CM\ek{\cF}$ be the defect and $\ast$\nbd-defect functions of $\vte_\sg\in  CM\ek{\cG,\cF}$ in the class of contractive measurable functions, \tie,
\[
\Pi_\sg\rk{t}\defeq\rkb{\IG-\vte_\sg^\ad\rk{t} \vte_\sg\rk{t}}^{1/2},\quad
\Sigma_\sg\rk{t}\defeq\rkb{\IF-\vte_\sg\rk{t} \vte_\sg^\ad\rk{t}}^{1/2},\qquad
t\in\T.
\]
The quantity $\norm{\Pi_\sg}$ (resp.\ $\norm{\Sg_\sg}$) is a measure of non-isometricity (resp.\ non-coisometricity) of $\vte_\sg$.
Hence, the quantity $\ell_\sg\defeq\max\set{\norm{\Pi_\sg},\norm{\Sg_\sg}}$ can be considered as a characteristic of the amount of losses in $\sg$.
As it is shown in \zitaa{BD-1}{\cpart{II}, \cssec{7.8}},
\[
\ell_\sg
=\norm{P_\mm{\rcF}-P_\mm{\rcG}},
\]
\tie{}, $\ell_\sg$ is the aperture of the subspaces $\mm{\rcF}$ and $\mm{\rcG}$ (see, \teg, \cite{MR1255973}).

Further in this subsection, for simplicity, we will consider only orthogonal couplings.
From \rdefn{D4.37} and Theorems~\ref{T4.25}--\ref{T4.26} we obtain that such a coupling $\sg$ of form \eqref{E4.15} is lossless at the input (resp.\ output) if and only if $\vte_\sg=\bv\te^\sch$, where $\te\in\SFFG$ is an $\ast$\nbd-inner (resp.\ inner) function.
Moreover, $\sg$ is lossless if and only if $\te$ is two-sided inner.

\blemnl{\zitaa{BD-1}{\cpart{II}}}{L4.39}
Let $\sg$ be an orthogonal unitary coupling of form \eqref{E4.15}.
\benui
\il{L4.39.a} $\sg$ is lossless at the input \nrk{resp. output} if and only if the inclusion
\beql{E4.53}
\mmp{\rcG}\subset\mmp{\rcF}
\qquad(\text{resp.\ }\mmm{\rcF}\subset\mmm{\rcG})
\eeq
is valid.
\il{L4.39.b} $\sg$ is lossless if and only if both inclusions \eqref{E4.53} are valid.
\eenui
\elem

For any orthogonal minimal coupling $\sg$ of form \eqref{E4.15} the ``signal'' propagation in the space $\cH$ can be schematized as the following diagram
\beql{E4.54}
\boxed{\mmm{\rcF}}
\quad\overset{\longrightarrow}{\underset{\longrightarrow}{\oplus}}\quad\boxed{\cH_\mathrm{int}^{\rk{\sg}}}
\quad\overset{\longrightarrow}{\underset{\longrightarrow}{\oplus}}\quad\boxed{\mmp{\rcG}}\;.
\eeq
This means that the ``signal'' through the principal input channel (incoming subspace) enters the internal space (possibly with some losses), from where it then exits (also with some possible losses) through the principal output channel (outgoing subspace).

Let $\te\in\SFFG$ be such that $\bv\te=\vte_\sg^\sch$ and let
\[
\Omega
\defeq\begin{pmatrix}\te_{12}\\\te\end{pmatrix}
\in\SF{\cF}{\cG'\oplus\cG}
\]
be a regular upward Schur extension of $\te$.
By \rthm{T4.30}, there exists a unique internal unilateral output channel $\rk{\cN_+,\cG';V_{\cG'}}$ such that $\bv\Omega=\vte_\tau^\sch$, where the corresponding orthogonal coupling $\tau$ has the form
\[
\tau\defeq\rk{\cH,\cF,\cG'\oplus\cG;U,V_\cF,V_{\cG'\oplus\cG}},
\qquad
V_{\cG'\oplus\cG}:=V_{\cG'}P_{\cG'}+V_{\cG}P_{\cG}.
\]
Denoting $\cH_\mathrm{int}^{\rk{\tau}}\defeq\cH_\mathrm{int}^{\rk{\sg}}\ominus\mmp{\rcG'}$, we obtain for $\tau$ the diagram
\beql{E4.55}
\quad
\boxed{\mmm{\rcF}}\quad\overset{\longrightarrow}{\underset{\longrightarrow}{\oplus}}\quad\boxed{\begin{array}{c}\vphantom{\boxed{\mmm{\rcG'}}}\\\cH_\mathrm{int}^{\rk{\tau}}\\\vphantom{\boxed{\mmm{\rcG}}}\end{array}}\;
\begin{array}{cc}
    \longrightarrow&\boxed{\mmp{\rcG'}}\\
    \oplus&\oplus\\
    \longrightarrow&\boxed{\mmp{\rcG}}
\end{array}.
\eeq
From \eqref{E4.54}--\eqref{E4.55} we see that the transition from $\sg$ to $\tau$ is realized by extending the principal external unilateral output channel of $\sg$ by adding some internal unilateral output one of $\sg$.
Since, obviously, $\Pi_\tau\leq\Pi_\sg$, we obtain that this transition leads to a loss decrease at the output of $\tau$ in comparison with $\sg$.

Let
\beql{E4.56}
\Omega_0
\defeq\begin{pmatrix}\vphi\\\te\end{pmatrix}
\in\SF{\cF}{\cGo\oplus\cG},
\eeq
where $\vphi\in\SF{\cF}{\cGo}$ is the defect function of $\te$ in the Schur class.
By \rthm{T4.31}, $\Omega_0$ is a regular upward Schur extension of $\te$ corresponding to the principal internal unilateral shift $V_T$ for $\sg$.
Hence, the corresponding orthogonal coupling has the form
\[
    \tau_0\defeq\rk{\cH,\cF,\cGo\oplus\cG;U,V_\cF,V_{\cGo\oplus\cG}}
\]
where $V_{\cGo\oplus\cG}\defeq V_\cGo P_\cGo+V_\cG P_\cG$ and, obviously, $\vte_{\tau_0}=\bv{\Omega_0}^\sch$.
Herewith the orthogonal coupling $\tau_0$ has the largest principal external unilateral output channel among all such orthogonal couplings $\tau$ considered above.
Therefore, we can say that $\tau_0$ has the least possible losses at the output in comparison with other such $\tau$.

The problem of the existence of a such orthogonal minimal coupling $\tau_0$ corresponding to $\Omega_0$ of form \eqref{E4.56} and being lossless at the output is reduced to the existence of the factorization $\bv\vphi^\ad\bv\vphi=\IF-\bv\te^\ad\bv\te$.
The latter is equivalent to the validity of the equality
\[
\mm{\rcG_0}
=\mathfrak{R}_\rcG
\]
(see \rthm{T4.30}).

In the dual way, we can describe the connection between regular leftward Schur extensions of $\te\in\SFFG$ and loss decrease at the input of the corresponding orthogonal minimal couplings.

If $\Xi\in\SF{\scF\oplus\cF}{\scG\oplus\cG}$ is an arbitrary regular up-leftward Schur extension of form \eqref{E4.41} for $\te\in\SFFG$, then $\bv\Xi=\vte_\tau^\sch$, where $\tau$ is an orthogonal minimal coupling of the form
\[
\tau
\defeq\rk{\cH,\cF'\oplus\cF,\cG'\oplus\cG;U,V_{\cF'\oplus\cF},V_{\cG'\oplus\cG}}.
\]
By \rthm{T4.30}, $\tau$ is obtained from the coupling $\sg$ of form \eqref{E4.15} such that $\vte_\sg=\bv\te^\sch$ by extending both the principal external unilateral channels of $\sg$ through the addition of the corresponding mutually orthogonal internal unilateral channels of $\sg$.
This can be illustrated by the following diagram
\[
\begin{array}{cc}
    \boxed{\mmm{\rcF'}}&\longrightarrow\\
    \oplus&\oplus\\
    \boxed{\mmm{\rcF}}&\longrightarrow
\end{array}
\quad\boxed{\begin{array}{c}
        \vphantom{\boxed{\mmm{\rcF'}}}\\
        \cH_\mathrm{int}^{\rk{\tau}}\\
        \vphantom{\boxed{\mmm{\rcF}}}
    \end{array}}
\quad\begin{array}{cc}
    \longrightarrow&\boxed{\mmp{\rcG'}}\\
    \oplus&\oplus\\
    \longrightarrow&\boxed{\mmp{\rcG}}
\end{array}\;,
\]
where $\cH_\mathrm{int}^{\rk{\tau}}\defeq\cH_\mathrm{int}^{\rk{\sg}}\ominus\rk{\mmm{\rcF'}\oplus\mmp{\rcG'}}$. 
Amongst all such orthogonal minimal couplings $\tau$ there exist those that are \emph{non-extensible}.
The latter means that neither the input nor the output principal external unilateral channel of $\tau$ can be extended by adding of an internal one without losing the coupling orthogonality (see \zitaa{BD20}{\cssec{8.4}}).
The problem of the existence of a lossless orthogonal coupling $\tau_0$ amongst them is equivalent to the existence of a two-sided inner regular extension of $\te$ (see \rthm{th4.36}\eqref{T4.36.b}).
From a geometrical point of view, this is equivalent to the existence of a pair
\[
\set{\rk{\cN_-,\scF;V_\scF},\;\rk{\cN_+,\scG;V_\scG}}
\]
of internal unilateral channels of $\sg$ such that
\[
\mm{\rcF'}=\mathfrak{R}_\rcF,
\qquad\mm{\rcG'}=\mathfrak{R}_\rcG,
\qquad\mmm{\rcF'}\perp\mmp{\rcG'}
\]
(see \rthm{T4.30}\eqref{T4.30.b}).

Unitary couplings with losses are considered by D.~Z.~Arov (\cite{A-1,MR2013544}).
The presentation of these questions in terms of unitary colligations and open systems see in \cite{BD4}.

\subsection{Pseudocontinuability of Schur functions and the Darlington synthesis}\label{subsec4.8-1003}

In this section we need one important class of functions first considered by R.~Nevanlinna in \cite{MR0417418} for the scalar case (see, \teg, \zitaa{RR1}{\cssec{4.2}}).

\bdefnl{D4.40}
A meromorphic \tval{$\ek{\cF,\cG}$} function $\kappa$ defined on $\D$ is called a \emph{bounded type} one if it can be represented as a quotient of two functions $\mu\in\SFDFG$ and $\eta\in\SFDs$.
This class of bounded type functions is also termed \emph{the Nevanlinna class on $\D$} and is denoted by $\NFDFG$.
\edefn

If $\De\defeq\setaca{\ze\in\C}{\abs{\ze}>1}\cup\set{\infty}$ is the exterior of the unit disk $\D$ in the extended plane $\clo{\C}$, we can introduce the class $\NFDeFG$ in the same way.

As earlier, we denote by $\bv\ka$ the boundary value functions on $\T$ for functions $\ka\in\NFDFG$ as well as for $\ka\in\NFDeFG$.

\blemnl{\cite{MR0322538}}{L4.41}
A function $\bv\ka\in\LinfTFG$ is the boundary value function of some function $\ka\in\NFDeFG$ if and only if there exists a scalar inner function $\ups\in\SFDs$ such that $\bv\ups\,\bv\ka^\ad\in\LinfpTGF$.
\elem
\bproof
Clearly, it suffices to consider the case $\bv\ka\in\CMTFG$.

Let $\bv\ups\,\bv\ka^\ad\in\LinfpTGF$, where $\ups\in\SFDs$ is inner.
If $\rho\in\SFDGF$ is the function such that $\bv\rho=\bv\ups\,\bv\ka^\ad$, then $\ka\defeq\mu/\eta$, $\mu\in\SFDeFG$, $\eta\in\SFDes$, where
\[
\mu\rk{\ze}\defeq\rho^\ad\rk{1/\ko\ze},
\qquad\eta\rk{\ze}\defeq\ko{\ups\rkb{1/\ko\ze}},
\qquad\ze\in\De.
\]

Conversely, let $\ka\defeq\mu/\eta$, $\mu\in\SFDeFG$, $\eta\in\SFDes$.
Denoting $\nu\rk{\ze}\defeq\ko{\eta\rk{1/\ko\ze}}$, $\ze\in\D$, we obtain $\bv\nu\,\bv\ka^\ad\in\LinfpTGF$ since $\rho\in\SFDGF$, where
\[
\rho\rk{\ze}\defeq\mu^\ad\rk{1/\ko\ze}
\rkb{=\nu\rk{\ze}\ka^\ad\rk{1/\ko\ze}},
\qquad\ze\in\D.
\]
If $\nu=\nu_\mri\nu_\mro$ is the inner-outer factorization of $\nu$ and $\ups\defeq\nu_\mri$, then $\bv\ups\,\bv\ka^\ad\in\LinfpTGF$ since $\bv\nu\,\bv\ka^\ad=\rk{\bv\ups\,\bv\ka^\ad}\rk{\bv{\nu_\mro}\IG}$ and $\clo{\bv{\nu_\mro}L_+^2\rk{\cG}}=L_+^2\rk{\cG}$.
\eproof

The concept of pseudocontinuation of each other for two scalar meromorphic functions was introduced by H.~S.~Shapiro in \cite{MR0241614} (see also \cite{MR0270196}).
For operator-valued functions we consider only the case when functions belong to the Schur and Nevanlinna classes on $\D$ and $\De$, respectively.

\bdefnl{D4.42}
Let $\te\in\SFDFG$.
A function $\ka\in\NFDeFG$ is called \emph{a bounded type pseudocontinuation of $\te$ to $\De$} if $\bv\ka=\bv\te$ holds true $\bv\lambda$\nbd-almost everywhere on $\T$.
\edefn

If such a pseudocontinuation exists, then, by the Luzin--Privalov theorem, it is unique.

It follows from \rlem{L4.41} that $\te\in\SFDFG$ possesses a bounded type pseudocontinuation to $\De$ if and only if there exists a scalar inner function $\ups\in\SFDs$ such that $\bv\ups\,\bv\te^\ad\in\LinfpTGF$.

\bdefnnl{\zitaa{SN}{\cch{V}}}{D4.43}
A Schur function $\te\in\SFDFG$ is said to possess \emph{a scalar multiple $\dl\in\SFDs$} if $\dl\not\equiv0$ and there exists a Schur function $\nu\in\SFDGF$ such that
\[
\te\rk{\ze}\nu\rk{\ze}=\dl\rk{\ze}\IG,
\qquad\nu\rk{\ze}\te\rk{\ze}=\dl\rk{\ze}\IF,
\qquad\ze\in\D.
\]
\edefn

\blemnl{\cite{MR0322538}}{L4.44}
A two-sided inner function $\te\in\SFDFG$ has a bounded type pseudocontinuation to $\De$ if and only if it has a scalar multiple.
\elem

\bcorl{C4.45}
Any two-sided inner matrix-valued function $\te\in\SFDFG$ has a bounded type pseudocontinuation to $\De$.
\ecor
\bproof
Denoting $\dl\defeq\det\te\in\SFDs$ the scalar inner function and $\nu\defeq\dl\te^\inv\in\SFDGF$ the matrix-valued inner function, we obtain
\[
\te\nu=\dl\IG,
\qquad\nu\te=\dl\IF.\qedhere
\]
\eproof

On the other hand, in the infinite-dimensional case, there exist two-sided inner functions that have no bounded type pseudocontinuation to $\De$.
If $\dim\cF=\infty$, such a function is $\te\in S\rk{\D;\ek{\cF}}$ defined by the matrix
\[
\te\rk{\ze}\defeq\diag\rk{1,\ze,\ze^2,\dotsc},
\qquad\ze\in\D,
\]
with respect to some orthonormal basis of $\cF$.

The problem of the existence of a bounded type pseudocontinuation of $\te\in\SFDFG$ to $\De$ is closely interrelated with the problem of the Darlington synthesis for $\te$.
The latter was partially considered in \rsubsec{s4.6} for the regular case.

The following fundamental result in this direction is due to D.~Z.~Arov \cite{MR0428098} and, independently, R.~G.~Douglas and J.~W.~Helton \cite{MR0322538}.

\bthml{T4.46}
If a Schur operator function $\te\in\SFDFG$ possesses a bounded type pseudocontinuation to $\De$, then it has a two-sided inner extension.
\ethm

As it is easy to see from the proof of this theorem in \cite{MR0322538}, there are regular extensions among all two-sided inner ones of $\te$.

\bcorl{C4.47}
If $\te\in\SFDFG$ possesses a bounded type pseudocontinuation to $\De$, then its defect function $\vphi\in\SFD{\cF}{\cGo}$ and $\ast$\nbd-defect function $\psi\in\SFD{\cFo}{\cG}$ do as well.
\ecor
\bproof
If $\Xi\in\SFD{\scF\oplus\cF}{\scG\oplus\cG}$ is a two-sided inner extension of form \eqref{E4.41}, then the factorization $\bv{\te_{12}}^\ad\bv{\te_{12}}=\Pi^2$ holds true, where $\Pi\in CM\rk{\T;\ek{\cF}}$ is defined in \eqref{E4.12}.
From \rthmss{T4.6}{T4.8} it follows that the equality $\bv\vphi^\ad\bv\vphi=\IF-\bv\te^\ad\bv\te$ is valid.
If $\ups\in\SFDs$ is an inner function which according to \rlem{L4.41} satisfies the condition $\bv\ups\,\bv\te^\ad\in\LinfpTGF$, then $\bv\ups\,\bv\vphi^\ad\bv\vphi\in L_+^\infty\rk{\T;\ek{\cF}}$.
Since $\vphi$ is outer (see \rcor{C4.9}), we obtain $\bv\ups\,\bv\vphi^\ad\in\LinfpT{\cGo}{\cF}$.
Hence, by \rlem{L4.41}, $\vphi$ possesses a bounded type pseudocontinuation to $\De$.

For $\psi$ the statement follows from the fact that $\te$ and $\te^\sch$ have such a property or not simultaneously.
\eproof

Note that a Schur function $\Xi\in\SFD{\scF\oplus\cF}{\scG\oplus\cG}$ of form \eqref{E4.41} possesses a bounded type pseudocontinuation to $\De$ if and only if each of its entries $\te_{ij}$ ($i,j=1,2$) does.
Therefore, in the general case, the converse to \rthm{T4.46} is false since there exist two-sided inner operator functions which have no bounded type pseudocontinuation to $\De$.

Note that for a matrix-valued function $\te\in\SFDFG$ possessing a bounded type pseudocontinuation to $\De$, among its two-sided inner extensions there are matrix-valued ones (see, \teg, \cite{MR0322538} or \cite{BDFK}).
Therefore, for the matrix case, \rcor{C4.45} implies the following strengthening of \rthm{T4.46}.

\bthmnl{\cite{MR0322538}}{T4.48}
A matrix-valued Schur function $\te\in\SFDFG$ possesses a bounded type pseudocontinuation to $\De$ if and only if it has a two-sided inner matrix-valued extension.
\ethm

In the paper \cite{MR2013544}, D.~Z.~Arov generalized the concept of bounded type pseudocontinuability to $\De$ introducing \emph{the class} $V\tilde P$ ($B\tilde\Pi$ in Russian).
Here we give the definition of it in an equivalent form.

\bdefnl{D4.49}
A Schur function $\te\in\SFDFG$ is said to belong to the class $V\tilde P$ if there exist two-sided inner functions $\om\in\SFD{\cF}{\cN}$ and $\ups\in\SFD{\cM}{\cG}$ such that
\beql{E4.57}
\bv\om\,\bv\te^\ad\in\LinfpT{\cG}{\cN},
\qquad\bv\te^\ad\bv\ups\in\LinfpT{\cM}{\cF}.
\eeq
\edefn

Taking into account \rlem{L4.41}, we see that the class of Schur functions admitting a bounded type pseudocontinuation to $\De$ (called by Arov as the class $VP$) is a subclass of the class $V\tilde P$ ($\om\defeq\gamma\IF$, $\ups\defeq\gamma\IG$, $\gamma\in\SFDs$ is inner).

Conditions \eqref{E4.57} can be interpreted as the existence of two pairs $\set{\mu_\ell,\eta_r}$ and $\set{\eta_\ell,\mu_r}$ of functions with the following properties:
\begin{enuin}
    \item $\mu_\ell\in\SFDe{\cN}{\cG}$, $\mu_r\in\SFDe{\cF}{\cM}$;
    \item $\eta_r\in\SFDe{\cN}{\cF}$, $\eta_\ell\in\SFDe{\cG}{\cM}$ are two-sided inner functions;
    \item their boundary value functions satisfy the equalities
    \beql{E4.58}
    \bv{\mu_\ell}\,\bv{\eta_r}^\inv
    =\bv{\eta_\ell}^\inv\bv{\mu_r}
    =\bv\te.
    \eeq
\end{enuin}
This can be easily checked if we choose $\rho_\ell\in\SFD{\cG}{\cN}$, $\rho_r\in\SFD{\cM}{\cF}$ such that $\bv{\rho_\ell}=\bv\om\,\bv\te^\ad$, $\bv{\rho_r}=\bv\te^\ad\bv\ups$ and set $\mu_\ell\rk{\ze}\defeq\rho_\ell^\ad\rk{1/\ko\ze}$, $\eta_r\defeq\om^\ad\rk{1/\ko\ze}$, $\eta_\ell\rk{\ze}\defeq\ups^\ad\rk{1/\ko\ze}$, $\mu_r\rk{\ze}\defeq\rho_r^\ad\rk{1/\ko\ze}$, $\ze\in\De$.
In view of equalities \eqref{E4.58}, it is natural to call the function $\mu_\ell$ (resp.\ $\mu_r$) as \emph{the left} (resp.\ \emph{right}) \emph{numerator} and the function $\eta_r$ (resp.\ $\eta_\ell$) as \emph{the right} (resp.\ \emph{left}) \emph{denominator of a generalized bounded type pseudocontinuation} $\set{\mu_\ell,\eta_r}$ (resp.\ $\set{\eta_\ell,\mu_r}$) \emph{of the function $\te$ to $\De$}.

\bthmnl{\cite{MR2013544}}{T4.50}
If a Schur function $\te\in\SFDFG$ belongs to the class $V\tilde P$, then it has a two-sided inner extension.
\ethm

As in the case of \rthm{T4.46}, an analysis of the proof of \rthm{T4.50} in \cite{MR2013544} shows that there are regular extensions among all two-sided inner ones of $\te$.

In the general case, the question of the validity of the converse theorem seems to be open.
Thus, the problem of describing the class of Schur operator functions which admit a two-sided inner extension (the Darlington problem) seems to be open as well.

The Darlington realization of Schur functions for the matrix case is studied in \cite{MR0357820}.
A description of two-sided inner extensions in terms of shifts contained in the corresponding fundamental contraction for matrix-valued Schur functions is considered in \cite{BDFK}.
One more approach to a full description of all $\rk{2\times2}$\nbd-matrix-valued inner extensions of a scalar Schur function can be found in \cite{MR2130935}.








\subsection*{Acknowledgment}
The second author would like to express special thanks to Professor Tatjana Eisner for her generous support.
He also thanks the Max Planck Institute for Human Cognitive and Brain Sciences, in particular 
Professor Christian Doeller, Dr Christina Schroeder, and Dr Sebastian Ziegaus.

The second author was also partially supported by the Volkswagen Foundation grant within the frameworks of the international project ``From Modeling and Analysis to Approximation''.

\end{document}